\numberwithin{equation}{section}
\numberwithin{figure}{section}
\theoremstyle{plain}
\newtheorem{thm}{\protect\theoremname}[section]
  \theoremstyle{definition}
  \newtheorem{defn}[thm]{\protect\definitionname}
  \theoremstyle{plain}
  \newtheorem{lem}[thm]{\protect\lemmaname}
  \theoremstyle{plain}
  \newtheorem{prop}[thm]{\protect\propositionname}
  \theoremstyle{remark}
  \newtheorem*{rem*}{\protect\remarkname}
  \theoremstyle{plain}
  \newtheorem{cor}[thm]{\protect\corollaryname}
  \theoremstyle{definition}
  \newtheorem{example}[thm]{\protect\examplename}
  \providecommand{\corollaryname}{Corollary}
  \providecommand{\definitionname}{Definition}
  \providecommand{\examplename}{Example}
  \providecommand{\lemmaname}{Lemma}
  \providecommand{\propositionname}{Proposition}
  \providecommand{\remarkname}{Remark}
\providecommand{\theoremname}{Theorem}
\begin{document}

\title{harmonic analysis \\for the bi-free partial $S$-transform}

\author{hao-wei huang and jiun-chau wang}

\date{Revised: April 27, 2017}

\subjclass[2000]{46L54.}

\keywords{Bi-free partial $S$-transform; bi-free convolution; infinitely divisible
distribution.}

\address{Department of Applied Mathematics, National Sun Yat-sen University,
No. 70, Lienhai Road, Kaohsiung 80424, Taiwan, R. O. C.}

\email{hwhuang@math.nsysu.edu.tw}

\address{Department of Mathematics and Statistics, University of Saskatchewan,
106 Wiggins Road, Saskatoon, Saskatchewan S7N 5E6, Canada}

\email{jcwang@math.usask.ca}

\begin{abstract}
We develop an analytic machinery to study Voiculescu's bi-free partial
$S$-transform and then use the results to characterize the multiplicative
bi-free infinite divisibility. It is shown that the class of infinitely
divisible distributions coincides with the class of limit distributions
for products of bi-free pairs of left and right infinitesimal unitaries,
where the pairs are not required to be identically distributed but
all left variables are assumed to commute with all right variables.
Furthermore, necessary and sufficient conditions for convergence to
a given infinitely divisible distribution are found. 
\end{abstract}
\maketitle

\section{introduction}

The purpose of this paper is investigating the harmonic analysis for
the bi-free partial $S$-transform, an object that was introduced
by Voiculescu in \cite{part3} but has only been studied from the
combinatorial perspective so far (cf. \cite{Skoufranis}). 

The same task for the bi-free partial $R$-transform has been done
in our previous work \cite{HW} where the limit theory for sums of
bi-free partial $R$-transforms was established by harmonic and complex
analytic methods. This theory yields probability limit theorems for
the additive bi-free convolution of infinitesimal distributions on
$\mathbb{R}^{2}$, in which infinitely divisible measures arise naturally
as the limit distributions. In particular, these limit theorems provide
a way of constructing infinitely divisible laws in bi-free probability
theory. For example, the bi-free Gaussian measure%
\footnote{We remark that in {[}HW{]} the density formula for the bi-free Gaussian
measure contains an error; the coefficient for the $st$-term should
be $-2c(1+c^{2})$. We thank Paul Skoufranis for bringing this to
our attention.%
} was constructed through the bi-free analogue of the central limit
theorem in \cite{HW}. (See \cite{part1} for the origin of this measure,
as well as \cite{GHM} for an operator theoretical construction.)
The correspondence between limit theorems and their infinitely divisible
limits actually goes beyond the Gaussian case to every bi-freely infinitely
divisible measure, see \cite{HHW}. 

Bearing these results in mind, we start our course of investigation
by exploiting the analytic nature of the bi-free partial $S$-transform
$S_{\mu}$ of a probability measure $\mu$ on $\mathbb{T}^{2}=\left\{ (s,t)\in\mathbb{C}^{2}:|s|=1=|t|\right\} $.
We find it more convenient to do this with the function 
\[
\Sigma_{\mu}(z,w)=S_{\mu}(z/(1-z),\, w/(1-w)),
\]
called the \emph{$\Sigma$-transform}. Certainly, any result we proved
for $\Sigma_{\mu}$ can be easily translated to a statement about
$S_{\mu}$. Among all, Corollary 2.9 yields the identity 
\[
\Sigma_{\mu_{1}\boxtimes\boxtimes\mu_{2}}=\Sigma_{\mu_{1}}\Sigma_{\mu_{2}}
\]
in a circular Reinhardt domain $\Omega_{r}=\left\{ (z,w)\in\mathbb{C}^{2}:|z|,|w|\in[0,r)\cup(1/r,\infty)\right\} $,
where $r\in(0,1)$ and $\boxtimes\boxtimes$ denotes the multiplicative
bi-free convolution. When restricted to the bidisk component $D_{r}\times D_{r}=\left\{ (z,w)\in\mathbb{C}^{2}:|z|,|w|\in[0,r)\right\} $,
we recover Voiculescu's multiplicative identity for the bi-free partial
$S$-transform \cite{part3}. 

After building up analytic tools for the $\Sigma$-transform in Section
2, we proceed to study limit theorems and the infinitely divisibility
for $\boxtimes\boxtimes$ in Sections 3 and 4. The main results are
as follows. Unlike the theory of partial $R$-transform, neither $\Sigma_{\mu}$
nor $S_{\mu}$ alone can determine the underlying measure $\mu$.
Nevertheless, in Theorem 3.4 we manage to find the criteria for the
weak convergence of the bi-free convolutions 
\[
\delta_{\lambda_{n}}\boxtimes\boxtimes\mu_{n1}\boxtimes\boxtimes\mu_{n2}\boxtimes\boxtimes\cdots\boxtimes\boxtimes\mu_{nk_{n}},
\]
where $\delta_{\lambda_{n}}$ means the point mass at $\lambda_{n}\in\mathbb{T}^{2}$
and the array $\{\mu_{nk}:n\geq1,1\leq k\leq k_{n}\}$ of probability
measures on $\mathbb{T}^{2}$ is assumed to satisfy the \emph{infinitesimality}
condition 
\begin{equation}
\lim_{n\rightarrow\infty}\max_{1\leq k\leq k_{n}}\mu_{nk}(\{(s,t)\in\mathbb{T}^{2}:|s-1|+|t-1|\geq\varepsilon\})=0\label{eq:1.1}
\end{equation}
for any given $\varepsilon>0$. As to the infinite divisibility, Theorem
4.2 identifies bi-freely infinitely divisible measures as the weak
limits of infinitesimal arrays, making it possible to construct and
characterize a bi-freely infinitely divisible measure by the convergence
conditions in Theorem 3.4. Indeed, the multiplicative analogue of
the bi-free Gaussian and Poisson measures is constructed in this way,
see Examples 3.5 and 3.6. 

Last but not least, we mention that all left variables are assumed
to \emph{commute} with all right variables here, in order to accommodate
analytic objects such as measures and integral transforms.

\section{preliminaries}

\subsection{Bi-free convolution}

We first set up some notations. Denote by $\mathscr{M}_{\mathbb{T}^{2}}$
the family of all positive finite Borel measures on the distinguished
boundary $\mathbb{T}^{2}$ of the unit bidisk $\mathbb{D}^{2}=\left\{ (z,w)\in\mathbb{C}^{2}:|z|<1,|w|<1\right\} $,
and let $\mathscr{P}_{\mathbb{T}^{2}}$ be the subset of probability
measures in $\mathscr{M}_{\mathbb{T}^{2}}$. Analogously, the symbol
$\mathscr{M}_{\mathbb{T}}$ means the set of finite positive Borel
measures on the unit circle $\mathbb{T}$ and $\mathscr{P}_{\mathbb{T}}$
stands for the collection of probability measures in $\mathscr{M}_{\mathbb{T}}$. 

The above sets of measures are equipped with the weak-star topology
from duality with continuous functions on the underlying spaces. We
use the notation $\mu_{n}\Rightarrow\mu$ to indicate the convergence
of measures $\mu_{n}$ to $\mu$ in this weak topology as $n\rightarrow\infty$.
By Prohorov's theorem, any sequence of probability measures on $\mathbb{T}$
(respectively, $\mathbb{T}^{2}$) is tight and hence has a subsequence
converging weakly to a probability measure on $\mathbb{T}$ (resp.,
$\mathbb{T}^{2}$). Also, note that the weak convergence of measures
is equivalent to convergence in moments. 

Consider the $C^{*}$-probability space $B(H)$ of bounded linear
operators acting on a Hilbert space $H$, where the expectation is
given by $\varphi_{\xi}(\cdot)=\left\langle \cdot\xi,\xi\right\rangle $
for some unit vector $\xi\in H$. Given two commuting unitary operators
$u,v\in B(H)$, let $A$ be the commutative $C^{*}$-algebra generated
by the identity operator $I$, $u$, and $v$ in $B(H)$. If $M$
denotes the maximal ideal space of $A$, then the Gelfand map $\Phi$
is an isometric $*$-isomorphism from $A$ onto the $C^{*}$-algebra
$C(M)$ of continuous functions on the compact set $M$. Define a
map $F(m)=\left(\Phi(u)(m),\Phi(v)(m)\right)$ for $m\in M$ and let
$X=F(M)\subset\mathbb{\mathbb{T}}^{2}$. Then the homeomorphism $F:M\rightarrow X$
provides a complex coordinate chart on $M$, and we obtain a $*$-representation
$\pi:C(X)\rightarrow B(H)$ by $\pi(f)=\Phi^{-1}\left(f\circ F\right)$.
The representation $\pi$ gives rise to a spectral measure $E_{(u,v)}$
such that the continuous functional calculus $f(u,v)$ can be written
as 
\[
f(u,v)=\int f\, dE_{(u,v)}
\]
for any $f\in C(\mathbb{T}^{2})$. The \emph{distribution} $\mu_{(u,v)}$
of the pair $(u,v)$ is then defined as the probability measure $\varphi_{\xi}\circ E_{(u,v)}$
in $\mathscr{P}_{\mathbb{T}^{2}}$. 

More generally, the distribution for any pair of commuting unitary
variables in a $C{}^{*}$-probability space $\left(\mathcal{A},\varphi\right)$
is defined in the same way, provided that we first represent the algebra
$\mathcal{A}$ on a Hilbert space and realize the expectation $\varphi$
as a vector state through the GNS construction. 

For any $\mu\in\mathscr{M}_{\mathbb{T}^{2}}$, let $\mu^{(1)}$ and
$\mu^{(2)}$ denote respectively the push-forward of the measure $\mu$
under the continuous coordinate projections $\pi_{1}(s,t)=s$ and
$\pi_{2}(s,t)=t$, that is, $\mu^{(j)}=\mu\circ\pi_{j}^{-1}$ for
$j=1,2$. The measures $\mu^{(1)}$, $\mu^{(2)}$ are called the \emph{marginal
laws} of $\mu$ and belong to $\mathscr{M}_{\mathbb{T}}$. If $\mu=\mu_{(u,v)}$
for some commuting unitaries $u$, $v$ in a $C^{*}$-probability
space $(\mathcal{A},\varphi)$, then the marginal laws $\mu^{(1)}$
and $\mu^{(2)}$ are the distributions of the variables $u$ and $v$,
respectively. Thus, one has 
\[
\int_{\mathbb{T}}f(s)\, d\mu^{(1)}(s)=\varphi(f(u))=\int_{\mathbb{T}^{2}}f(s)\, d\mu(s,t)
\]
and 
\[
\int_{\mathbb{T}}g(t)\, d\mu^{(2)}(t)=\varphi(g(v))=\int_{\mathbb{T}^{2}}g(t)\, d\mu(s,t)
\]
for any $f,g\in C(\mathbb{T})$. 

We now follow \cite{part1} to present a construction of the (multiplicative)
bi-free convolution of two probability measures on $\mathbb{T}^{2}$.
Fix $\mu_{1},\mu_{2}\in\mathscr{P}_{\mathbb{T}^{2}}$ and consider
their $L^{2}$-Hilbert spaces $H_{i}=L^{2}(\mu_{i})$ for $i=1,2$.
Let $\xi_{i}$ be the constant function one in $H_{i}$. According
to \cite{part1}, the identification of the space $H_{i}$ as the
left (resp., the right) tensor factor of the free product Hilbert
space $(H,\xi)=(H_{1},\xi_{1})*(H_{2},\xi_{1})$ is done by a Hilbert
space isomorphism $V_{i}:H_{i}\otimes H(\ell,i)\rightarrow H$ (resp.,
$W_{i}:H(r,i)\otimes H_{i}\rightarrow H$). We refer to \cite{part1}
for the definition of the maps $V_{i}$, $W_{i}$ and that of the
spaces $H(\ell,i)$, $H(r,i)$. Here we only recall the facts that
$V_{i}(\xi_{i}\otimes\xi)=\xi=W_{i}(\xi\otimes\xi_{i})$ and $V_{i}(h_{i}\otimes\xi)=W_{i}(\xi\otimes h_{i})$
for all $h_{i}\in H_{i}$. 

For $T\in B(H_{i})$, define the \emph{left} and \emph{right} \emph{variables}
by the formulae $\lambda_{i}(T)=V_{i}(T\otimes I_{H(\ell,i)})V_{i}^{-1}$
and $\rho_{i}(T)=W_{i}(I_{H(r,i)}\otimes T)W_{i}^{-1}$ respectively.
Then $\lambda_{i},\rho_{i}:B(H_{i})\rightarrow B(H)$ are $*$-representations,
and in the sense of Voiculescu \cite{part1}, the pairs of $C^{*}$-faces
$\left(B(H_{1}),B(H_{1})\right)$ and $\left(B(H_{2}),B(H_{2})\right)$
are \emph{bi-free} in the $C^{*}$-probability space $(B(H),\varphi_{\xi})$
via the homomorphisms $\beta_{i}=\lambda_{i}$ and $\gamma_{i}=\rho_{i}$
($i=1,2$). Indeed, this follows from the definition of bi-freeness
by choosing the vector spaces $\mathcal{X}_{i}=H_{i}$, $\mathcal{X}_{i}^{\circ}=H_{i}\ominus\mathbb{C}\xi_{i}$
and the homomorphisms $l_{i}(T)=r_{i}(T)=T$ for $T\in B(H_{i})$
where $i=1,2$. In particular, this implies that for any $S_{1},T_{1}\in B(H_{1})$
and $S_{2},T_{2}\in B(H_{2})$, the two-faced pairs 
\[
\left(\lambda_{1}(S_{1}),\rho_{1}(T_{1})\right)\quad\text{and}\quad\left(\lambda_{2}(S_{2}),\rho_{2}(T_{2})\right)
\]
of left and right variables in $(B(H),\varphi_{\xi})$ are bi-free
in the $C^{*}$-setting (cf. \cite[Section 3]{part1}). 

Note that the left variables $\left\{ \lambda_{1}(S_{1}),\lambda_{2}(S_{2})\right\} $,
as well as the right variables alone, are free among themselves in
the space $(B(H),\varphi_{\xi})$.

Now, it is fairly easy to construct bi-free random vectors having
distributions $\mu_{1}$ and $\mu_{2}$. For $i=1,2$, define commuting
unitary operators $S_{i},T_{i}\in B(H_{i})$ by 
\[
S_{i}(f)(s,t)=sf(s,t)\quad\text{and}\quad T_{i}(f)(s,t)=tf(s,t),\quad(s,t)\in\mathbb{T}^{2},\; f\in H_{i},
\]
so that the distribution of $(S_{i},T_{i})$ with respect to the expectation
$\varphi_{\xi_{i}}(\cdot)=\left\langle \cdot\xi_{i},\xi_{i}\right\rangle $
is the measure $\mu_{i}$. We need to show that the pair $(\lambda_{i}(S_{i}),\rho_{i}(T_{i}))$
of left and right unitary variables in $(B(H),\varphi_{\xi})$ also
has the distribution $\mu_{i}$. To this end, first, the commutation
relation 
\begin{equation}
\lambda_{i}(S_{i})\rho_{i^{\prime}}(T_{i^{\prime}})=\rho_{i^{\prime}}(T_{i^{\prime}})\lambda_{i}(S_{i}),\quad i,i^{\prime}\in\{1,2\},\label{eq:2.1}
\end{equation}
from \cite[Section 1.5]{part1} shows that the distribution of $(\lambda_{i}(S_{i}),\rho_{i}(T_{i}))$
is well-defined as a measure in $\mathscr{P}_{\mathbb{T}^{2}}$. Secondly,
we compute the moment 
\begin{eqnarray*}
\varphi_{\xi}\left(\lambda_{i}(S_{i})^{p}\rho_{i}(T_{i})^{q}\right) & = & \left\langle \lambda_{i}(S_{i}^{p})\rho_{i}(T_{i}^{q})\xi,\xi\right\rangle \\
 & = & \left\langle W_{i}(I\otimes T_{i}^{q})(\xi\otimes\xi_{i}),V_{i}(S_{i}^{-p}\otimes I)(\xi_{i}\otimes\xi)\right\rangle \\
 & = & \left\langle W_{i}(\xi\otimes T_{i}^{q}\xi_{i}),V_{i}(S_{i}^{-p}\xi_{i}\otimes\xi)\right\rangle \\
 & = & \left\langle W_{i}(\xi\otimes T_{i}^{q}\xi_{i}),W_{i}(\xi\otimes S_{i}^{-p}\xi_{i})\right\rangle \\
 & = & \left\langle \xi\otimes T_{i}^{q}\xi_{i},\xi\otimes S_{i}^{-p}\xi_{i}\right\rangle =\varphi_{\xi_{i}}(S_{i}^{p}T_{i}^{q})=\int_{\mathbb{T}^{2}}s^{p}t^{q}\, d\mu_{i}(s,t)
\end{eqnarray*}
for any $p,q\in\mathbb{Z}$ to get the desired relation 
\[
\mu_{(\lambda_{i}(S_{i}),\rho_{i}(T_{i}))}=\mu_{i}.
\]

In summary, denoting $u_{i}=\lambda_{i}(S_{i})$ and $v_{i}=\rho_{i}(T_{i})$
for $i=1,2$, the pairs $(u_{1},v_{1})$ and $(u_{2},v_{2})$ in $(B(H),\varphi_{\xi})$
are bi-free and distributed in the right way. This leads to the following
definition. 
\begin{defn}
For any $\mu_{1},\mu_{2}\in\mathscr{P_{\mathbb{T}^{2}}}$, their \emph{bi-free
convolution} $\mu_{1}\boxtimes\boxtimes\mu_{2}$ is the distribution
of the product $(u_{1}u_{2},v_{1}v_{2})$. 
\end{defn}
Several remarks are in order. First, by (\ref{eq:2.1}), the unitary
variables $u_{1}u_{2}$ and $v_{1}v_{2}$ commute with each other,
so that the bi-free convolution $\mu_{1}\boxtimes\boxtimes\mu_{2}$
is indeed a probability measure on $\mathbb{T}^{2}$. 

Second, the measure $\mu_{1}\boxtimes\boxtimes\mu_{2}$ is completely
determined by $\mu_{1}$ and $\mu_{2}$. This is because the bi-freeness
for $(u_{1},v_{1})$ and $(u_{2},v_{2})$ implies that any joint $*$-moment
of $\{u_{1}u_{2},v_{1}v_{2}\}$ is given by a universal polynomial
relation among the joint $*$-moments of $\{u_{1},v_{1}\}$ and that
of $\{u_{2},v_{2}\}$. (By virtue of (\ref{eq:2.1}), these joint
$*$-moments can be reduced to the usual moments.) More precisely,
given $p,q\in\mathbb{Z}$ with $|p|+|q|\geq1$ and let $m$ be the
number of pairs $(p^{\prime},q^{\prime})\in\mathbb{Z}^{2}$ satisfying
$1\leq|p^{\prime}|+|q^{\prime}|\leq|p|+|q|$, Proposition 2.18 and
Lemma 5.2 from \cite{part1} imply that there exists a universal polynomial
$P_{p,q}\in\mathbb{Z}[X_{1},X_{2},\cdots,X_{m},Y_{1},Y_{2},\cdots,Y_{m}]$
such that the corresponding moment $\varphi_{\xi}((u_{1}u_{2})^{p}(v_{1}v_{2})^{q})$
is equal to the value of the polynomial $P_{p,q}$ at $X_{i}=\varphi_{\xi}(u_{1}^{p_{i}}v_{1}^{q_{i}})$
and $Y_{i}=\varphi_{\xi}(u_{2}^{p_{i}}v_{2}^{q_{i}})$, where $1\leq|p_{i}|+|q_{i}|\leq|p|+|q|$
for $1\leq i\leq m$. 

Another consequence of this universality is that the commutative,
associative binary operation $\boxtimes\boxtimes$ on the set $\mathscr{P}_{\mathbb{T}^{2}}$
is jointly weak-star continuous in the sense that $\mu_{n}\boxtimes\boxtimes\nu_{n}\Rightarrow\mu\boxtimes\boxtimes\nu$
in $\mathscr{P}_{\mathbb{T}^{2}}$ whenever $\mu_{n}\Rightarrow\mu$,
$\nu_{n}\Rightarrow\nu$ in $\mathscr{P}_{\mathbb{T}^{2}}$.

Here we mention a special case of the polynomial $P_{p,q}$ when all
involved random variables have expectation zero. The result below
is Lemma 2.1 from \cite{part2}.
\begin{lem}
Let $\mathcal{A}_{i}=C^{*}(u_{i})$ and $\mathcal{B}_{i}=C^{*}(v_{i})$
be the $C^{*}$-subalgebras generated by $u_{i}$ and $v_{i}$ in
$(B(H),\varphi_{\xi})$, respectively. Let $1\leq k\leq m$, $1\leq l\leq n$,
$\alpha(k),\beta(l)\in\{1,2\}$, $a_{k}\in\mathcal{A}_{\alpha(k)}$,
$b_{l}\in\mathcal{B}_{\beta(l)}$ be such that $\varphi_{\xi}(a_{k})=0=\varphi_{\xi}(b_{l})$
and $\alpha(1)\neq\alpha(2)\neq\cdots\neq\alpha(m)$, $\beta(1)\neq\beta(2)\neq\cdots\neq\beta(n)$.
Then we have 
\[
\varphi_{\xi}(a_{m}a_{m-1}\cdots a_{1}b_{n}b_{n-1}\cdots b_{1})=\delta_{m,n}\prod_{1\leq k\leq\min\{m,n\}}\delta_{\alpha(k),\beta(k)}\,\varphi_{\xi}(a_{k}b_{k}).
\]

\end{lem}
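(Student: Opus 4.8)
The plan is to prove the identity by a direct computation in the free-product Hilbert space $(H,\xi)=(H_{1},\xi_{1})*(H_{2},\xi_{1})$, using the explicit creation-type actions of the left and right representations $\lambda_{i},\rho_{i}$. First I would reduce to concrete operators on the word spaces. Since $\lambda_{i},\rho_{i}$ are unital $*$-representations carrying $S_{i},T_{i}$ to $u_{i},v_{i}$, every $a_{k}\in\mathcal{A}_{\alpha(k)}=C^{*}(u_{\alpha(k)})$ has the form $a_{k}=\lambda_{\alpha(k)}(\hat{a}_{k})$ and every $b_{l}\in\mathcal{B}_{\beta(l)}=C^{*}(v_{\beta(l)})$ the form $b_{l}=\rho_{\beta(l)}(\hat{b}_{l})$ for suitable $\hat{a}_{k}\in C^{*}(S_{\alpha(k)})$, $\hat{b}_{l}\in C^{*}(T_{\beta(l)})$. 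Using $\xi=V_{i}(\xi_{i}\otimes\xi)=W_{i}(\xi\otimes\xi_{i})$ one checks $\varphi_{\xi}(\lambda_{i}(T))=\langle T\xi_{i},\xi_{i}\rangle=\varphi_{\xi_{i}}(T)$, and similarly for $\rho_{i}$, so the hypotheses $\varphi_{\xi}(a_{k})=0=\varphi_{\xi}(b_{l})$ translate into $\langle\hat{a}_{k}\xi_{\alpha(k)},\xi_{\alpha(k)}\rangle=0$ and $\langle\hat{b}_{l}\xi_{\beta(l)},\xi_{\beta(l)}\rangle=0$; that is, $\hat{a}_{k},\hat{b}_{l}$ are centered.

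Next I would rewrite the target moment as an inner product and transfer the left factors to the other side of the pairing,
\[
\varphi_{\xi}(a_{m}\cdots a_{1}b_{n}\cdots b_{1})=\big\langle\,\rho_{\beta(n)}(\hat{b}_{n})\cdots\rho_{\beta(1)}(\hat{b}_{1})\xi,\ \lambda_{\alpha(1)}(\hat{a}_{1}^{*})\cdots\lambda_{\alpha(m)}(\hat{a}_{m}^{*})\xi\,\big\rangle,
\]
using $(a_{m}\cdots a_{1})^{*}=\lambda_{\alpha(1)}(\hat{a}_{1}^{*})\cdots\lambda_{\alpha(m)}(\hat{a}_{m}^{*})$. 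Then I would compute each vector explicitly. Writing $H=\mathbb{C}\xi\oplus\bigoplus_{p\ge1}\bigoplus_{i_{1}\ne\cdots\ne i_{p}}H_{i_{1}}^{\circ}\otimes\cdots\otimes H_{i_{p}}^{\circ}$ with $H_{i}^{\circ}=H_{i}\ominus\mathbb{C}\xi_{i}$, the representation $\lambda_{i}$ (through $V_{i}$) acts on the leftmost tensor leg and $\rho_{i}$ (through $W_{i}$) on the rightmost leg. I claim that, by induction on the number of factors,
\[
\rho_{\beta(n)}(\hat{b}_{n})\cdots\rho_{\beta(1)}(\hat{b}_{1})\xi=\zeta_{1}\otimes\cdots\otimes\zeta_{n},\qquad\zeta_{l}=P^{\circ}_{\beta(l)}\hat{b}_{l}\,\xi_{\beta(l)}\in H^{\circ}_{\beta(l)},
\]
and symmetrically $\lambda_{\alpha(1)}(\hat{a}_{1}^{*})\cdots\lambda_{\alpha(m)}(\hat{a}_{m}^{*})\xi=\eta_{1}\otimes\cdots\otimes\eta_{m}$ with $\eta_{k}=P^{\circ}_{\alpha(k)}\hat{a}_{k}^{*}\,\xi_{\alpha(k)}\in H^{\circ}_{\alpha(k)}$, where $P_{i}^{\circ}$ is the orthogonal projection onto $H_{i}^{\circ}$. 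The inductive step is where the two hypotheses do their work: the centering $\langle\hat{b}_{l}\xi_{\beta(l)},\xi_{\beta(l)}\rangle=0$ guarantees that applying $\rho_{\beta(l)}(\hat{b}_{l})$ kills the scalar ($\xi$-)component and strictly lengthens the current word by one letter, while the alternation $\beta(l)\ne\beta(l-1)$ guarantees that the newly created letter lies in a different leg than the adjacent one, so that no collision (hence no lower-order correction term) occurs and the result remains a simple tensor. The same two facts drive the $\lambda$-computation on the left.

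Finally I would read off the answer from orthogonality of the word decomposition. The inner product of $\zeta_{1}\otimes\cdots\otimes\zeta_{n}$ with $\eta_{1}\otimes\cdots\otimes\eta_{m}$ vanishes unless the two words have the same length and letter pattern, i.e. unless $m=n$ and $\alpha(k)=\beta(k)$ for every $k$, in which case it equals $\prod_{k=1}^{n}\langle\zeta_{k},\eta_{k}\rangle$. A one-letter version of the same computation gives $\langle\zeta_{k},\eta_{k}\rangle=\langle\rho_{\alpha(k)}(\hat{b}_{k})\xi,\lambda_{\alpha(k)}(\hat{a}_{k}^{*})\xi\rangle=\varphi_{\xi}(a_{k}b_{k})$ whenever $\alpha(k)=\beta(k)$, so the product becomes $\prod_{k}\varphi_{\xi}(a_{k}b_{k})$, which is exactly $\delta_{m,n}\prod_{1\le k\le\min\{m,n\}}\delta_{\alpha(k),\beta(k)}\,\varphi_{\xi}(a_{k}b_{k})$. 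I expect the main obstacle to be the inductive word-building step: one must verify carefully that the centering and strict alternation together prevent any shorter-word correction terms from appearing, since it is precisely the absence of these terms that forces the clean product formula and the Kronecker deltas.
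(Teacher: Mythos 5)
Your argument is correct. Note first that the paper does not actually prove this lemma: it is quoted verbatim as Lemma~2.1 of Voiculescu's paper \cite{part2}, so there is no internal proof to compare against. What you have written is essentially the standard derivation in the concrete model that Section~2.1 of the paper sets up, and all the steps check out: since $u_{i}=\lambda_{i}(S_{i})$ and $v_{i}=\rho_{i}(T_{i})$ with $\lambda_{i},\rho_{i}$ unital $*$-homomorphisms, every $a_{k}$ and $b_{l}$ does lift to a centered element of $C^{*}(S_{\alpha(k)})$ resp.\ $C^{*}(T_{\beta(l)})$ (the image of a $C^{*}$-algebra under a $*$-homomorphism is closed, and $\varphi_{\xi}\circ\lambda_{i}=\varphi_{\xi_{i}}=\varphi_{\xi}\circ\rho_{i}$ on the relevant algebras); moving the left factors across the inner product is legitimate because $(a_{m}\cdots a_{1})^{*}=a_{1}^{*}\cdots a_{m}^{*}$; and the inductive word-building step works exactly as you describe, with centering killing the $\xi$-component of $\hat{b}_{l}\xi_{\beta(l)}$ and the strict alternation $\beta(l)\neq\beta(l-1)$ ensuring that $W_{\beta(l)}^{-1}$ merely appends a fresh $\xi_{\beta(l)}$ leg rather than acting on the last letter of the existing word (which is where shorter-word corrections would otherwise arise). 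The final orthogonality of words of different length or different leg pattern, and the identification $\langle\zeta_{k},\eta_{k}\rangle=\langle\hat{a}_{k}\hat{b}_{k}\xi_{i},\xi_{i}\rangle=\varphi_{\xi}(a_{k}b_{k})$ when $\alpha(k)=\beta(k)$, give precisely the stated Kronecker-delta product. One small presentational point: it is worth saying explicitly that you treat the right word and the left word on opposite sides of the pairing precisely so that no hypothesis relating $\alpha(1)$ to $\beta(1)$ is needed; had you applied the $a_{k}$'s directly to the vector $\zeta_{1}\otimes\cdots\otimes\zeta_{n}$, a collision at the junction would have had to be handled separately.
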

Denote by 
\[
\text{m}=\frac{d\theta}{2\pi}\otimes\frac{d\theta}{2\pi}
\]
the uniform distribution on $\mathbb{T}^{2}$, the next result follows
from Lemma 2.2. 
\begin{prop}
Let $\mu_{i}=\mu_{(u_{i},v_{i})}$ and assume that $\varphi_{\xi}(u_{i})=0=\varphi_{\xi}(v_{i})$
for $i=1,2$. Then we have 
\[
\mu_{1}\boxtimes\boxtimes\mu_{2}=\text{\emph{m}}
\]
if and only if one of the moments $\varphi_{\xi}(u_{1}v_{1})$, $\varphi_{\xi}(u_{2}v_{2})$
is zero. In particular, we always have $\mu_{(u,v)}\boxtimes\boxtimes\emph{m}=\emph{m}$
as long as $\varphi_{\xi}(u)=0=\varphi_{\xi}(v)$.\end{prop}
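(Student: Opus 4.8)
The plan is to verify the equality of measures through their moments. A probability measure $\nu$ on $\mathbb{T}^{2}$ is determined by the doubly-indexed sequence $\int_{\mathbb{T}^{2}}s^{p}t^{q}\,d\nu(s,t)$, $(p,q)\in\mathbb{Z}^{2}$, and the uniform measure $\mathrm{m}$ is singled out among them by the property that all these integrals equal $\delta_{p,0}\delta_{q,0}$. Hence $\mu_{1}\boxtimes\boxtimes\mu_{2}=\mathrm{m}$ is equivalent to the vanishing of
\[
M_{p,q}:=\varphi_{\xi}\big((u_{1}u_{2})^{p}(v_{1}v_{2})^{q}\big)
\]
for every $(p,q)\neq(0,0)$, and the whole proposition reduces to computing $M_{p,q}$ explicitly via Lemma 2.2. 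First I would dispose of the pure moments: if $q=0$ and $p\neq0$, then $(u_{1}u_{2})^{p}$ is an alternating word in the centered elements $u_{1},u_{2}$ (or in $u_{1}^{-1},u_{2}^{-1}$ when $p<0$), so $M_{p,0}=0$ by the freeness of the left variables, and symmetrically $M_{0,q}=0$.

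For the mixed moments $p,q\neq0$ I would write $(u_{1}u_{2})^{p}$ and $(v_{1}v_{2})^{q}$ as alternating products of the centered unitaries $u_{i}^{\pm1}$ and $v_{i}^{\pm1}$, so that the expression takes the form $a_{2|p|}\cdots a_{1}b_{2|q|}\cdots b_{1}$ required by Lemma 2.2. The factor $\delta_{2|p|,2|q|}$ then immediately forces $|p|=|q|$, so only the diagonal moments can survive. When $|p|=|q|$, the surviving value is $\prod_{k}\delta_{\alpha(k),\beta(k)}\,\varphi_{\xi}(a_{k}b_{k})$, and here the bookkeeping of the index patterns $\alpha(k),\beta(k)$ is the crux. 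When $p$ and $q$ have the same sign, the two alternating patterns coincide, every $\delta_{\alpha(k),\beta(k)}=1$, and the product reduces to $\big[\varphi_{\xi}(u_{1}v_{1})\varphi_{\xi}(u_{2}v_{2})\big]^{p}$ for $p>0$ (and to its complex conjugate for $p<0$, using $\varphi_{\xi}(u_{i}^{-1}v_{i}^{-1})=\overline{\varphi_{\xi}(u_{i}v_{i})}$, which holds since $u_{i},v_{i}$ commute and are unitary). When $p$ and $q$ have opposite signs, the patterns are everywhere out of phase, so $\delta_{\alpha(k),\beta(k)}=0$ for all $k$ and the moment vanishes outright.

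The upshot is that the only possibly nonzero moments are the diagonal ones $M_{p,p}=\big[\varphi_{\xi}(u_{1}v_{1})\varphi_{\xi}(u_{2}v_{2})\big]^{p}$ (and their conjugates for $p<0$), so that $\mu_{1}\boxtimes\boxtimes\mu_{2}$ is completely governed by the single scalar $c=\varphi_{\xi}(u_{1}v_{1})\varphi_{\xi}(u_{2}v_{2})$. Both directions of the equivalence then fall out at once. If one of $\varphi_{\xi}(u_{1}v_{1}),\varphi_{\xi}(u_{2}v_{2})$ is zero then $c=0$, every $M_{p,q}$ with $(p,q)\neq(0,0)$ vanishes, and $\mu_{1}\boxtimes\boxtimes\mu_{2}=\mathrm{m}$; conversely, if both are nonzero then $M_{1,1}=c\neq0$, while the $(1,1)$-moment of $\mathrm{m}$ is zero, so the two measures differ. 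The ``in particular'' assertion is simply the special case $\mu_{2}=\mathrm{m}$, for which $\varphi_{\xi}(u_{2}v_{2})=\int_{\mathbb{T}^{2}}st\,d\mathrm{m}(s,t)=0$ while the hypotheses give $\varphi_{\xi}(u)=\varphi_{\xi}(v)=0$.

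I expect the only delicate point to be the sign/phase bookkeeping for the negative powers in the mixed case: one must align the alternating patterns $\alpha(k),\beta(k)$ carefully so that same-sign exponents produce matching indices (and hence a genuine product of $\varphi_{\xi}(u_{i}v_{i})$'s) while opposite-sign exponents produce a total mismatch (and hence an automatic zero). Everything else is a routine application of Lemma 2.2 together with the freeness of the left and of the right variables.
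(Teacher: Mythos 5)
Your proposal is correct and follows essentially the same route as the paper: reduce the claim to the vanishing of the moments $\varphi_{\xi}((u_{1}u_{2})^{p}(v_{1}v_{2})^{q})$, dispose of the pure moments by freeness of the left (resp.\ right) variables, and use Lemma 2.2 to force $|p|=|q|$, kill the opposite-sign cases, and evaluate the same-sign cases as $[\varphi_{\xi}(u_{1}v_{1})\varphi_{\xi}(u_{2}v_{2})]^{p}$ (or its conjugate). The index bookkeeping you flag as the delicate point is handled in the paper exactly as you describe, so no gap remains.
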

\begin{proof}
We need to show that $\varphi_{\xi}((u_{1}u_{2})^{p}(v_{1}v_{2})^{q})=0$
holds for any given $p,q\in\mathbb{Z}$, $|p|+|q|\geq1$, if and only
if $\varphi_{\xi}(u_{1}v_{1})=0$ or $\varphi_{\xi}(u_{2}v_{2})=0$.
To this purpose, consider two such integers $p$ and $q$. First,
if $p$ or $q$ happens to be zero, for example, say $q=0$, then
the freeness of $\{u_{1},u_{2}\}$ implies $\varphi_{\xi}((u_{1}u_{2})^{p})=0$
without any extra conditions. So, we may and do assume that $|p|+|q|\geq2$.
Then, depending on the signs of $p$ and $q$, we have 
\[
\varphi_{\xi}((u_{1}u_{2})^{p}(v_{1}v_{2})^{q})=\begin{cases}
\varphi_{\xi}(u_{1}u_{2}\cdots u_{1}u_{2}v_{1}v_{2}\cdots v_{1}v_{2}), & p,q\geq1;\\
\varphi_{\xi}(u_{1}u_{2}\cdots u_{1}u_{2}v_{2}^{*}v_{1}^{*}\cdots v_{2}^{*}v_{1}^{*}), & p\geq1,q\leq-1;\\
\varphi_{\xi}(u_{2}^{*}u_{1}^{*}\cdots u_{2}^{*}u_{1}^{*}v_{1}v_{2}\cdots v_{1}v_{2}), & p\leq-1,q\geq1;\\
\varphi_{\xi}(u_{2}^{*}u_{1}^{*}\cdots u_{2}^{*}u_{1}^{*}v_{2}^{*}v_{1}^{*}\cdots v_{2}^{*}v_{1}^{*}), & p,q\leq-1,
\end{cases}
\]
where the number of the left variables $u_{i}$'s is $2|p|$ and that
of the right variables $v_{i}$'s is $2|q|$. Lemma 2.2 shows that
we can exclude the case $|p|\neq|q|$. Moreover, even when $|p|=|q|$,
the second and the third types of $*$-moments are always zero. Thus,
by Lemma 2.2 again, we end up with the next two cases after pairing
the left and right variables: 
\[
\varphi_{\xi}((u_{1}u_{2})^{p}(v_{1}v_{2})^{q})=\begin{cases}
\varphi_{\xi}(u_{1}v_{1})^{p}\varphi_{\xi}(u_{2}v_{2})^{q}, & p=q\geq1;\\
\left[\overline{\varphi_{\xi}(u_{1}v_{1})}\right]^{|p|}\left[\overline{\varphi_{\xi}(u_{2}v_{2})}\right]^{|q|}, & p=q\leq-1.
\end{cases}
\]
The desired result now follows immediately.
\end{proof}
For the purpose of this paper, we introduce another convolution operation
on $\mathscr{P}_{\mathbb{T}^{2}}$. 
\begin{defn}
The (right) \emph{opposite bi-free convolution} $\mu_{1}\boxtimes\boxtimes^{\text{op}}\mu_{2}$
of $\mu_{1}$ and $\mu_{2}$ is a probability measure on $\mathbb{T}^{2}$
defined by 
\[
\mu_{1}\boxtimes\boxtimes^{\text{op}}\mu_{2}=\mu_{(u_{1}u_{2},v_{2}v_{1})}.
\]

\end{defn}
It is easy to see that $\mu_{1}\boxtimes\boxtimes\mu_{2}\neq\mu_{1}\boxtimes\boxtimes^{\text{op}}\mu_{2}$,
for example, through Lemma 2.2. Nevertheless, if we introduce the
(right) \emph{coordinate-reflection} $\mu^{*}$ of a measure $\mu\in\mathscr{P}_{\mathbb{T}^{2}}$
by 
\[
d\mu^{*}(s,t)=d\mu(s,1/t),
\]
then we do have 
\begin{eqnarray*}
(\mu_{1}\boxtimes\boxtimes\mu_{2})^{*} & = & \mu_{(u_{1}u_{2},(v_{1}v_{2})^{*})}\\
 & = & \mu_{(u_{1},v_{1}^{*})}\boxtimes\boxtimes^{\text{op}}\mu_{(u_{2},v_{2}^{*})}\\
 & = & \mu_{1}^{*}\boxtimes\boxtimes^{\text{op}}\mu_{2}^{*}.
\end{eqnarray*}

Finally, from the construction of the convolutions $\boxtimes\boxtimes$
and $\boxtimes\boxtimes^{\text{op}}$, we know that 
\[
(\mu_{1}\boxtimes\boxtimes\mu_{2})^{(j)}=\mu_{1}^{(j)}\boxtimes\mu_{2}^{(j)},\qquad j=1,2,
\]
and 
\[
(\mu_{1}\boxtimes\boxtimes^{\text{op}}\mu_{2})^{(1)}=\mu_{1}^{(1)}\boxtimes\mu_{2}^{(1)},\quad(\mu_{1}\boxtimes\boxtimes^{\text{op}}\mu_{2})^{(2)}=\mu_{2}^{(2)}\boxtimes\mu_{1}^{(2)}=\mu_{1}^{(2)}\boxtimes\mu_{2}^{(2)}.
\]

\subsection{The $\psi$- and $\eta$-transforms}

The \emph{$\psi$-transform} $\psi_{\mu}$ for $\mu\in\mathscr{P}_{\mathbb{T}^{2}}$
is defined by the integral 
\[
\psi_{\mu}(z,w)=\int_{\mathbb{T}^{2}}\frac{zs}{1-zs}\frac{wt}{1-wt}\, d\mu(s,t)
\]
for $(z,w)$ in the product set $(\mathbb{C}\setminus\mathbb{T})^{2}=(\mathbb{C}\setminus\mathbb{T})\times(\mathbb{C}\setminus\mathbb{T})=\left\{ (z,w)\in\mathbb{C}^{2}:|z|\neq1\neq|w|\right\} $.
The \emph{one-dimensional (1-D) $\psi$-transform} $\psi_{\mu^{(j)}}$
for the marginal law $\mu^{(j)}$ or, more generally, for any probability
measure on $\mathbb{T}$, is defined as 
\[
\psi_{\mu^{(j)}}(z)=\int_{\mathbb{T}}\frac{zx}{1-zx}\, d\mu^{(j)}(x),\qquad z\in\mathbb{C}\setminus\mathbb{T}.
\]
The integral transforms $\psi_{\mu}$ and $\psi_{\mu^{(j)}}$ are
holomorphic and satisfy the symmetries:
\[
\begin{cases}
\psi_{\mu^{(j)}}(z)+1=-\overline{\psi_{\mu^{(j)}}(1/\overline{z})}, & z\in\mathbb{C}\setminus\mathbb{T},\; j=1,2;\\
\psi_{\mu}(z,w)+\psi_{\mu^{(1)}}(z)+\psi_{\mu^{(2)}}(w)+1=\overline{\psi_{\mu}(1/\overline{z},1/\overline{w})}, & (z,w)\in(\mathbb{C}\setminus\mathbb{T})^{2}.
\end{cases}
\]
We also note the following limits: $\psi_{\mu^{(j)}}(\infty)=\lim_{|z|\rightarrow\infty}\psi_{\mu^{(j)}}(z)=-1$,
\[
\psi_{\mu}(z,\infty)=\lim_{|w|\rightarrow\infty}\psi_{\mu}(z,w)=-\psi_{\mu^{(1)}}(z),\quad\psi_{\mu}(\infty,w)=\lim_{|z|\rightarrow\infty}\psi_{\mu}(z,w)=-\psi_{\mu^{(2)}}(w),
\]
and $\psi_{\mu}(\infty,\infty)=\lim_{|z|,\,|w|\rightarrow\infty}\psi_{\mu}(z,w)=1$. 

From the spectral theorem, if $\mu=\mu_{(u,v)}$ for some commuting
unitaries $u,v$ in a $C^{*}$-probability space $(\mathcal{A},\varphi)$
then the $\psi$-transforms are merely the expectation of resolvent
type functions in $u$ and $v$; for examples, $\psi_{\mu}(z,w)=zw\varphi(uv[1-zu]^{-1}[1-wv]^{-1})$
or $\psi_{\mu^{(1)}}(z)=z\varphi(u[1-zu]^{-1})$. For this reason,
in this paper we sometimes use the variables $u,v$ to label the relevant
transforms instead of using $\mu$ or $\mu^{(j)}$. 

The $\psi$-transforms determine the underlying measures. For a marginal
law $\mu^{(j)}$, this can be seen from the fact that the real part
of 
\[
2\psi_{\mu^{(j)}}(z)+1=\int_{\mathbb{T}}\frac{1+zx}{1-zx}\, d\mu^{(j)}(x),\qquad z\in\mathbb{D},
\]
is the Poisson integral of the measure $d\mu^{(j)}(1/x)$. On the
other hand, observe that 
\[
g(z,w)=4\psi_{\mu}(z,w)+2[\psi_{\mu^{(1)}}(z)+\psi_{\mu^{(2)}}(w)]+1=\int_{\mathbb{T}^{2}}\frac{1+zs}{1-zs}\frac{1+wt}{1-wt}\, d\mu(s,t)
\]
for $(z,w)\in(\mathbb{C}\setminus\mathbb{T})^{2}$ so that the formula
\begin{equation}
\Re\left[\frac{g(z,w)-g(z,1/\overline{w})}{2}\right]=\int_{\mathbb{T}^{2}}\Re\left[\frac{1+zs}{1-zs}\right]\Re\left[\frac{1+wt}{1-wt}\right]\, d\mu(s,t),\qquad(z,w)\in\mathbb{D}^{2},\label{eq:2.2}
\end{equation}
recovers the values of the Poisson integral of the measure $d\mu(1/s,1/t)$,
determining the measure $\mu$ itself (cf. \cite{Rudin}).

Fix $\nu\in\mathscr{P}_{\mathbb{T}}$ and consider its 1-D $\psi$-transform
$\psi_{\nu}$. The \emph{$\eta$-transform} 
\[
\eta_{\nu}(z)=\frac{\psi_{\nu}(z)}{1+\psi_{\nu}(z)},\qquad z\in\mathbb{C}\setminus\mathbb{T},
\]
of $\nu$ is a holomorphic function satisfying $\eta_{\nu}(0)=0$
and $|\eta_{\nu}(z)|\leq|z|$ for $z\in\mathbb{D}$. The symmetry
of $\psi_{\nu}$ shows that $\eta_{\nu}$ also satisfies the Schwarz
type reflection formula
\begin{equation}
\eta_{\nu}(z)=\overline{1/\eta_{\nu}(1/\overline{z})},\qquad z\in\mathbb{C}\setminus\mathbb{T},\label{eq:2.3}
\end{equation}
and hence $\eta_{\nu}(\infty)=\infty$ and $|\eta_{\nu}(z)|\geq|z|$
for $\left|z\right|>1$. 

We introduce two appropriate classes of measures for free harmonic
analysis. The set $\mathscr{P}_{\mathbb{T}}^{\times}$ is the collection
of measures $\mu\in\mathscr{P}_{\mathbb{T}}$ such that the mean 
\[
m(\mu)=\int_{\mathbb{T}}x\, d\mu(x)\neq0.
\]
Next, the set $\mathscr{P}_{\mathbb{T}^{2}}^{\times}$ consists of
all probability measures $\mu\in\mathscr{P}_{\mathbb{T}^{2}}$ such
that $\mu^{(1)},\mu^{(2)}\in\mathscr{P}_{\mathbb{T}}^{\times}$ and
the moment 
\[
m_{1,1}(\mu)=\int_{\mathbb{T}^{2}}st\, d\mu(s,t)\neq0.
\]

The value of the derivative $\eta_{\nu}^{\prime}(0)$ is equal to
the mean $m(\nu)$. If $\nu\in\mathscr{P}_{\mathbb{T}}^{\times}$,
the map $\eta_{\nu}$ will be conformal near the origin, as well as
nearby the point of infinity by the reflection symmetry (\ref{eq:2.3}).
Thus, there exists a small radius $r=r(\nu)\in(0,1)$ such that the
inverse function $\eta_{\nu}^{-1}$ of $\eta_{\nu}$ is defined in
the open set $D_{r}\cup\Delta_{r}$, where $D_{r}=\left\{ z\in\mathbb{C}:|z|<r\right\} $
and $\Delta_{r}=\left\{ z\in\mathbb{C}:|z|>1/r\right\} $. The function
$\eta_{\nu}^{-1}$ is holomorphic in its domain of definition $D_{r}\cup\Delta_{r}$,
having a simple zero at $z=0$ and a simple pole at $z=\infty$, and
it maps the disk $D_{r}$ into $\mathbb{D}$ and the set $\Delta_{r}$
into $\Delta_{1}=\left\{ z\in\mathbb{C}:|z|>1\right\} $. In addition,
we have the symmetry 
\begin{equation}
\eta_{\nu}^{-1}(z)=\overline{1/\eta_{\nu}^{-1}(1/\overline{z})},\qquad z\in D_{r}\cup\Delta_{r}.\label{eq:2.4}
\end{equation}
Finally, the map $\eta_{\nu}^{-1}$ determines the measure $\nu$
uniquely, as one can recover the transform $\psi_{\nu}$ from $\eta_{\nu}^{-1}$
by inverting it. Also, we notice that $\psi_{\nu}$ is also invertible
near the origin and the point of infinity, with $\psi_{\nu}^{-1}(z)=\eta_{\nu}^{-1}(z/(1+z))$.

The inverse $\eta$-transform plays a role in free probability. Namely,
Voiculescu proved in \cite{VoiMulFree} that for any measures $\nu_{1},\nu_{2}\in\mathscr{P}_{\mathbb{T}}^{\times}$,
their (multiplicative) free convolution $\nu_{1}\boxtimes\nu_{2}$
also belongs to the class $\mathscr{P}_{\mathbb{T}}^{\times}$ and
the identity 
\[
z\eta_{\nu_{1}\boxtimes\nu_{2}}^{-1}(z)=\eta_{\nu_{1}}^{-1}(z)\eta_{\nu_{2}}^{-1}(z)
\]
holds in any common domain of definition for the three inverse $\eta$-transforms.

Given a measure $\mu\in\mathscr{P}_{\mathbb{T}^{2}}$, we also consider
the following integral transform
\[
H_{\mu}(z,w)=\int_{\mathbb{T}^{2}}\frac{1}{(1-zs)(1-wt)}\, d\mu(s,t),\qquad(z,w)\in(\mathbb{C}\setminus\mathbb{T})^{2},
\]
which is a holomorphic map satisfying the relationship 
\begin{equation}
H_{\mu}(z,w)=\psi_{\mu}(z,w)+\psi_{\mu^{(1)}}(z)+\psi_{\mu^{(2)}}(w)+1.\label{eq:2.5}
\end{equation}

\subsection{Bi-free partial $S$- and $\Sigma$-transforms}

Let $\mu$ be a measure in $\mathscr{P}_{\mathbb{T}^{2}}^{\times}$.
Recall from \cite{part3} that the bi-free partial $S$-transform
of $\mu$ is 
\[
S_{\mu}(z,w)=\frac{1+z}{z}\frac{1+w}{w}\left[1-\frac{1+z+w}{H_{\mu}\left(\psi_{\mu^{(1)}}^{-1}(z),\psi_{\mu^{(2)}}^{-1}(w)\right)}\right].
\]
We introduce the following function, called the \emph{bi-free partial
$\Sigma$-transform} (or, just $\Sigma$-transform for short), via
a change of variables: 
\[
\Sigma_{\mu}(z,w)=S_{\mu}\left(\frac{z}{1-z},\frac{w}{1-w}\right)=\frac{\psi_{\mu}\left(\eta_{\mu^{(1)}}^{-1}(z),\eta_{\mu^{(2)}}^{-1}(w)\right)}{zwH_{\mu}\left(\eta_{\mu^{(1)}}^{-1}(z),\eta_{\mu^{(2)}}^{-1}(w)\right)}.
\]

The domain of definition for $\Sigma_{\mu}$ is set to be an open
Reinhardt domain 
\[
\Omega_{r}=(D_{r}\cup\Delta_{r})\times(D_{r}\cup\Delta_{r}),
\]
where the number $r\in(0,1)$ is chosen small enough so that at least
the two inverses $\eta_{\mu^{(j)}}^{-1}$ ($j=1,2$) are defined in
$D_{r}\cup\Delta_{r}$. Furthermore, shrinking the number $r$ if
necessary, we can find a set $\Omega_{r}$ so that $\Sigma_{\mu}$
is a holomorphic function in $\Omega_{r}$. Indeed, for $(z,w)$ in
the bidisk connected component $D_{r}\times D_{r}$ of $\Omega_{r}$,
we may view the function $\Sigma_{\mu}$ as a quotient $f/g$ of holomorphic
maps, where 
\[
f(z,w)=\frac{\eta_{\mu^{(1)}}^{-1}(z)}{z}\frac{\eta_{\mu^{(2)}}^{-1}(w)}{w}\int_{\mathbb{T}^{2}}\frac{st}{(1-\eta_{\mu^{(1)}}^{-1}(z)s)(1-\eta_{\mu^{(2)}}^{-1}(w)t)}\, d\mu(s,t)
\]
and $g(z,w)=H_{\mu}\left(\eta_{\mu^{(1)}}^{-1}(z),\eta_{\mu^{(2)}}^{-1}(w)\right)$.
Since $\lim_{(z,w)\rightarrow(0,0)}g(z,w)=1$, there exists a small
number $r$ such that the denominator $g$ is zero-free in the corresponding
bidisk $D_{r}\times D_{r}$ and hence $\Sigma_{\mu}$ is well-defined
and holomorphic in that bidisk. As for $(z,w)$ in the unbounded component
$D_{r}\times\Delta_{r}$, we again write $\Sigma_{\mu}=F/G$ as a
quotient of holomorphic maps, only this time the denominator 
\begin{eqnarray*}
G(z,w) & = & wH_{\mu}\left(\eta_{\mu^{(1)}}^{-1}(z),\eta_{\mu^{(2)}}^{-1}(w)\right)\\
 & = & w\overline{\eta_{\mu^{(2)}}^{-1}(1/\overline{w})}\cdot\int_{\mathbb{T}^{2}}\frac{1}{(1-\eta_{\mu^{(1)}}^{-1}(z)s)(\overline{\eta_{\mu^{(2)}}^{-1}(1/\overline{w})}-t)}\, d\mu(s,t)\\
 &  & \rightarrow\overline{1/m(\mu^{(2)})}\cdot-\overline{m(\mu^{(2)})}=-1
\end{eqnarray*}
as $z\rightarrow0$ and $|w|\rightarrow\infty$. (Here the assumption
$m(\mu^{(2)})\neq0$ and the symmetry (\ref{eq:2.4}) are used to
evaluate the last limit.) So, the existence of the radius $r$ is
also guaranteed in this case. One can treat the other two components
$\Delta_{r}\times D_{r}$ and $\Delta_{r}\times\Delta_{r}$ in the
same way and conclude that $\Sigma_{\mu}$ is well-defined (and hence
holomorphic) in $\Omega_{r}$ for a suitable $r$. 

In addition, it is easy to see that 
\begin{equation}
\Sigma_{\mu}(z,w)=1/\overline{\Sigma_{\mu}(1/\overline{z},1/\overline{w})},\qquad(z,w)\in\Omega_{r},\label{eq:2.6}
\end{equation}
with the value $\Sigma_{\mu}(0,0)=m_{1,1}(\mu)/[m(\mu^{(1)})m(\mu^{(2)})]$
and $\lim_{z\rightarrow0,|w|\rightarrow\infty}\Sigma_{\mu}(z,w)=1$. 
\begin{rem*}
If we only assume $m(\mu^{(1)})\neq0\neq m(\mu^{(2)})$, then the
map $\Sigma_{\mu}$ is still defined in $\Omega_{r}$, except possibly
in the unbounded component $\Delta_{r}\times\Delta_{r}$. For such
$\mu$, it is clear that $m_{1,1}(\mu)\neq0$ (so that $\mu\in\mathscr{P}_{\mathbb{T}^{2}}^{\times}$)
if and only if $\Sigma_{\mu}(0,0)\neq0$. 
\end{rem*}
As shown in \cite{part3}, the bi-free convolution $\mu_{1}\boxtimes\boxtimes\mu_{2}$
for any $\mu_{1},\mu_{2}\in\mathscr{P}_{\mathbb{T}^{2}}^{\times}$
is again in the class $\mathscr{P}_{\mathbb{T}^{2}}^{\times}$, and
the multiplicative property of the bi-free partial $S$-transform
yields 
\[
\Sigma_{\mu_{1}\boxtimes\boxtimes\mu_{2}}(z,w)=\Sigma_{\mu_{1}}(z,w)\,\Sigma_{\mu_{2}}(z,w)
\]
for $(z,w)$ in the bidisk component $D_{r}\times D_{r}$ of a product
set $\Omega_{r}$ on which all the involved $\Sigma$-transforms are
defined. 

The knowledge of the function $\Sigma_{\mu}$ alone is insufficient
to determine the measure $\mu$. For example, it was observed in \cite{Skoufranis}
that the transform $\Sigma_{\delta_{\lambda}}$ is constantly one
(in any $\Omega_{r}$), so that any measure $\mu$ and its marginal
rotation $\mu\boxtimes\boxtimes\delta_{\lambda}$ always share the
same $\Sigma$-transform. However, if the two marginals $\mu^{(1)}$,
$\mu^{(2)}$ are specified in advance, then the map $\mu\mapsto\Sigma_{\mu}$
will be injective. 
\begin{prop}
Let $\mu$ and $\nu$ be two measures in $\mathscr{P}_{\mathbb{T}^{2}}^{\times}$
such that $\mu^{(j)}=\nu^{(j)}$ for $j=1,2$. Then we have $\Sigma_{\mu}=\Sigma_{\nu}$
on a product set $\Omega_{r}$  if and only if $\mu=\nu$. \end{prop}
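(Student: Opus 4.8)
The ``if'' direction is immediate, so the plan is to prove the converse: assuming $\Sigma_{\mu}=\Sigma_{\nu}$ on a product set $\Omega_{r}$ together with $\mu^{(j)}=\nu^{(j)}$, I would deduce $\mu=\nu$. The guiding idea is that equal marginals strip away everything in the definition of the $\Sigma$-transform except the two-variable transform $\psi$, reducing the problem to showing $\psi_{\mu}=\psi_{\nu}$ and then invoking the measure-recovery formula \eqref{eq:2.2}. (Since $\mu^{(j)}=\nu^{(j)}$, the radii $r(\mu^{(j)})$ and $r(\nu^{(j)})$ agree, so a common $\Omega_{r}$ is available.)

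First I would restrict to the bidisk component $D_{r}\times D_{r}$ and record that $\mu^{(j)}=\nu^{(j)}$ forces $\eta_{\mu^{(j)}}^{-1}=\eta_{\nu^{(j)}}^{-1}$ and $\psi_{\mu^{(j)}}=\psi_{\nu^{(j)}}$. Writing $a=\eta_{\mu^{(1)}}^{-1}(z)$ and $b=\eta_{\mu^{(2)}}^{-1}(w)$, the arguments fed into the two-variable transforms are the \emph{same} for $\mu$ and $\nu$, so after cancelling the common factor $zw$ the hypothesis becomes
\[
\frac{\psi_{\mu}(a,b)}{H_{\mu}(a,b)}=\frac{\psi_{\nu}(a,b)}{H_{\nu}(a,b)}.
\]
Next I would use \eqref{eq:2.5} to write $H_{\mu}(a,b)=\psi_{\mu}(a,b)+c$ and $H_{\nu}(a,b)=\psi_{\nu}(a,b)+c$ with the \emph{common} quantity $c=\psi_{\mu^{(1)}}(a)+\psi_{\mu^{(2)}}(b)+1$. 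Substituting and cross-multiplying, the mixed products $\psi_{\mu}\psi_{\nu}$ cancel and one is left with the clean relation
\[
c\,\bigl(\psi_{\mu}(a,b)-\psi_{\nu}(a,b)\bigr)=0.
\]

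The hard part is converting this into an honest open-set equality of $\psi_{\mu}$ and $\psi_{\nu}$, since a priori the equation only asserts that at each point either $c$ vanishes or the $\psi$'s agree. I would resolve this at the base point $(0,0)$: each $\eta_{\mu^{(j)}}^{-1}$ has a simple zero at the origin, so $(z,w)\mapsto(a,b)$ is a biholomorphism between neighborhoods of $(0,0)$, and there $c=\psi_{\mu^{(1)}}(0)+\psi_{\mu^{(2)}}(0)+1=1\neq0$. Hence $c$ is zero-free on a full neighborhood of $(0,0)$ in the $(a,b)$-variables, which forces $\psi_{\mu}=\psi_{\nu}$ on that neighborhood.

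Finally, since $\psi_{\mu}$ and $\psi_{\nu}$ are holomorphic on the connected bidisk $\mathbb{D}^{2}$, the identity theorem for functions of two complex variables propagates the equality to all of $\mathbb{D}^{2}$. Combined with the agreement of the marginal transforms, this makes the auxiliary function $g$ of the two measures coincide on $\mathbb{D}^{2}$, so the Poisson-type identity \eqref{eq:2.2} yields equal Poisson integrals of $d\mu(1/s,1/t)$ and $d\nu(1/s,1/t)$; by uniqueness of the Poisson representation this gives $\mu=\nu$.
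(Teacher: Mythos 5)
Your algebra is fine up to the point where you obtain $\psi_{\mu}=\psi_{\nu}$ on $\mathbb{D}^{2}$: inverting the definition of $\Sigma_{\mu}$ via \eqref{eq:2.5} to get $c\,(\psi_{\mu}-\psi_{\nu})=0$ with $c=\psi_{\mu^{(1)}}+\psi_{\mu^{(2)}}+1$, and killing $c$ near the origin, is essentially the same computation the paper performs (the paper writes it as the explicit formula $\psi_{\mu}=c\cdot E\Sigma_{\mu}/(1-E\Sigma_{\mu})$ with $E=\eta_{\mu^{(1)}}(z)\eta_{\mu^{(2)}}(w)$). The gap is in the last step. The identity \eqref{eq:2.2} does not read off the Poisson integral of $d\mu(1/s,1/t)$ from $g$ on $\mathbb{D}^{2}$ alone: its left-hand side involves $g(z,1/\overline{w})$, and for $w\in\mathbb{D}\setminus\{0\}$ the point $(z,1/\overline{w})$ lies in the component $\mathbb{D}\times\Delta_{1}$, to which the identity theorem cannot carry your equality from the bidisk. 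This is not a cosmetic issue: $\psi_{\mu}$ restricted to $\mathbb{D}^{2}$ encodes only the moments $\int s^{p}t^{q}\,d\mu$ with $p,q\geq1$, the marginals supply those with $pq=0$, and conjugation gives $p,q\leq-1$; the mixed moments $\int s^{p}t^{-q}\,d\mu$ with $p,q\geq1$ are genuinely independent data (one can perturb the Fourier coefficient at $(1,-1)$ of a measure in $\mathscr{P}_{\mathbb{T}^{2}}^{\times}$ without changing its marginals or its $\psi$-transform on $\mathbb{D}^{2}$). So equality of $g$ on $\mathbb{D}^{2}$ plus equal marginals does not force $\mu=\nu$.

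The repair is to not restrict to the bidisk component: the hypothesis $\Sigma_{\mu}=\Sigma_{\nu}$ is given on all of $\Omega_{r}$, which meets each of the four connected components of $(\mathbb{C}\setminus\mathbb{T})^{2}$. This is exactly how the paper proceeds: it establishes the recovery formula for $\psi_{\mu}$ on $\Omega_{r/2}$ (after noting $\eta_{\mu^{(j)}}(D_{r/2})\subset D_{r}$ and $\eta_{\mu^{(j)}}(\Delta_{r/2})\subset\Delta_{r}$), and then extends $\psi_{\mu}=\psi_{\nu}$ by analyticity within each of the four components separately, after which \eqref{eq:2.2} applies. If you prefer your $c\,(\psi_{\mu}-\psi_{\nu})=0$ formulation, note that on $D_{r}\times\Delta_{r}$ your base-point trick needs care, since $c\rightarrow\psi_{\mu^{(1)}}(0)+\psi_{\mu^{(2)}}(\infty)+1=0$ there; but $c$ is a holomorphic function on the connected set $\mathbb{D}\times\Delta_{1}$ that is not identically zero, so $c\,(\psi_{\mu}-\psi_{\nu})\equiv0$ still forces $\psi_{\mu}\equiv\psi_{\nu}$ off the zero variety of $c$ and hence everywhere by continuity.
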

\begin{proof}
Only the necessity statement requires a proof. Assume the equalities
$\mu^{(j)}=\nu^{(j)}$ ($j=1,2$) and $\Sigma_{\mu}=\Sigma_{\nu}$
on some $\Omega_{r}=(D_{r}\cup\Delta_{r})\times(D_{r}\cup\Delta_{r})$.
Since any $\eta$-transform is a contraction toward zero in $\mathbb{D}$
and an expansion toward $\infty$ in $\Delta_{1}$, one has $\eta_{\mu^{(j)}}(D_{r/2})\subset D_{r}$
and $\eta_{\mu^{(j)}}(\Delta_{r/2})\subset\Delta_{r}$ for $j=1,2$.
Then we obtain 
\begin{eqnarray*}
\psi_{\mu}(z,w) & = & \left[\psi_{\mu^{(1)}}(z)+\psi_{\mu^{(2)}}(w)+1\right]\frac{\eta_{\mu^{(1)}}(z)\eta_{\mu^{(2)}}(w)\Sigma_{\mu}\left(\eta_{\mu^{(1)}}(z),\eta_{\mu^{(2)}}(w)\right)}{1-\eta_{\mu^{(1)}}(z)\eta_{\mu^{(2)}}(w)\Sigma_{\mu}\left(\eta_{\mu^{(1)}}(z),\eta_{\mu^{(2)}}(w)\right)}\\
 & = & \psi_{\nu}(z,w)
\end{eqnarray*}
for $(z,w)$ in the smaller product set $\Omega_{r/2}$. Since $\Omega_{r/2}$
is open in $\mathbb{C}^{2}$ and has a nonempty intersection with
each of the four connected components of $(\mathbb{C}\setminus\mathbb{T})^{2}$,
we conclude by analyticity that $\psi_{\mu}=\psi_{\nu}$ in $(\mathbb{C}\setminus\mathbb{T})^{2}$.
As a result, the Poisson integral formula (\ref{eq:2.2}) yields $\mu=\nu$. 
\end{proof}
The preceding result also holds in the larger class of probability measures with nonzero marginal means. However, such a general result is not needed in this paper. 

The proof of Proposition 2.5 shows a procedure of recovering the measure $\mu$
from its marginal laws and the map $\Sigma_{\mu}$. Moreover, we see
that the case $\Sigma_{\mu}=1$ corresponds to the situation $\psi_{\mu}=\psi_{\mu^{(1)}}\cdot\psi_{\mu^{(2)}}=\psi_{\mu^{(1)}\otimes\mu^{(2)}}$.
In particular, one obtains the following result.
\begin{cor}
For $\mu\in\mathscr{P}_{\mathbb{T}^{2}}^{\times}$, the transform
$\Sigma_{\mu}$ is constantly one in a product set $\Omega_{r}$ if
and only if the measure $\mu$ is the product measure of its own marginal
laws. 
\end{cor}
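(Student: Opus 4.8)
The plan is to deduce both implications from the recovery formula established inside the proof of Proposition 2.5, namely
\[
\psi_{\mu}(z,w)=\left[\psi_{\mu^{(1)}}(z)+\psi_{\mu^{(2)}}(w)+1\right]\frac{\eta_{\mu^{(1)}}(z)\,\eta_{\mu^{(2)}}(w)\,\Sigma_{\mu}\!\left(\eta_{\mu^{(1)}}(z),\eta_{\mu^{(2)}}(w)\right)}{1-\eta_{\mu^{(1)}}(z)\,\eta_{\mu^{(2)}}(w)\,\Sigma_{\mu}\!\left(\eta_{\mu^{(1)}}(z),\eta_{\mu^{(2)}}(w)\right)},
\]
which holds for $(z,w)\in\Omega_{r/2}$. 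For the necessity direction I would substitute $\Sigma_{\mu}\equiv1$ and simplify. Abbreviating $\psi_{j}=\psi_{\mu^{(j)}}$ and $\eta_{j}=\eta_{\mu^{(j)}}$ and using the defining relation $\eta_{j}=\psi_{j}/(1+\psi_{j})$, equivalently $\psi_{j}=\eta_{j}/(1-\eta_{j})$, the fraction collapses to
\[
\frac{\eta_{1}\eta_{2}}{1-\eta_{1}\eta_{2}}=\frac{\psi_{1}\psi_{2}}{1+\psi_{1}+\psi_{2}},
\]
so the leading bracket cancels the denominator and leaves $\psi_{\mu}=\psi_{1}\psi_{2}$ on $\Omega_{r/2}$.

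By Fubini's theorem the product $\psi_{\mu^{(1)}}(z)\,\psi_{\mu^{(2)}}(w)$ is precisely $\psi_{\mu^{(1)}\otimes\mu^{(2)}}(z,w)$, so $\psi_{\mu}$ and $\psi_{\mu^{(1)}\otimes\mu^{(2)}}$ agree on the open set $\Omega_{r/2}$. Since $\Omega_{r/2}$ meets every one of the four connected components of $(\mathbb{C}\setminus\mathbb{T})^{2}$, analyticity propagates this equality to all of $(\mathbb{C}\setminus\mathbb{T})^{2}$, and the Poisson integral uniqueness expressed in (\ref{eq:2.2}) then forces $\mu=\mu^{(1)}\otimes\mu^{(2)}$.

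For the converse I would argue by direct substitution into the definition of $\Sigma_{\mu}$. Assume $\mu=\mu^{(1)}\otimes\mu^{(2)}$ and write $Z=\eta_{\mu^{(1)}}^{-1}(z)$, $W=\eta_{\mu^{(2)}}^{-1}(w)$. Fubini gives $\psi_{\mu}(Z,W)=\psi_{\mu^{(1)}}(Z)\,\psi_{\mu^{(2)}}(W)$, and from $\eta_{\mu^{(j)}}=\psi_{\mu^{(j)}}/(1+\psi_{\mu^{(j)}})$ one reads off $\psi_{\mu^{(1)}}(Z)=z/(1-z)$ and $\psi_{\mu^{(2)}}(W)=w/(1-w)$, so that $\psi_{\mu}(Z,W)=zw/[(1-z)(1-w)]$. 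Feeding these same values into the identity (\ref{eq:2.5}) collapses $H_{\mu}(Z,W)$ to $1/[(1-z)(1-w)]$, whence $\Sigma_{\mu}(z,w)=\psi_{\mu}(Z,W)/[zw\,H_{\mu}(Z,W)]=1$.

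The computations are short and elementary, so I do not expect a genuine obstacle; the only points demanding care are bookkeeping ones, namely verifying the algebraic collapse of the fraction above and making sure the analytic continuation step is legitimate. The latter amounts to checking that the image of $\Omega_{r/2}$ under the pair of $\eta$-maps genuinely intersects all four components of $(\mathbb{C}\setminus\mathbb{T})^{2}$, which is exactly the contraction/expansion fact $\eta_{\mu^{(j)}}(D_{r/2})\subset D_{r}$ and $\eta_{\mu^{(j)}}(\Delta_{r/2})\subset\Delta_{r}$ already established in the proof of Proposition 2.5.
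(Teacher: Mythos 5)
Your proposal is correct and takes essentially the same route as the paper, whose (very brief) justification is precisely that the recovery formula in the proof of Proposition 2.5 reduces $\Sigma_{\mu}\equiv1$ to $\psi_{\mu}=\psi_{\mu^{(1)}}\cdot\psi_{\mu^{(2)}}=\psi_{\mu^{(1)}\otimes\mu^{(2)}}$, which determines $\mu$ by the Poisson integral uniqueness. You merely spell out the algebraic collapse and the direct computation for the converse, both of which check out.
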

We next introduce appropriate transforms to treat the opposite bi-free
convolution. 
\begin{defn}
For any $\mu\in\mathscr{P}_{\mathbb{T}^{2}}^{\times}$, we define
its \emph{opposite bi-free partial $S$-transform} $S_{\mu}^{\text{op}}$
by
\[
S_{\mu}^{\text{op}}(z,w)=\frac{w(z+1)}{z(w+1)}\left[1+\frac{z-w}{H_{\mu}\left(\psi_{\mu^{(1)}}^{-1}(z),\psi_{\mu^{(2)}}^{-1}(w)\right)-z-1}\right].
\]
The \emph{opposite bi-free partial $\Sigma$-transform }of $\mu$
is defined as 
\[
\Sigma_{\mu}^{\text{op}}(z,w)=S_{\mu}^{\text{op}}\left(\frac{z}{1-z},\frac{w}{1-w}\right).
\]

\end{defn}
Below is a list of important properties of these opposite transforms,
in which the multiplicative property (3) is derived by following Voiculescu's
original arguments in \cite{part3}. 
\begin{prop}
We have:
\begin{enumerate}
\item To each $\mu\in\mathscr{P}_{\mathbb{T}^{2}}^{\times}$, there exists
$r=r(\mu)\in(0,1)$ such that the opposite transforms $\Sigma_{\mu}^{\text{\emph{op}}}$
and $S_{\mu}^{\text{\emph{op}}}$ are well-defined and holomorphic
in the bidisk $D_{r}\times D_{r}$. 
\item The formula 
\[
\Sigma_{\mu^{*}}^{\text{\emph{op}}}(z,w)=\Sigma_{\mu}(z,1/w)
\]
holds for $(z,w)\in D_{r}\times D_{r}$ and $w\neq0$, where $\mu^{*}$
is the coordinate-reflection of $\mu$. 
\item If $\mu_{1},\mu_{2}\in\mathscr{P}_{\mathbb{T}^{2}}^{\times}$ then
the multiplicative identity 
\[
\Sigma_{\mu_{1}\boxtimes\boxtimes^{\text{\emph{op}}}\mu_{2}}^{\text{\emph{op}}}(z,w)=\Sigma_{\mu_{1}}^{\text{\emph{op}}}(z,w)\,\Sigma_{\mu_{2}}^{\text{\emph{op}}}(z,w)
\]
holds for $(z,w)$ in a small bidisk centered at the point $(0,0)$.
\end{enumerate}
\end{prop}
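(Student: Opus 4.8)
The plan is to reduce all three assertions to a single closed-form expression for $\Sigma_{\mu}^{\text{op}}$ that exposes its analytic structure. Writing $\widetilde{z}=z/(1-z)$, $\widetilde{w}=w/(1-w)$ and abbreviating $\Psi=\psi_{\mu}(\eta_{\mu^{(1)}}^{-1}(z),\eta_{\mu^{(2)}}^{-1}(w))$, I would use the stated identity $\psi_{\mu^{(j)}}^{-1}(z/(1-z))=\eta_{\mu^{(j)}}^{-1}(z)$ together with $\psi_{\mu^{(j)}}(\eta_{\mu^{(j)}}^{-1}(z))=z/(1-z)$ and the relation (\ref{eq:2.5}). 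One checks that after the change of variables the prefactor $\widetilde{w}(\widetilde{z}+1)/[\widetilde{z}(\widetilde{w}+1)]$ collapses to $w/z$, while $H_{\mu}(\eta_{\mu^{(1)}}^{-1}(z),\eta_{\mu^{(2)}}^{-1}(w))-(\widetilde{z}+1)$ simplifies to $\Psi+w/(1-w)$. A short algebraic manipulation then yields
\[
\Sigma_{\mu}^{\text{op}}(z,w)=\frac{w}{z}\cdot\frac{\Psi+z/(1-z)}{\Psi+w/(1-w)}=\frac{\Psi/z+1/(1-z)}{\Psi/w+1/(1-w)}.
\]
I expect the bookkeeping in this reduction (tracking the four pieces of the change of variables and the cancellations) to be the least illuminating but most error-prone step, so it is worth recording as a preliminary computation since it drives everything else.

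Part (1) is then immediate. Because $\eta_{\mu^{(j)}}^{-1}$ has a simple zero at the origin, $\Psi$ carries a factor of $z$ and a factor of $w$, so $\Psi/z$ and $\Psi/w$ are holomorphic near $(0,0)$ and vanish there. Hence both the numerator and the denominator of the last display are holomorphic near the origin and tend to $1$ as $(z,w)\rightarrow(0,0)$; the denominator is therefore zero-free in a small bidisk $D_{r}\times D_{r}$, giving the holomorphy of $\Sigma_{\mu}^{\text{op}}$ there (with $\Sigma_{\mu}^{\text{op}}(0,0)=1$) and that of $S_{\mu}^{\text{op}}$ through the inverse substitution $\zeta\mapsto\zeta/(1+\zeta)$. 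As after Proposition 2.5, I would remark that only the nonvanishing of the marginal means is actually used here.

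For part (2) I would first record how the transforms of the coordinate-reflection $\mu^{*}$ relate to those of $\mu$. Since $(\mu^{*})^{(1)}=\mu^{(1)}$ and $(\mu^{*})^{(2)}$ is the push-forward of $\mu^{(2)}$ under $t\mapsto1/t$, a direct integration gives $\eta_{(\mu^{*})^{(2)}}^{-1}(w)=1/\eta_{\mu^{(2)}}^{-1}(1/w)$ and $H_{\mu^{*}}(a,b)=1+\psi_{\mu^{(1)}}(a)-H_{\mu}(a,1/b)$. Feeding these into (\ref{eq:2.5}) applied to $\mu^{*}$ yields the clean identity $\Psi^{*}=-\widetilde{\Psi}-z/(1-z)$, where $\Psi^{*}$ is the quantity $\Psi$ formed from $\mu^{*}$ and $\widetilde{\Psi}=\psi_{\mu}(\eta_{\mu^{(1)}}^{-1}(z),\eta_{\mu^{(2)}}^{-1}(1/w))$. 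Substituting $\Psi^{*}$ into the closed form above collapses the numerator to $-\widetilde{\Psi}/z$ and the denominator to $-\widetilde{H}/w$, with $\widetilde{H}=H_{\mu}(\eta_{\mu^{(1)}}^{-1}(z),\eta_{\mu^{(2)}}^{-1}(1/w))$, so that $\Sigma_{\mu^{*}}^{\text{op}}(z,w)=w\widetilde{\Psi}/(z\widetilde{H})=\Sigma_{\mu}(z,1/w)$, which is exactly the asserted formula.

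Finally, part (3) I would deduce from part (2) rather than re-run the operator-model moment computation. Replacing $\mu_{i}$ by $\mu_{i}^{*}$ in the established relation $(\mu_{1}\boxtimes\boxtimes\mu_{2})^{*}=\mu_{1}^{*}\boxtimes\boxtimes^{\text{op}}\mu_{2}^{*}$ and using $\mu^{**}=\mu$ gives $(\mu_{1}\boxtimes\boxtimes^{\text{op}}\mu_{2})^{*}=\mu_{1}^{*}\boxtimes\boxtimes\mu_{2}^{*}$. Applying part (2) in the form $\Sigma_{\sigma}^{\text{op}}(z,w)=\Sigma_{\sigma^{*}}(z,1/w)$ to $\sigma=\mu_{1}\boxtimes\boxtimes^{\text{op}}\mu_{2}$, then invoking the multiplicative identity for the ordinary $\Sigma$-transform at the point $(z,1/w)$, and converting each factor back through part (2), one obtains $\Sigma_{\mu_{1}\boxtimes\boxtimes^{\text{op}}\mu_{2}}^{\text{op}}=\Sigma_{\mu_{1}}^{\text{op}}\,\Sigma_{\mu_{2}}^{\text{op}}$ on a small bidisk. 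The one point requiring care, and the main obstacle in this route, is that $\mu_{i}^{*}$ need not lie in $\mathscr{P}_{\mathbb{T}^{2}}^{\times}$, since its $(1,1)$-moment $\int s\overline{t}\,d\mu_{i}(s,t)$ can vanish; I must therefore use the multiplicative property for $\Sigma$ in the slightly larger class of measures with nonzero marginal means. By the closed form above this extension costs nothing, as holomorphy on the bidisk requires only the marginal means to be nonzero.
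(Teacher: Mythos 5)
Your closed-form expression for $\Sigma_{\mu}^{\text{op}}$ is exactly the quotient the paper itself uses to prove part (1), and your parts (1) and (2) are correct and essentially identical to the paper's argument: the paper computes $\psi_{\mu^{*}}$ directly where you route through $H_{\mu^{*}}$, but the key identity $\Psi^{*}=-\widetilde{\Psi}-z/(1-z)$ and the resulting cancellation of numerator and denominator are the same.

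Part (3) is where the proposal breaks down, and the problem is more serious than the $\mathscr{P}_{\mathbb{T}^{2}}^{\times}$-membership issue you flag at the end: your derivation is circular. For $(z,w)$ in a small bidisk about the origin with $w\neq0$, the point $(z,1/w)$ lies in the unbounded mixed component $D_{r}\times\Delta_{r}$ of $\Omega_{r}$. The only multiplicative identity for the ordinary $\Sigma$-transform available before Proposition 2.8 is Voiculescu's, which holds on the bidisk component $D_{r}\times D_{r}$; its extension to the mixed components is precisely Corollary 2.9, and the paper proves Corollary 2.9 \emph{from} Proposition 2.8(3). Nor can the identity on $D_{r}\times\Delta_{r}$ be obtained for free: the four components of $\Omega_{r}$ are disjoint open sets, so agreement of two holomorphic functions on $D_{r}\times D_{r}$ says nothing about $D_{r}\times\Delta_{r}$, and the reflection symmetry (2.6) carries $D_{r}\times\Delta_{r}$ onto $\Delta_{r}\times D_{r}$, not onto the bidisk. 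This is exactly why the paper proves (3) by the direct operator-model computation: it expands $(1-ta_{1}a_{2})^{-1}(1-sb_{2}b_{1})^{-1}$ via Haagerup's resolvent identities, evaluates the mixed moments with Lemma 2.2, and sums the geometric series to arrive at $1+1/F=\frac{w(z+1)}{z(w+1)}\left[1+1/F_{1}\right]\left[1+1/F_{2}\right]$. Some such independent moment computation is unavoidable here; (3) cannot be recovered from (2) together with what is known about $\Sigma$ at this stage of the development. (Your observation that only nonvanishing marginal means are really needed is correct and consistent with the paper's remark after Proposition 2.5, but it does not repair the circularity.)
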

\begin{proof}
The proof of (1) is straightforward and is similar to the case of
$\Sigma_{\mu}$. Indeed, we use (\ref{eq:2.5}) to re-write the map
$\Sigma_{\mu}^{\text{op}}$ as the quotient 
\[
\Sigma_{\mu}^{\text{op}}(z,w)=\frac{\psi_{\mu}\left(\eta_{\mu^{(1)}}^{-1}(z),\eta_{\mu^{(2)}}^{-1}(w)\right)/z+1/(1-z)}{\psi_{\mu}\left(\eta_{\mu^{(1)}}^{-1}(z),\eta_{\mu^{(2)}}^{-1}(w)\right)/w+1/(1-w)}
\]
of holomorphic maps in a neighborhood of $(0,0)$. Since the denominator
has limit $1$ at $(0,0)$, such a radius $r$ can always be found.
The case of $S_{\mu}^{\text{op}}$ follows from the substitution $z\mapsto z/(1+z) $, $w\mapsto w/(1+w)$. 

For (2), we first note that 
\[
\psi_{[\mu^{*}]^{(2)}}(w)=\overline{\psi_{\mu^{(2)}}(\overline{w})},\quad\eta_{[\mu^{*}]^{(2)}}(w)=\overline{\eta_{\mu^{(2)}}(\overline{w})}
\]
and $\psi_{[\mu^{*}]^{(1)}}(z)=\psi_{\mu^{(1)}}(z)$ for $(z,w)\in(\mathbb{C}\setminus\mathbb{T})^{2}$.
Therefore, for $(z,w)\in D_{r}\times D_{r}$ and $w\neq0$, we compute
\begin{eqnarray*}
\psi_{\mu^{*}}\left(\eta_{[\mu^{*}]^{(1)}}^{-1}(z),\eta_{[\mu^{*}]^{(2)}}^{-1}(w)\right) & = & -\psi_{\mu}\left(\eta_{\mu^{(1)}}^{-1}(z),1/\overline{\eta_{\mu^{(2)}}^{-1}(\overline{w})}\right)-\psi_{\mu^{(1)}}\left(\eta_{\mu^{(1)}}^{-1}(z)\right)\\
 & = & -\psi_{\mu}\left(\eta_{\mu^{(1)}}^{-1}(z),\eta_{\mu^{(2)}}^{-1}(1/w)\right)-z/(1-z)
\end{eqnarray*}
and 
\begin{eqnarray*}
\Sigma_{\mu^{*}}^{\text{op}}(z,w) & = & \frac{1}{z(1/w)}\left[\frac{\psi_{\mu^{*}}\left(\eta_{[\mu^{*}]^{(1)}}^{-1}(z),\eta_{[\mu^{*}]^{(2)}}^{-1}(w)\right)+z/(1-z)}{\psi_{\mu^{*}}\left(\eta_{[\mu^{*}]^{(1)}}^{-1}(z),\eta_{[\mu^{*}]^{(2)}}^{-1}(w)\right)+w/(1-w)}\right]\\
 & = & \frac{1}{z(1/w)}\left[\frac{\psi_{\mu}\left(\eta_{\mu^{(1)}}^{-1}(z),\eta_{\mu^{(2)}}^{-1}(1/w)\right)}{\psi_{\mu}\left(\eta_{\mu^{(1)}}^{-1}(z),\eta_{\mu^{(2)}}^{-1}(1/w)\right)+z/(1-z)+1/(w-1)+1}\right]\\
 & = & \Sigma_{\mu}(z,1/w),
\end{eqnarray*}
as desired.

We now prove (3). For reader's convenience of cross-referring to \cite{part3},
we present this proof using the same notations in Voiculescu's original
arguments. Thus, let $(a_{1},b_{1})$ and $(a_{2},b_{2})$ be two
two-faced pairs of commuting unitaries in a $C^{*}$-probability space
$(\mathcal{A},\varphi)$. Assume that $(a_{1},b_{1})$ and $(a_{2},b_{2})$
are bi-free and that $\varphi(a_{k})\neq0$ and $\varphi(b_{k})\neq0$
for $k=1,2$. Then we aim to prove the identity 
\[
S_{(a_{1}a_{2},b_{2}b_{1})}^{\text{op}}(z,w)=S_{(a_{1},b_{1})}^{\text{op}}(z,w)\, S_{(a_{2},b_{2})}^{\text{op}}(z,w)
\]
for $(z,w)$ near the point $(0,0)$. Here, by the spectral theorem,
the opposite $S$-transform 
\[
S_{(a,b)}^{\text{op}}(z,w)=\frac{w(z+1)}{z(w+1)}\left[1+\frac{z-w}{H_{(a,b)}\left(\psi_{a}^{-1}(z),\psi_{b}^{-1}(w)\right)-z-1}\right]
\]
for a pair $(a,b)$ of commuting unitaries is interpreted as an absolutely
convergent power series near $(0,0)$, where 
\[
H_{(a,b)}(t,s)=\varphi\left([1-ta]^{-1}[1-sb]^{-1}\right).
\]

Recall from \cite{part3} that the functions $h_{a}(t)=\varphi\left([1-ta]^{-1}\right)$,
$h_{b}(s)=\varphi\left([1-sb]^{-1}\right)$ and the centered resolvents
\[
a(t)=(1-ta)^{-1}-h_{a}(t)1,\quad b(s)=(1-sb)^{-1}-h_{b}(s)1,
\]
are well-defined if the complex numbers $t$ and $s$ come sufficiently
close to zero. 

Using Lemma 2.4 in \cite{part3}, we know that if two nonzero complex
numbers $z$ and $w$ are sufficiently to zero, then the following
complex numbers 
\[
\begin{cases}
t_{k}=\psi_{a_{k}}^{-1}(z),\; s_{k}=\psi_{b_{k}}^{-1}(w), & k=1,2,\\
t=\psi_{a_{1}a_{2}}^{-1}(z),\; s=\psi_{b_{2}b_{1}}^{-1}(w),\\
\rho=1/(z^{2}+z),\;\sigma=1/(w^{2}+w),
\end{cases}
\]
are also nonzero and satisfy
\[
\begin{cases}
\left\Vert \rho a_{1}(t_{1})a_{2}(t_{2})\right\Vert <1,\;\left\Vert \sigma b_{2}(s_{2})b_{1}(s_{1})\right\Vert <1,\\
h_{a_{1}a_{2}}(t)=h_{a_{1}}(t_{1})=h_{a_{2}}(t_{2})=z+1\notin\left\{ 0,1\right\} ,\\
h_{b_{2}b_{1}}(s)=h_{b_{1}}(s_{1})=h_{b_{2}}(s_{2})=w+1\notin\left\{ 0,1\right\} ,\\
(z+1)(1-ta_{1}a_{2})^{-1}=[a_{2}(t_{2})+h_{a_{2}}(t_{2})1][1-\rho a_{1}(t_{1})a_{2}(t_{2})]^{-1}[a_{1}(t_{1})+h_{a_{1}}(t_{1})1],\\
(w+1)(1-sb_{2}b_{1})^{-1}=[b_{1}(s_{1})+h_{b_{1}}(s_{1})1][1-\sigma b_{2}(s_{2})b_{1}(s_{1})]^{-1}[b_{2}(s_{2})+h_{b_{2}}(s_{2})1].
\end{cases}
\]
(These results are originally due to Haagerup in \cite{Haagerup}.)
In the sequel we will confine ourselves to the case of such $z$ and
$w$. 

As in \cite{part3}, we start with a calculation of mixed moments
of resolvents. For any $j,k\geq0$, consider the product
\begin{eqnarray*}
* & = & \left\{ [a_{2}(t_{2})+h_{a_{2}}(t_{2})1][\rho a_{1}(t_{1})a_{2}(t_{2})]^{j}[a_{1}(t_{1})+h_{a_{1}}(t_{1})1]\right.\\
 &  & \left.[b_{1}(s_{1})+h_{b_{1}}(s_{1})1][\sigma b_{2}(s_{2})b_{1}(s_{1})]^{k}[b_{2}(s_{2})+h_{b_{2}}(s_{2})1]\right\} .
\end{eqnarray*}
By Lemma 2.2, only the following products of random variables can
make a nonzero contribution to the mixed moment $\varphi(*)$; namely,
\[
\begin{cases}
*_{1}=a_{2}(t_{2})[\rho a_{1}(t_{1})a_{2}(t_{2})]^{j}a_{1}(t_{1})h_{b_{1}}(s_{1})[\sigma b_{2}(s_{2})b_{1}(s_{1})]^{k}h_{b_{2}}(s_{2}), & k=j+1;\\
*_{2}=h_{a_{2}}(t_{2})[\rho a_{1}(t_{1})a_{2}(t_{2})]^{j}a_{1}(t_{1})b_{1}(s_{1})[\sigma b_{2}(s_{2})b_{1}(s_{1})]^{k}h_{b_{2}}(s_{2}), & k=j;\\
*_{3}=a_{2}(t_{2})[\rho a_{1}(t_{1})a_{2}(t_{2})]^{j}h_{a_{1}}(t_{1})h_{b_{1}}(s_{1})[\sigma b_{2}(s_{2})b_{1}(s_{1})]^{k}b_{2}(s_{2}), & k=j;\\
*_{4}=h_{a_{2}}(t_{2})[\rho a_{1}(t_{1})a_{2}(t_{2})]^{j}h_{a_{1}}(t_{1})b_{1}(s_{1})[\sigma b_{2}(s_{2})b_{1}(s_{1})]^{k}b_{2}(s_{2}), & k=j-1,\: j\geq1;\\
*_{5}=h_{a_{2}}(t_{2})h_{a_{1}}(t_{1})h_{b_{1}}(s_{1})h_{b_{2}}(s_{2})1=(z+1)^{2}(w+1)^{2}1.
\end{cases}
\]
(The product $*_{5}$ corresponds to the case of $j=0=k$.) So, we
have the expectations
\[
\begin{cases}
\varphi(*_{1})=(\rho\sigma x_{1}x_{2})^{j}\frac{w+1}{w}x_{1}x_{2}, & j\geq0;\\
\varphi(*_{2})=(\rho\sigma x_{1}x_{2})^{j}(z+1)(w+1)x_{1}, & j\geq0;\\
\varphi(*_{3})=(\rho\sigma x_{1}x_{2})^{j}(z+1)(w+1)x_{2}, & j\geq0;\\
\varphi(*_{4})=(\rho\sigma x_{1}x_{2})^{j}(z+1)^{2}w(w+1), & j\geq1;\\
\varphi(*_{5})=(z+1)^{2}(w+1)^{2},
\end{cases}
\]
where
\[
x_{k}=\varphi(a_{k}(t_{k})b_{k}(s_{k}))=H_{(a_{k},b_{k})}(t_{k},s_{k})-zw-w-z-1,\quad k=1,2.
\]
We next compute
\begin{eqnarray*}
(z+1)(w+1)H_{(a_{1}a_{2},b_{2}b_{1})}(t,s) & = & \varphi\left([z+1][1-ta_{1}a_{2}]^{-1}[w+1][1-sb_{2}b_{1}]^{-1}\right)\\
 & = & \sum_{|j|+|k|\geq0}\varphi(*)\\
 & = & \varphi(*_{5})+\sum_{|j|+|k|\geq 1}\left\{ \varphi(*_{1})+\varphi(*_{2})+\varphi(*_{3})+\varphi(*_{4})\right\} \\
 & = & (z+1)^{2}(w+1)^{2}-(z+1)^{2}w(w+1)\\
 &  & +\frac{w+1}{w}(x_{1}+zw+w)(x_{2}+zw+w)\sum_{j=0}^{\infty}(\rho\sigma x_{1}x_{2})^{j}\\
 & = & (z+1)^{2}(w+1)\\
 &  & +\frac{(w+1)(x_{1}+zw+w)(x_{2}+zw+w)}{w(1-\rho\sigma x_{1}x_{2})}.
\end{eqnarray*}
For $z\neq w$, we introduce further the notations 
\[
F_{k}=\frac{H_{(a_{k},b_{k})}(t_{k},s_{k})-z-1}{z-w}=\frac{x_{k}+zw+w}{z-w}\qquad(k=1,2)
\]
and 
\[
F=\frac{H_{(a_{1}a_{2},b_{2}b_{1})}(t,s)-z-1}{z-w},
\]
so that we can re-cast the last result into 
\[
F=F_{1}F_{2}\,\frac{z-w}{w(z+1)(1-\rho\sigma x_{1}x_{2})}.
\]
Since the factor 
\begin{eqnarray*}
\frac{w(z+1)(1-\rho\sigma x_{1}x_{2})}{z-w} & = & \frac{w(z+1)}{z-w}\left[1-\frac{x_{1}x_{2}}{zw(z+1)(w+1)}\right]\\
 & = & \frac{z-w}{z(w+1)}\left[-F_{1}F_{2}+\frac{w(z+1)}{z-w}F_{1}\right.\\
 &  & \left.+\frac{w(z+1)w}{z-w}F_{2}+\frac{w(z+1)}{z-w}\right],
\end{eqnarray*}
we obtain 
\[
1+\frac{1}{F}=\frac{w(z+1)}{z(w+1)}\left[1+\frac{1}{F_{1}}\right]\left[1+\frac{1}{F_{2}}\right],
\]
which is precisely what we are set to prove in the beginning. 

The statement (3) now follows by choosing appropriate bi-free random
vectors according to the given distributions $\mu_{1}$ and $\mu_{2}$
(see Section 2.1). \end{proof}
\begin{cor}
Let $\mu_{1},\mu_{2}\in\mathscr{P}_{\mathbb{T}^{2}}^{\times}$ and
let $\Omega_{r}$ be a common domain of definition for the transforms
$\Sigma_{\mu_{1}}$, $\Sigma_{\mu_{2}}$, and $\Sigma_{\mu_{1}\boxtimes\boxtimes\mu_{2}}$.
Then we have the multiplicative identity 
\[
\Sigma_{\mu_{1}\boxtimes\boxtimes\mu_{2}}(z,w)=\Sigma_{\mu_{1}}(z,w)\,\Sigma_{\mu_{2}}(z,w),\qquad(z,w)\in\Omega_{r}.
\]
\end{cor}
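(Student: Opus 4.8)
The plan is to deduce the multiplicative identity for $\Sigma$ on all of $\Omega_r$ from two ingredients already available: the known identity on the bidisk component $D_r\times D_r$ (stated just before Proposition~2.5, following \cite{part3}), and the multiplicativity of the opposite transform $\Sigma^{\text{op}}$ established in Proposition~2.8(3), linked to $\Sigma$ through the coordinate-reflection formula of Proposition~2.8(2). The underlying principle is that each of the four connected components of $\Omega_r=(D_r\cup\Delta_r)\times(D_r\cup\Delta_r)$ can be reached from the bidisk by the reflections $z\mapsto 1/\overline z$ and $w\mapsto 1/\overline w$, under which $\Sigma$ and $\Sigma^{\text{op}}$ transform in a controlled way.

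First I would dispose of the bidisk component $D_r\times D_r$ directly by invoking the identity $\Sigma_{\mu_1\boxtimes\boxtimes\mu_2}=\Sigma_{\mu_1}\Sigma_{\mu_2}$ recorded in the discussion preceding Proposition~2.5. Next, to handle the component $D_r\times\Delta_r$, I would pass to the reflected coordinate $w\mapsto 1/w$ and use Proposition~2.8(2), which gives $\Sigma_\mu(z,1/w)=\Sigma^{\text{op}}_{\mu^*}(z,w)$ on $D_r\times D_r$. Applying this to $\mu_1$, $\mu_2$, and $\mu_1\boxtimes\boxtimes\mu_2$, and recalling from Section~2.1 the reflection rule $(\mu_1\boxtimes\boxtimes\mu_2)^*=\mu_1^*\boxtimes\boxtimes^{\text{op}}\mu_2^*$, the desired factorization on $D_r\times\Delta_r$ becomes exactly the opposite multiplicative identity
\[
\Sigma^{\text{op}}_{\mu_1^*\boxtimes\boxtimes^{\text{op}}\mu_2^*}(z,w)=\Sigma^{\text{op}}_{\mu_1^*}(z,w)\,\Sigma^{\text{op}}_{\mu_2^*}(z,w),
\]
which is Proposition~2.8(3) applied to the pair $(\mu_1^*,\mu_2^*)$. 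The symmetric component $\Delta_r\times D_r$ is treated the same way after introducing the left coordinate-reflection (or, equivalently, by the symmetry \eqref{eq:2.6}). Finally the fully unbounded component $\Delta_r\times\Delta_r$ is reached from $D_r\times D_r$ by applying the reflection symmetry \eqref{eq:2.6} to all three transforms simultaneously: writing $\Sigma_\mu(z,w)=1/\overline{\Sigma_\mu(1/\overline z,1/\overline w)}$ for each of $\mu_1$, $\mu_2$, $\mu_1\boxtimes\boxtimes\mu_2$ converts the already-proven identity at the reflected point $(1/\overline z,1/\overline w)\in D_r\times D_r$ into the identity at $(z,w)$, since conjugation and reciprocation are multiplicative.

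Having established the identity on a nonempty open subset of each of the four components, I would invoke the identity theorem for holomorphic functions of two variables to conclude that $\Sigma_{\mu_1\boxtimes\boxtimes\mu_2}=\Sigma_{\mu_1}\Sigma_{\mu_2}$ throughout $\Omega_r$; here one must be slightly careful, as each component is a connected open set on which all three $\Sigma$-transforms are holomorphic, so the equality propagates componentwise. The main obstacle I anticipate is \emph{bookkeeping the reflections consistently}: one must verify that the hypothesis $\mu_i\in\mathscr{P}^{\times}_{\mathbb{T}^2}$ is preserved under $\mu\mapsto\mu^*$ (so that Proposition~2.8 applies to $\mu_i^*$), and that the domain on which Proposition~2.8(3) holds is genuinely compatible—after reflection—with the prescribed $\Omega_r$. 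Both are routine once the reflection $\mu\mapsto\mu^*$ is seen to fix the first marginal, invert the second, and preserve nonvanishing of all the relevant means and moments, but they are exactly the points where an error would slip in, so I would check them explicitly before appealing to analyticity.
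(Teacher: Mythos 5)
Your proposal is correct and follows essentially the same route as the paper: the identity on $D_{r}\times D_{r}$ is taken from Voiculescu, the component $D_{r}\times\Delta_{r}$ is handled by combining Proposition 2.8(2), the relation $(\mu_{1}\boxtimes\boxtimes\mu_{2})^{*}=\mu_{1}^{*}\boxtimes\boxtimes^{\text{op}}\mu_{2}^{*}$, and the opposite multiplicative identity of Proposition 2.8(3), and the remaining two components follow from the reflection symmetry (2.6). Your extra care about propagating from the small bidisk of Proposition 2.8(3) via the identity theorem, and about $\mu\mapsto\mu^{*}$ preserving membership in $\mathscr{P}_{\mathbb{T}^{2}}^{\times}$, is sound and only makes explicit what the paper leaves implicit.
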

\begin{proof}
By (\ref{eq:2.6}), it suffices to prove this identity on the unbounded
component $D_{r}\times\Delta_{r}$. Indeed, for $(z,w)\in D_{r}\times\Delta_{r}$,
we have 
\begin{eqnarray*}
\Sigma_{\mu_{1}\boxtimes\boxtimes\mu_{2}}(z,w) & = & \Sigma_{(\mu_{1}\boxtimes\boxtimes\mu_{2})^{*}}^{\text{op}}(z,1/w)\\
 & = & \Sigma_{\mu_{1}^{*}\boxtimes\boxtimes^{\text{op}}\mu_{2}^{*}}^{\text{op}}(z,1/w)\\
 & = & \Sigma_{\mu_{1}^{*}}^{\text{op}}(z,1/w)\,\Sigma_{\mu_{2}^{*}}^{\text{op}}(z,1/w)\\
 & = & \Sigma_{\mu_{1}}(z,w)\,\Sigma_{\mu_{2}}(z,w).
\end{eqnarray*}
\end{proof}

By 1-D free harmonic analysis \cite{BerVoi}, a sequence $\{\nu_{n}\}_{n=1}^{\infty}\subset\mathscr{P}_{\mathbb{T}}^{\times}$
converges weakly to $\nu\in\mathscr{P}_{\mathbb{T}}^{\times}$ if
and only if there exists $r\in(0,1)$ so that all $\eta_{\nu_{n}}^{-1}$
and $\eta_{\nu}^{-1}$ are defined in the open set $D_{r}\cup\Delta_{r}$
and $\eta_{\nu_{n}}^{-1}\rightarrow\eta_{\nu}^{-1}$ locally uniformly
in $D_{r}\cup\Delta_{r}$, i.e., uniform convergence over compact
subsets of $D_{r}\cup\Delta_{r}$. On the other hand, the weak convergence
$\nu_{n}\Rightarrow\nu$ is also equivalent to the local uniform convergence
$\psi_{\nu_{n}}\rightarrow\psi_{\nu}$ in $\mathbb{C}\setminus\mathbb{T}$.
We end this section with a continuity theorem for the $\Sigma$-transform. 
\begin{thm}
Let $\{\mu_{n}\}_{n=1}^{\infty}$ be a sequence in $\mathscr{P}_{\mathbb{T}^{2}}^{\times}$.
The sequence $\{\mu_{n}\}_{n=1}^{\infty}$ converges weakly to a probability
measure in $\mathscr{P}_{\mathbb{T}^{2}}^{\times}$ if and only if
(i) both $\{\mu_{n}^{(1)}\}_{n=1}^{\infty}$ and $\{\mu_{n}^{(2)}\}_{n=1}^{\infty}$
are weakly convergent in $\mathscr{P}_{\mathbb{T}}^{\times}$ and
(ii) there exists a domain of definition $\Omega_{r}=(D_{r}\cup\Delta_{r})\times(D_{r}\cup\Delta_{r})$
such that the pointwise limit $\Sigma(z,w)=$$\lim_{n\rightarrow\infty}\Sigma_{\mu_{n}}(z,w)$
exists for all $(z,w)\in\Omega_{r}$ and $\Sigma(0,0)\neq0$. Moreover,
if $\mu_{n}\Rightarrow\mu$ then we have $\Sigma=\Sigma_{\mu}$ on
$\Omega_{r}$. \end{thm}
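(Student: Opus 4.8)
The plan is to prove the necessity direction first, and then deduce sufficiency from it by a subsequence argument. This order is important: the sufficiency proof hinges on the already-established necessity and on the injectivity of Proposition 2.5, which together let me sidestep the delicate problem of upgrading the pointwise convergence in (ii) to a locally uniform convergence of the sequence $\{\Sigma_{\mu_n}\}$ itself.

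For necessity, suppose $\mu_n\Rightarrow\mu$ with $\mu\in\mathscr{P}_{\mathbb{T}^2}^\times$. Condition (i) is immediate: the coordinate projections are continuous, so $\mu_n^{(j)}\Rightarrow\mu^{(j)}$, and $\mu^{(j)}\in\mathscr{P}_\mathbb{T}^\times$ by the very definition of $\mathscr{P}_{\mathbb{T}^2}^\times$. For (ii), I first note $\Sigma_\mu(0,0)=m_{1,1}(\mu)/[m(\mu^{(1)})m(\mu^{(2)})]\neq0$ since $m_{1,1}(\mu)\neq0$. To obtain the convergence $\Sigma_{\mu_n}\to\Sigma_\mu$ I would combine two ingredients. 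Since weak convergence is convergence in moments, expanding $H_{\mu_n}$ and $\psi_{\mu_n}$ in (generalized) power series on each connected component of $(\mathbb{C}\setminus\mathbb{T})^2$ and using that these families are uniformly bounded on compact subsets, Vitali's theorem yields $H_{\mu_n}\to H_\mu$ and $\psi_{\mu_n}\to\psi_\mu$ locally uniformly. Meanwhile, by the 1-D theory of \cite{BerVoi} recalled above, $\mu_n^{(j)}\Rightarrow\mu^{(j)}$ produces a common $D_r\cup\Delta_r$ on which $\eta_{\mu_n^{(j)}}^{-1}\to\eta_{\mu^{(j)}}^{-1}$ locally uniformly. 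Writing $\Sigma_{\mu_n}=f_n/g_n$ on each component of $\Omega_r$ as in Section 2.3, with $g$ the corresponding zero-free denominator (this is exactly how $\Omega_r$ was chosen), on any compact $K\subset\Omega_r$ one has $|g|\geq\delta>0$, hence $|g_n|\geq\delta/2$ for large $n$; composing the two locally uniform limits then gives $f_n/g_n\to f/g=\Sigma_\mu$ uniformly on $K$. The same comparison proves the \emph{Moreover} clause, since $\Sigma_{\mu_n}\to\Sigma_\mu$ locally uniformly while $\Sigma_{\mu_n}\to\Sigma$ pointwise forces $\Sigma=\Sigma_\mu$ on $\Omega_r$.

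For sufficiency, assume (i) and (ii), and write $\nu_j\in\mathscr{P}_\mathbb{T}^\times$ for the weak limit of $\mu_n^{(j)}$. By Prohorov's theorem $\{\mu_n\}$ is tight, so every subsequence has a further subsequence $\mu_{n_k}\Rightarrow\mu$ for some $\mu\in\mathscr{P}_{\mathbb{T}^2}$. For any such limit, continuity of projection forces $\mu^{(j)}=\nu_j\in\mathscr{P}_\mathbb{T}^\times$; moreover, from $m_{1,1}(\mu_{n_k})=\Sigma_{\mu_{n_k}}(0,0)\,m(\mu_{n_k}^{(1)})m(\mu_{n_k}^{(2)})$ and letting $k\to\infty$ (moment convergence, together with (ii) at the origin and condition (i)) I obtain $m_{1,1}(\mu)=\Sigma(0,0)\,m(\nu_1)m(\nu_2)\neq0$, so $\mu\in\mathscr{P}_{\mathbb{T}^2}^\times$. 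Now the necessity direction applies to $\mu_{n_k}\Rightarrow\mu$ and gives $\Sigma_{\mu_{n_k}}\to\Sigma_\mu$ locally uniformly; comparing with the pointwise limit in (ii) yields $\Sigma_\mu=\Sigma$ on a common product set. Thus any two subsequential limits share the marginals $\nu_1,\nu_2$ and the transform $\Sigma$, so Proposition 2.5 forces them to coincide. Since all subsequential limits equal a single $\mu\in\mathscr{P}_{\mathbb{T}^2}^\times$, the full sequence converges, $\mu_n\Rightarrow\mu$.

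The main obstacle lies in the sufficiency direction, and the point of the argument above is to \emph{avoid} proving that $\{\Sigma_{\mu_n}\}$ is a normal family on $\Omega_r$. A direct approach would try to promote the pointwise convergence in (ii) to locally uniform convergence via Vitali and then pass to the limit in the inversion formula of Proposition 2.5 to recover $\psi_{\mu_n}\to\psi_\mu$; but local boundedness of $\Sigma_{\mu_n}$ away from the distinguished points $(0,0),(0,\infty),(\infty,0),(\infty,\infty)$ requires bounding the denominators $H_{\mu_n}(\eta_{\mu_n^{(1)}}^{-1},\eta_{\mu_n^{(2)}}^{-1})$ from below, and these depend on the full joint laws $\mu_n$, which are not yet known to converge. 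Routing through Prohorov's theorem and the injectivity in Proposition 2.5 dissolves this difficulty, since the only inputs actually needed from (ii) are the value $\Sigma(0,0)\neq0$, which places every subsequential limit in $\mathscr{P}_{\mathbb{T}^2}^\times$, and the identity $\Sigma_\mu=\Sigma$, which is delivered for free by the necessity direction.
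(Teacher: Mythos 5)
Your proposal is correct and follows essentially the same route as the paper: necessity via uniform boundedness/normal-family arguments giving locally uniform convergence of $H_{\mu_{n}}$, $\psi_{\mu_{n}}$, and $\eta_{\mu_{n}^{(j)}}^{-1}$ together with control of the denominators of $\Sigma_{\mu_{n}}$, and sufficiency via Prohorov's theorem plus the injectivity of Proposition 2.5 applied to subsequential limits. The only (harmless) variation is that you place the subsequential limits in $\mathscr{P}_{\mathbb{T}^{2}}^{\times}$ by the direct moment identity $m_{1,1}(\mu)=\Sigma(0,0)\,m(\nu_{1})m(\nu_{2})$, whereas the paper reaches the same conclusion by rerunning the first half of the proof for measures with nonzero marginal means.
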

\begin{proof}
Suppose we have $\mu_{n}\Rightarrow\mu$ with $\mu\in\mathscr{P}_{\mathbb{T}^{2}}^{\times}$.
The weak convergence $\mu_{n}\Rightarrow\mu$ already implies the
pointwise convergences $H_{\mu_{n}}\rightarrow H_{\mu}$, $\psi_{\mu_{n}}\rightarrow\psi_{\mu}$
in $(\mathbb{C}\setminus\mathbb{T})^{2}$. The marginal weak convergence
is a consequence of the continuity of the projection $\pi_{j}$. Denote by $\Omega_{r}$ the domain of definition for the limiting
transform $\Sigma_{\mu}$. By choosing a smaller $r$ if necessary, we may also assume that all $\eta_{\mu_{n}^{(j)}}^{-1}$ are defined on the projected image $\pi_{j}\left(\Omega_{r}\right)$ for $j=1,2$. We
claim that $\Omega_{r}$ is the universal domain we are looking for
and the pointwise convergence $\Sigma_{\mu_{n}}\rightarrow\Sigma_{\mu}$
holds in $\Omega_{r}$. 

Toward this end, we first establish the local uniform convergence
for $H_{\mu_{n}}$ and $\psi_{\mu_{n}}$. Fix an arbitrary $r_{1}\in(0,1)$
and denoting $\Omega_{r_{1}}=(D_{r_{1}}\cup\Delta_{r_{1}})\times(D_{r_{1}}\cup\Delta_{r_{1}})$,
we observe that 
\[
\left|\partial_{z}H_{\mu_{n}}(z,w)\right|=\left|\int_{\mathbb{T}^{2}}\frac{s\, d\mu_{n}(s,t)}{(1-zs)^{2}(1-wt)}\right|<\frac{1}{(1-r_{1})^{3}},\qquad(z,w)\in\Omega_{r_{1}},\quad n\geq1.
\]
Likewise, the complex partial derivative $\partial_{w}H_{\mu_{n}}$
is uniformly bounded by $(1-r_{1})^{-3}$ in the product set $\Omega_{r_{1}}$.
Hence the operator norm of the complex differential $DH_{\mu_{n}}(z,w)$
is uniformly bounded by $\sqrt{2}(1-r_{1})^{-3}$ for all $n$ and
$(z,w)\in\Omega_{r_{1}}$. After applying the mean value theorem for
$\mathbb{R}^{4}\simeq\mathbb{C}^{2}$ to $H_{\mu_{n}}$, we conclude
that the sequence $\{H_{\mu_{n}}\}_{n=1}^{\infty}$ is equicontinuous
in $\Omega_{r_{1}}$.

On the other hand, we have the uniform bound $\left|H_{\mu_{n}}\right|\leq(1-r_{1})^{-2}$
in $\Omega_{r_{1}}$ for all $n\geq1$. Together with the equicontinuity, this shows that the family $\{H_{\mu_{n}}\}_{n=1}^{\infty}$
of holomorphic maps is in fact pre-compact in the topology of local
uniform convergence. Since the pointwise limit function $H_{\mu}$
is already holomorphic, any other holomorphic limit of $\{H_{\mu_{n}}\}_{n=1}^{\infty}$
must coincide with $H_{\mu}$ in $\Omega_{r_{1}}$. This proves the
local uniform convergence $H_{\mu_{n}}\rightarrow H_{\mu}$ in $\Omega_{r_{1}}$.
Since $(\mathbb{C}\setminus\mathbb{T})^{2}=\cup_{0<r_{1}<1}\Omega_{r_{1}}$,
this convergence actually holds locally uniformly in $(\mathbb{C}\setminus\mathbb{T})^{2}$.

The local uniform convergence $\psi_{\mu_{n}}\rightarrow\psi_{\mu}$
in $(\mathbb{C}\setminus\mathbb{T})^{2}$ follows from the reflection
symmetry $H_{\mu_{n}}(z,w)=\overline{\psi_{\mu_{n}}(1/\overline{z},1/\overline{w})}$.

We now argue that $\Sigma_{\mu_{n}}$ is well-defined on $\Omega_{r}$
for sufficiently large $n$. As in the beginning of this section,
we view $\Sigma_{\mu_{n}}$ as a quotient of holomorphic functions
and consider the denominators 
\[
g_{n}(z,w)=H_{\mu_{n}}\left(\eta_{\mu_{n}^{(1)}}^{-1}(z),\eta_{\mu_{n}^{(2)}}^{-1}(w)\right)
\]
and 
\[
G_{n}(z,w)=w\overline{\eta_{\mu_{n}^{(2)}}^{-1}(1/\overline{w})}\cdot\int_{\mathbb{T}^{2}}\frac{1}{(1-\eta_{\mu_{n}^{(1)}}^{-1}(z)s)(\overline{\eta_{\mu_{n}^{(2)}}^{-1}(1/\overline{w})}-t)}\, d\mu_{n}(s,t)
\]
in the components $ $$D_{r}\times D_{r}$ and $ $$D_{r}\times\Delta_{r}$,
respectively. Note that the local uniform convergence $H_{\mu_{n}}\rightarrow H_{\mu}$
in $(\mathbb{C}\setminus\mathbb{T})^{2}$ and that of $\eta_{\mu_{n}^{(j)}}^{-1}\rightarrow\eta_{\mu^{(j)}}^{-1}$
near the origin imply that the two convergences \[\lim_{n\rightarrow\infty}g_{n}(z,w)=H_{\mu}\left(\eta_{\mu^{(1)}}^{-1}(z),\eta_{\mu^{(2)}}^{-1}(w)\right)\]
and \[\lim_{n\rightarrow\infty}G_{n}(z,w)=wH_{\mu}\left(\eta_{\mu^{(1)}}^{-1}(z),\eta_{\mu^{(2)}}^{-1}(w)\right)\]
are uniform for $(z,w)$ in $D_{r}\times D_{r}$ and in $ $$D_{r}\times\Delta_{r}$. In particular, the denominators $g_{n}$, $G_{n}$ will
not vanish in these components when $n$ is sufficiently large, showing
that $\Sigma_{\mu_{n}}$ is well-defined there. The case of the other
two components $ $$\Delta_{r}\times\Delta_{r}$, $\Delta_{r}\times D_{r}$
is proved in the same way. 

If $(z,w)\in\Omega_{r}$, the pointwise limit 
\[
\lim_{n\rightarrow\infty}\Sigma_{\mu_{n}}(z,w)=\Sigma_{\mu}(z,w)
\]
follows from the local uniform convergences $H_{\mu_{n}}\rightarrow H_{\mu}$,
$\psi_{\mu_{n}}\rightarrow\psi_{\mu}$, and $\eta_{\mu_{n}^{(j)}}^{-1}\rightarrow\eta_{\mu^{(j)}}^{-1}$. 

Conversely, assume the weak convergence of the two marginals $\{\mu_{n}^{(1)}\}_{n=1}^{\infty}$
and $\{\mu_{n}^{(2)}\}_{n=1}^{\infty}$ in $\mathscr{P}_{\mathbb{T}}^{\times}$
and the pointwise convergence $\Sigma_{\mu_{n}}\rightarrow\Sigma$ in some universal domain of definition $\Omega_{r}$. Let $\mu$ and
$\mu^{\prime}$ be any two probability weak limit points for the sequence
$\{\mu_{n}\}_{n=1}^{\infty}$. Clearly, the measures $\mu$ and $\mu^{\prime}$
must have the same marginal laws, implying that their marginal means
are nonzero and $\Sigma_{\mu}$, $\Sigma_{\mu^{\prime}}$ are at least
defined in a neighborhood of $(0,0)$. The first part of the proof
shows that $\Sigma_{\mu}(0,0)=\Sigma_{\mu^{\prime}}(0,0)=\Sigma(0,0)\neq 0$.
Thus, the measures $\mu$, $\mu^{\prime}$ actually belong to the class $\mathscr{P}_{\mathbb{T}^{2}}^{\times}$ and hence both $\Sigma_{\mu}$ and $\Sigma_{\mu^{\prime}}$ are defined in $\Omega_{r}$, with $\Sigma_{\mu}=\Sigma=\Sigma_{\mu^{\prime}}$.
Proposition 2.5 then implies $\mu=\mu^{\prime}$, whence $\mu_{n}\Rightarrow\mu$. 
\end{proof}

\section{limit theorems}

We consider an infinite array $\{\mu_{nk}\}_{n,k}=\{\mu_{nk}:n\geq1,1\leq k\leq k_{n}\}$
in $\mathscr{P}_{\mathbb{T}^{2}}$ satisfying (\ref{eq:1.1}) and
a sequence $\lambda_{n}\in\mathbb{T}^{2}$. The condition (\ref{eq:1.1})
implies that the marginal laws are infinitesimal over $\mathbb{T}$,
that is, 
\[
\lim_{n\rightarrow\infty}\max_{1\leq k\leq k_{n}}\mu_{nk}^{(j)}(\{x\in\mathbb{T}:|x-1|\geq\varepsilon\})=0,\qquad\varepsilon>0,\; j=1,2.
\]
The goal of this section is to study the weak convergence of the measures
\[
\delta_{\lambda_{n}}\boxtimes\boxtimes\mu_{n1}\boxtimes\boxtimes\mu_{n2}\boxtimes\boxtimes\cdots\boxtimes\boxtimes\mu_{nk_{n}},\qquad n\geq1.
\]

Since infinitesimal measures fall into the class $\mathscr{P}_{\mathbb{T}^{2}}^{\times}$
eventually, we may assume further that the array $\{\mu_{nk}\}_{n,k}$
is already in the class $\mathscr{P}_{\mathbb{T}^{2}}^{\times}$,
so that the corresponding $\Sigma$-transforms are all well-defined,
giving us a ground to do analysis with them. 

Recall that a $\Sigma$-transform is invariant under arbitrary marginal
rotation. The key to proving our limit theorems lies in the fact that
there exists at least one such rotation allowing us to apply the free
limit theorems from \cite{BerJC} to the bi-free case. 

Thus, we follow \cite{BerJC} and introduce the complex numbers 
\[
b_{nk}^{(j)}=\exp\left(i\int_{|\arg x|<\varepsilon}\arg x\, d\mu_{nk}^{(j)}(x)\right),\qquad j=1,2,
\]
where the parameter $\varepsilon\in(0,1]$ will be arbitrary but fixed
in our discussions and $\arg x\in(-\pi,\pi]$ denotes the principal
value of the argument of $x$. Define a probability measure $\nu_{nk}=\delta_{(1/b_{nk}^{(1)},1/b_{nk}^{(2)})}\boxtimes\boxtimes\mu_{nk}$,
so that 
\begin{equation}
\nu_{nk}^{(j)}=\delta_{1/b_{nk}^{(j)}}\boxtimes\mu_{nk}^{(j)},\qquad j=1,2.\label{eq:3.1}
\end{equation}

One has $\lim_{n\rightarrow\infty}\max_{1\leq k\leq k_{n}}\left|b_{nk}^{(j)}-1\right|=0$
for $j=1,2$, and the resulting array $\{\nu_{nk}\}_{n,k}$ is also
infinitesimal over $\mathbb{T}^{2}$. Accordingly, we introduce the
holomorphic function
\[
h_{nk}^{(j)}(z)=\int_{\mathbb{T}}\frac{(1-z)(1-x)}{1-zx}\, d\nu_{nk}^{(j)}(x),\qquad z\in\mathbb{C}\setminus\mathbb{T}.
\]
These auxiliary functions satisfy the following properties (cf. \cite{BerJC}): 
\begin{enumerate}
\item[(\textbf{P1})] One has $\Re h_{nk}^{(j)}>0$ in $\mathbb{D}$; $\Re h_{nk}^{(j)}(z)=0$
for some $z\in\mathbb{D}$ if and only if $\nu_{nk}^{(j)}=\delta_{1}$.
\item[(\textbf{P2})] The symmetry 
\[
h_{nk}^{(j)}(z)=-\overline{h_{nk}^{(j)}(1/\overline{z})},\qquad z\in\mathbb{C}\setminus\mathbb{T},
\]
shows that the values of $h_{nk}^{(j)}$ on $\Delta_{1}$ are completely
determined by that of $h_{nk}^{(j)}$ on $\mathbb{D}$. 
\item[(\textbf{P3})] To each $r\in(0,1)$ and the cutoff constant $\varepsilon$,
there exists a constant $M=M(r,\varepsilon)>0$ such that 
\[
|\Im h_{nk}^{(j)}(z)|\leq M|\Re h_{nk}^{(j)}(z)|,\qquad z\in D_{r}\cup\Delta_{r},\; n\geq1,\;1\leq k\leq k_{n},\; j=1,2.
\]

\item[(\textbf{P4})] Given $r\in(0,1)$, the two approximations 
\[
h_{nk}^{(j)}(z)=o(1),\quad\eta_{\nu_{nk}^{(j)}}^{-1}(z)/z-1=h_{nk}^{(j)}(z)[1+o(1)]\qquad(n\rightarrow\infty)
\]
hold uniformly for $z\in D_{r}\cup\Delta_{r}$ and for $1\leq k\leq k_{n}$.
\end{enumerate}
We first derive some useful estimates. Note that the estimate (2)
below has already appeared in \cite{BerSerban,BerJC} and is a direct
consequence of (\textbf{P4}). We will verify (1) and (3) only. 
\begin{prop}
Given an infinitesimal array $\{\mu_{nk}\}_{n,k}\subset\mathscr{P}_{\mathbb{T}^{2}}^{\times}$
and $r\in(0,1)$, we have:
\begin{enumerate}
\item 
\[
\lim_{n\rightarrow\infty}\max_{1\leq k\leq k_{n}}\sup_{(z,w)\in\Omega_{r}}\left|\frac{(1-z)(1-w)}{zw}\psi_{\mu_{nk}}(z,w)-1\right|=0.
\]

\item For sufficiently large $n$, the inverse $\eta_{\mu_{nk}^{(j)}}^{-1}$
is defined in the set $D_{r}\cup\Delta_{r}$ and 
\[
\lim_{n\rightarrow\infty}\max_{1\leq k\leq k_{n}}\sup_{z\in D_{r}\cup\Delta_{r}}\left|\eta_{\mu_{nk}^{(j)}}^{-1}(z)-z\right|=0,\quad j=1,2.
\]

\item For sufficiently large $n$, the map $\Sigma_{\mu_{nk}}$ and its
principal logarithm $\log\Sigma_{\mu_{nk}}$ are defined in the product
set $\Omega_{r}$ and satisfies 
\begin{eqnarray*}
\log\Sigma_{\mu_{nk}}(z,w) & = & (1-zw)\left\{ h_{nk}^{(1)}(z)\varepsilon_{nk}^{(1)}(z,w)+h_{nk}^{(2)}(w)\varepsilon_{nk}^{(2)}(z,w)\right.\\
 &  & \left.+\left[\int_{\mathbb{T}^{2}}\frac{(1-s)(1-t)}{(1-zs)(1-wt)}\, d\nu_{nk}(s,t)\right][1+\varepsilon_{nk}^{(3)}(z,w)]\right\} 
\end{eqnarray*}
for $(z,w)\in\Omega_{r}$, where 
\[
\lim_{n\rightarrow\infty}\max_{1\leq k\leq k_{n}}\sup_{(z,w)\in\Omega_{r}}\left|\varepsilon_{nk}^{(j)}(z,w)\right|=0,\qquad j=1,2,3.
\]

\end{enumerate}
\end{prop}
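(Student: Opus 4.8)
\emph{Parts (1) and (2).} For (1) the plan is to write the quantity as an integral of a product kernel: with $k(z,s)=\frac{(1-z)s}{1-zs}$ one has
\[
\frac{(1-z)(1-w)}{zw}\psi_{\mu_{nk}}(z,w)=\int_{\mathbb{T}^{2}}k(z,s)\,k(w,t)\,d\mu_{nk}(s,t),
\]
and $k(z,1)=1$, $k(z,s)-1=\frac{s-1}{1-zs}$. Since $|s|=1$ and $|1-zs|\geq 1-r$ on $D_{r}$ while $|1-zs|\geq|z|-1>\frac1r-1$ on $\Delta_{r}$, I would record the uniform bounds $|k(z,s)|\leq\frac{1+r}{1-r}$ and $|k(z,s)-1|\leq\frac{|s-1|}{1-r}$ on $\Omega_{r}$, giving $|k(z,s)k(w,t)-1|\leq C(|s-1|+|t-1|)$ for a constant $C=C(r)$. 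Splitting the integral over $\{|s-1|+|t-1|<\delta\}$ and its complement, the first part is $\leq C\delta$ and the second is $\leq(1+C^{2})\,\mu_{nk}(\{|s-1|+|t-1|\geq\delta\})$; taking $\delta$ small and then $n$ large, \eqref{eq:1.1} yields (1). For (2) I would simply invoke (\textbf{P4}): the multiplicative estimate $\eta_{\nu_{nk}^{(j)}}^{-1}(z)/z-1=h_{nk}^{(j)}(z)[1+o(1)]=o(1)$ is transported to $\mu_{nk}^{(j)}$ through the rotation identity $\eta_{\mu_{nk}^{(j)}}^{-1}=(1/b_{nk}^{(j)})\,\eta_{\nu_{nk}^{(j)}}^{-1}$ (a consequence of $z\eta_{\delta_{\lambda}\boxtimes\sigma}^{-1}(z)=\eta_{\delta_{\lambda}}^{-1}(z)\eta_{\sigma}^{-1}(z)$ with $\eta_{\delta_{\lambda}}^{-1}(z)=z/\lambda$), exactly as in \cite{BerSerban,BerJC}.

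\emph{Setup for (3).} The key first move is to pass from $\mu_{nk}$ to the centered measure $\nu=\nu_{nk}$: since $\Sigma_{\delta_{\lambda}}\equiv1$ and $\Sigma$ is multiplicative (Corollary 2.9), one has $\Sigma_{\mu_{nk}}=\Sigma_{\nu_{nk}}$. Writing $Z=\eta_{\nu^{(1)}}^{-1}(z)$, $W=\eta_{\nu^{(2)}}^{-1}(w)$, the relations $\eta_{\nu^{(j)}}(Z)=z$ give $\psi_{\nu^{(1)}}(Z)=\frac{z}{1-z}$, $\psi_{\nu^{(2)}}(W)=\frac{w}{1-w}$, whence $\psi_{\nu^{(1)}}(Z)+\psi_{\nu^{(2)}}(W)+1=\frac{1-zw}{(1-z)(1-w)}$. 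Feeding this into the definition of $\Sigma$ and \eqref{eq:2.5}, a short computation should produce the exact identity
\[
\Sigma_{\nu}(z,w)-1=\frac{(1-zw)\,D}{zw\,H_{\nu}(Z,W)},\qquad D=\psi_{\nu}(Z,W)-\psi_{\nu^{(1)}}(Z)\,\psi_{\nu^{(2)}}(W).
\]
Parts (1) and (2) ensure $\Sigma_{\nu}$ is defined on $\Omega_{r}$ for large $n$ and that $\Sigma_{\nu}\to1$ uniformly, so the principal branch of $\log\Sigma_{\nu}$ exists and $\log\Sigma_{\nu}=(\Sigma_{\nu}-1)[1+o(1)]$ uniformly on $\Omega_{r}$; this reduces (3) to analyzing $\Sigma_{\nu}-1$.

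\emph{Expanding $D$.} I would recognize $D$ as a covariance: with $\phi_{1}(s)=\frac{Zs}{1-Zs}$, $\phi_{2}(t)=\frac{Wt}{1-Wt}$ and $p_{j}=\int\phi_{j}\,d\nu^{(j)}$ one has $D=\int(\phi_{1}-p_{1})(\phi_{2}-p_{2})\,d\nu$. Subtracting and adding the values $\phi_{j}(1)$ and using $\phi_{1}(s)-\phi_{1}(1)=\frac{-Z(1-s)}{(1-Z)(1-Zs)}$ together with $p_{1}-\phi_{1}(1)=-\frac{Z\,h_{nk}^{(1)}(Z)}{(1-Z)^{2}}$ (since $\int\frac{1-s}{1-Zs}\,d\nu^{(1)}=\frac{h_{nk}^{(1)}(Z)}{1-Z}$), and likewise in the second variable, one obtains
\[
D=\frac{ZW}{(1-Z)(1-W)}\int_{\mathbb{T}^{2}}\frac{(1-s)(1-t)}{(1-Zs)(1-Wt)}\,d\nu(s,t)-\frac{ZW\,h_{nk}^{(1)}(Z)\,h_{nk}^{(2)}(W)}{(1-Z)^{2}(1-W)^{2}}.
\]
Substituting into the exact identity and replacing each scale-invariant ratio $\frac{Z}{z}$, $\frac{W}{w}$, $\frac{1-z}{1-Z}$, $\frac{1-zs}{1-Zs}$, $\frac{h_{nk}^{(j)}(Z)}{h_{nk}^{(j)}(z)}$, and $(1-z)(1-w)H_{\nu}(Z,W)$ by $1+o(1)$, the first summand becomes $(1-zw)\big[\int\frac{(1-s)(1-t)}{(1-zs)(1-wt)}\,d\nu\big][1+\varepsilon_{nk}^{(3)}]$ and the second becomes $(1-zw)\,h_{nk}^{(1)}(z)h_{nk}^{(2)}(w)[1+o(1)]$; since $h_{nk}^{(2)}(w)=o(1)$, the latter is absorbed into $(1-zw)[h_{nk}^{(1)}(z)\varepsilon_{nk}^{(1)}+h_{nk}^{(2)}(w)\varepsilon_{nk}^{(2)}]$. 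Carrying the factor $1+o(1)$ from $\log\Sigma_{\nu}=(\Sigma_{\nu}-1)[1+o(1)]$ then delivers the claimed formula.

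\emph{The main obstacle.} The real difficulty is that every error term must vanish uniformly on \emph{all} of $\Omega_{r}$, including the unbounded components where $|z|,|w|\to\infty$; there the additive estimate $\eta^{-1}(z)-z\to0$ genuinely fails (consider pure rotations), so all approximations must be routed through scale-invariant quantities. The ratio $\frac{Z}{z}=1+h_{nk}^{(1)}(z)[1+o(1)]$ is uniformly $1+o(1)$ by (\textbf{P4}), and $\frac{z}{1-z}$, $\frac{1}{1-z}$ are bounded on $\Omega_{r}$. The essential compensation is that the covariance integral decays like $1/|zw|$ on $\Delta_{r}$, so that the blow-up of $(1-z)(1-w)$ is absorbed and $(1-z)(1-w)D$ stays uniformly small; quantitatively this rests on the Cauchy--Schwarz bound
\[
\int_{\mathbb{T}^{2}}|1-s||1-t|\,d\nu_{nk}(s,t)\leq\Big(\int_{\mathbb{T}}|1-s|^{2}d\nu_{nk}^{(1)}\Big)^{1/2}\Big(\int_{\mathbb{T}}|1-t|^{2}d\nu_{nk}^{(2)}\Big)^{1/2}=o(1),
\]
the right side tending to zero uniformly in $k$ by infinitesimality. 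The one new uniform estimate I would need is $h_{nk}^{(j)}(Z)=h_{nk}^{(j)}(z)[1+o(1)]$, which I would get from $h_{nk}^{(1)}(Z)-h_{nk}^{(1)}(z)=(z-Z)\int\frac{(1-s)^{2}}{(1-Zs)(1-zs)}\,d\nu^{(1)}$ together with $z-Z=-z\,h_{nk}^{(1)}(z)[1+o(1)]$ and the decay of the remaining integral. Packaging all of these into exactly three uniformly vanishing functions $\varepsilon_{nk}^{(1)},\varepsilon_{nk}^{(2)},\varepsilon_{nk}^{(3)}$ is the remaining bookkeeping.
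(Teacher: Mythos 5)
Your parts (1) and (2) are fine and essentially match the paper: the product-kernel bound $|k(z,s)k(w,t)-1|\leq C(|s-1|+|t-1|)$ followed by the $\delta$-splitting is the same decomposition the paper uses (written there as the sum of two marginal integrals plus a product integral), and for (2) the paper itself only cites \cite{BerSerban,BerJC} and (\textbf{P4}). You are right to insist on the scale-invariant form $\eta^{-1}(z)/z-1=o(1)$ on the unbounded components; that is the estimate actually used downstream. For part (3) you take a genuinely different route. The paper substitutes the identity $\frac{\eta_{1nk}^{-1}(z)s}{1-\eta_{1nk}^{-1}(z)s}=\frac{zs}{1-zs}\bigl[1+\frac{\eta_{1nk}^{-1}(z)/z-1}{1-\eta_{1nk}^{-1}(z)s}\bigr]$ into $\psi_{nk}\left(\eta_{1nk}^{-1}(z),\eta_{2nk}^{-1}(w)\right)$ and expands $u_{nk}$ into four integrals, arranged so that $\int\frac{(1-s)(1-t)}{(1-zs)(1-wt)}\,d\nu_{nk}$ appears with coefficient exactly $1$ and all approximations are confined to terms that are multiples of $h_{nk}^{(j)}$. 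You instead derive the exact identity $\Sigma_{\nu}-1=\frac{(1-zw)D}{zwH_{\nu}(Z,W)}$ with $D$ a covariance, and expand $D$ around $\phi_{j}(1)$; your identity for $D$ is correct (the cross terms cancel because $\int(\phi_{j}-\phi_{j}(1))\,d\nu=p_{j}-\phi_{j}(1)$), and this is arguably a cleaner algebraic starting point. The price is that your ``main'' integral comes out evaluated at $(Z,W)$ rather than at $(z,w)$, which creates the one real problem in your write-up.

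Concretely: you cannot factor the pointwise ratio $\frac{(1-zs)(1-wt)}{(1-Zs)(1-Wt)}=1+o(1)$ out of $\int\frac{(1-s)(1-t)}{(1-Zs)(1-Wt)}\,d\nu_{nk}$ as a multiplicative $(1+\varepsilon_{nk}^{(3)})$ of $\int\frac{(1-s)(1-t)}{(1-zs)(1-wt)}\,d\nu_{nk}$; the integrand is complex and may nearly cancel, so a uniformly small pointwise perturbation produces an additive error of size $o(1)\int|1-s||1-t|\,d\nu_{nk}$ that need not be small relative to the target integral. The statement you are proving does not permit an arbitrary additive $o(1)\cdot o(1)$ remainder there: the third term must be the integral times $(1+\varepsilon_{nk}^{(3)})$, and anything else must be of the form $h_{nk}^{(1)}(z)\varepsilon_{nk}^{(1)}+h_{nk}^{(2)}(w)\varepsilon_{nk}^{(2)}$. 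The repair is available with the tools you already list: writing $\frac{1}{1-Zs}=\frac{1}{1-zs}+\frac{(Z-z)s}{(1-Zs)(1-zs)}$ and using $Z-z=zh_{nk}^{(1)}(z)[1+o(1)]$ together with the uniform boundedness of $\frac{z}{(1-Zs)(1-zs)}$ on $\Omega_{r}$ and your Cauchy--Schwarz bound $\int|1-s||1-t|\,d\nu_{nk}=o(1)$, the discrepancy becomes $h_{nk}^{(1)}(z)\cdot o(1)+h_{nk}^{(2)}(w)\cdot o(1)$ and must be routed into the first two slots, not into $\varepsilon_{nk}^{(3)}$. With that correction (and the $h_{nk}^{(j)}(Z)=h_{nk}^{(j)}(z)[1+o(1)]$ estimate you sketch, which is valid), your argument goes through; as written, the claim that the first summand ``becomes'' the integral times $(1+\varepsilon_{nk}^{(3)})$ is the one unjustified step.
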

\begin{proof}
We begin with the identity
\begin{eqnarray*}
\frac{(1-z)(1-w)}{zw}\psi_{\mu_{nk}}(z,w)-1 & = & -\int_{\mathbb{T}}\frac{1-s}{1-zs}\, d\mu_{nk}^{(1)}(s)-\int_{\mathbb{T}}\frac{1-t}{1-wt}\, d\mu_{nk}^{(2)}(t)\\
 &  & +\int_{\mathbb{T}^{2}}\frac{(1-s)(1-t)}{(1-zs)(1-wt)}\, d\mu_{nk}(s,t).
\end{eqnarray*}
Observe that for any $\varepsilon>0$, $(z,w)\in\Omega_{r}$, and
$j=1,2$, we have 
\begin{multline*}
\left|\int_{\mathbb{T}}\frac{1-x}{1-zx}\, d\mu_{nk}^{(j)}(x)\right|\leq\int_{|x-1|<\varepsilon}\frac{|1-x|}{|1-zx|}\, d\mu_{nk}^{(j)}(x)+\int_{|x-1|\geq\varepsilon}\frac{|1-x|}{|1-zx|}\, d\mu_{nk}^{(j)}(x)\\
\leq\frac{\varepsilon}{1-r}+\frac{2}{1-r}\max_{1\leq k\leq k_{n}}\mu_{nk}^{(j)}(\{x\in\mathbb{T}:|x-1|\geq\varepsilon\})
\end{multline*}
and
\begin{multline*}
\left|\int_{\mathbb{T}^{2}}\frac{(1-s)(1-t)}{(1-zs)(1-wt)}\, d\mu_{nk}(s,t)\right|\leq\frac{2}{(1-r)^{2}}\int_{\mathbb{T}}|1-s|\, d\mu_{nk}^{(1)}(s)\\
\leq\frac{2\varepsilon}{(1-r)^{2}}+\frac{2}{(1-r)^{2}}\max_{1\leq k\leq k_{n}}\mu_{nk}^{(1)}(\{s\in\mathbb{T}:|s-1|\geq\varepsilon\}),
\end{multline*}
whence the uniform estimate (1) holds. 

We now focus on the proof of (3). First, note that $\Sigma_{\mu_{nk}}=\Sigma_{\nu_{nk}}$
by rotational invariance. We shall apply the uniform estimates (1)
and (2) to the infinitesimal array $\{\nu_{nk}\}_{n,k}$. 

For notational convenience, we write $\psi_{nk}=\psi_{\nu_{nk}}$,
$\Sigma_{nk}=\Sigma_{\nu_{nk}}=\Sigma_{\mu_{nk}}$, and $\eta_{jnk}^{-1}=\eta_{\nu_{nk}^{(j)}}^{-1}$.
Also, we use the symbol $a_{n}\approx b_{n}$ to indicate $\lim_{n\rightarrow\infty}a_{n}/b_{n}=1$.
We conclude from (1) and (2) that 
\[
\psi_{nk}\left(\eta_{1nk}^{-1}(z),\eta_{2nk}^{-1}(w)\right)\approx\frac{zw}{(1-z)(1-w)}\qquad(n\rightarrow\infty)
\]
uniformly for $1\leq k\leq k_{n}$ and for any $(z,w)\in\Omega_{r}$.
By (\ref{eq:2.5}), we also have the uniform approximation 
\begin{equation}
H_{\nu_{nk}}\left(\eta_{1nk}^{-1}(z),\eta_{2nk}^{-1}(w)\right)\approx\frac{1}{(1-z)(1-w)}\qquad(n\rightarrow\infty)\label{eq:3.2}
\end{equation}
of the same nature. Therefore the transform $\Sigma_{nk}$ is well-defined
in $\Omega_{r}$ for large $n$. 

The preceding approximations also show that the limit $\lim_{n\rightarrow\infty}\Sigma_{nk}=1$
holds uniformly in $k$ and in $\Omega_{r}$. Hence the principal
logarithm $\log\Sigma_{nk}$ of $\Sigma_{nk}$ exists in $\Omega_{r}$
when $n$ is sufficiently large. We will only consider these large
$n$'s from now on. On the other hand, the fact that $\log x\approx(x-1)$
as $x\rightarrow1$ implies 
\[
\log\Sigma_{nk}\approx\left[\Sigma_{nk}-1\right]\qquad(n\rightarrow\infty)
\]
uniformly in $k$ and in the set $\Omega_{r}$. 

We should derive an estimate for $\Sigma_{nk}-1$. As $n\rightarrow\infty$,
observe that 
\begin{eqnarray*}
\Sigma_{nk}(z,w)-1 & = & \frac{\psi_{nk}\left(\eta_{1nk}^{-1}(z),\eta_{2nk}^{-1}(w)\right)-zwH_{\nu_{nk}}\left(\eta_{1nk}^{-1}(z),\eta_{2nk}^{-1}(w)\right)}{zwH_{\nu_{nk}}\left(\eta_{1nk}^{-1}(z),\eta_{2nk}^{-1}(w)\right)}\\
 & \approx & \frac{(1-z)(1-w)}{zw}\left[(1-zw)\psi_{nk}\left(\eta_{1nk}^{-1}(z),\eta_{2nk}^{-1}(w)\right)-\frac{zw(1-zw)}{(1-z)(1-w)}\right]\\
 & = & (1-zw)\left[\frac{(1-z)(1-w)}{zw}\psi_{nk}\left(\eta_{1nk}^{-1}(z),\eta_{2nk}^{-1}(w)\right)-1\right]
\end{eqnarray*}
in $\Omega_{r}$. Note that we have made use of (\ref{eq:2.5}) and
the estimate (\ref{eq:3.2}) in this calculation. 

Introduce 
\[
u_{nk}(z,w)=\frac{(1-z)(1-w)}{zw}\psi_{nk}\left(\eta_{1nk}^{-1}(z),\eta_{2nk}^{-1}(w)\right)-1,
\]
so that $\log\Sigma_{nk}(z,w)\approx(1-zw)u_{nk}(z,w)$ in $\Omega_{r}$.
Next, we use the identities 
\[
\frac{\eta_{1nk}^{-1}(z)s}{1-\eta_{1nk}^{-1}(z)s}=\frac{zs}{1-zs}\left[1+\frac{\eta_{1nk}^{-1}(z)/z-1}{1-\eta_{1nk}^{-1}(z)s}\right]
\]
and 
\[
\frac{\eta_{2nk}^{-1}(w)t}{1-\eta_{2nk}^{-1}(w)t}=\frac{wt}{1-wt}\left[1+\frac{\eta_{2nk}^{-1}(w)/w-1}{1-\eta_{2nk}^{-1}(w)t}\right]
\]
to get 
\begin{eqnarray*}
u_{nk}(z,w) & = & \int_{\mathbb{T}^{2}}\frac{(\eta_{1nk}^{-1}(z)/z-1)(1-z)(1-w)st}{(1-zs)(1-wt)(1-\eta_{1nk}^{-1}(z)s)}\, d\nu_{nk}(s,t)\\
 &  & +\int_{\mathbb{T}^{2}}\frac{(\eta_{2nk}^{-1}(w)/w-1)(1-z)(1-w)st}{(1-zs)(1-wt)(1-\eta_{2nk}^{-1}(w)t)}\, d\nu_{nk}(s,t)\\
 &  & +\int_{\mathbb{T}^{2}}\frac{(\eta_{1nk}^{-1}(z)/z-1)(\eta_{2nk}^{-1}(w)/w-1)(1-z)(1-w)st}{(1-zs)(1-wt)(1-\eta_{1nk}^{-1}(z)s)(1-\eta_{2nk}^{-1}(w)t)}\, d\nu_{nk}(s,t)\\
 &  & +\int_{\mathbb{T}^{2}}\frac{st-1+zs(1-t)+wt(1-s)}{(1-zs)(1-wt)}\, d\nu_{nk}(s,t).
\end{eqnarray*}

The infinitesimality of $\{\nu_{nk}\}_{n,k}$ and the estimate (2)
yields 
\[
\int_{\mathbb{T}^{2}}\frac{(1-z)(1-w)st}{(1-zs)(1-wt)(1-\eta_{1nk}^{-1}(z)s)}\, d\nu_{nk}(s,t)\approx\frac{1}{1-z}\qquad(n\rightarrow\infty).
\]
Combining with the property (\textbf{P4}), we obtain that 
\[
\int_{\mathbb{T}^{2}}\frac{(\eta_{1nk}^{-1}(z)/z-1)(1-z)(1-w)st}{(1-zs)(1-wt)(1-\eta_{1nk}^{-1}(z)s)}\, d\nu_{nk}(s,t)\approx\frac{h_{nk}^{(1)}(z)}{1-z}
\]
as $n\rightarrow\infty$. Similarly, one also has 
\[
\int_{\mathbb{T}^{2}}\frac{(\eta_{2nk}^{-1}(w)/w-1)(1-z)(1-w)st}{(1-zs)(1-wt)(1-\eta_{2nk}^{-1}(w)t)}\, d\nu_{nk}(s,t)\approx\frac{h_{nk}^{(2)}(w)}{1-w}\qquad(n\rightarrow\infty).
\]
For the third integral in the decomposition of $u_{nk}$, we observe
that 
\begin{multline*}
\int_{\mathbb{T}^{2}}\frac{(\eta_{1nk}^{-1}(z)/z-1)(\eta_{2nk}^{-1}(w)/w-1)(1-z)(1-w)st}{(1-zs)(1-wt)(1-\eta_{1nk}^{-1}(z)s)(1-\eta_{2nk}^{-1}(w)t)}\, d\nu_{nk}(s,t)\\
\approx\left[\frac{\eta_{1nk}^{-1}(z)/z-1}{1-z}\right]\frac{\eta_{2nk}^{-1}(w)/w-1}{1-w}=\left[\frac{h_{nk}^{(1)}(z)}{1-z}\right]\cdot o(1)
\end{multline*}
as $n\rightarrow\infty$. Finally, note that
\begin{multline*}
\int_{\mathbb{T}^{2}}\frac{st-1+zs(1-t)+wt(1-s)}{(1-zs)(1-wt)}\, d\nu_{nk}(s,t)\\
=\int_{\mathbb{T}^{2}}\frac{(1-s)(1-t)}{(1-zs)(1-wt)}\, d\nu_{nk}(s,t)-\frac{h_{nk}^{(1)}(z)}{1-z}-\frac{h_{nk}^{(2)}(w)}{1-w}.
\end{multline*}
Since $1/|1-z|<1/(1-r)$ for $z\in D_{r}\cup\Delta_{r}$, we conclude
from these findings that 
\begin{eqnarray*}
u_{nk}(z,w) & \approx & h_{nk}^{(1)}(z)\cdot o(1)+h_{nk}^{(2)}(w)\cdot o(1)\\
 &  & +\int_{\mathbb{T}^{2}}\frac{(1-s)(1-t)}{(1-zs)(1-wt)}\, d\nu_{nk}(s,t)
\end{eqnarray*}
as $n\rightarrow\infty$ uniformly in $k$ and for any $(z,w)\in\Omega_{r}$.
The estimate (3) now follows. 
\end{proof}
We need an elementary fact.
\begin{lem}
Let $\{z_{nk}\}_{n,k}$ and $\{\varepsilon_{nk}\}_{n,k}$ be two triangular
arrays of complex numbers. Suppose that there exists a universal constant
$M>0$, independent of $n$ and $k$, such that 
\[
|\Im z_{nk}|\leq M|\Re z_{nk}|,\qquad n\geq1,\;1\leq k\leq k_{n},
\]
and that $\sup_{n\geq1}\sum_{k=1}^{k_{n}}|\Re z_{nk}|<\infty$ and
$\lim_{n\rightarrow\infty}\max_{1\leq k\leq k_{n}}|\varepsilon_{nk}|=0$.
Then we have $\sum_{k=1}^{k_{n}}z_{nk}\varepsilon_{nk}\rightarrow0$
as $n\rightarrow\infty$. \end{lem}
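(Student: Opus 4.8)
The plan is to reduce the whole statement to a single scalar estimate and then apply a crude triangle-inequality bound. The key observation is that the cone condition $|\Im z_{nk}|\leq M|\Re z_{nk}|$ lets me control the \emph{modulus} of each entry by its real part. Indeed, writing $z_{nk}=\Re z_{nk}+i\,\Im z_{nk}$, one has
\[
|z_{nk}|=\sqrt{(\Re z_{nk})^{2}+(\Im z_{nk})^{2}}\leq\sqrt{1+M^{2}}\,|\Re z_{nk}|,
\]
so that the entire array is dominated, entrywise, by a fixed constant multiple of its real parts.

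With this in hand I would estimate the sum directly. By the triangle inequality and the bound above,
\[
\left|\sum_{k=1}^{k_{n}}z_{nk}\varepsilon_{nk}\right|\leq\left(\max_{1\leq k\leq k_{n}}|\varepsilon_{nk}|\right)\sum_{k=1}^{k_{n}}|z_{nk}|\leq\sqrt{1+M^{2}}\left(\max_{1\leq k\leq k_{n}}|\varepsilon_{nk}|\right)\sum_{k=1}^{k_{n}}|\Re z_{nk}|.
\]
The hypothesis $\sup_{n\geq1}\sum_{k=1}^{k_{n}}|\Re z_{nk}|<\infty$ guarantees that the last sum stays uniformly bounded in $n$ by some constant $C$. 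Hence the right-hand side is at most $C\sqrt{1+M^{2}}\cdot\max_{1\leq k\leq k_{n}}|\varepsilon_{nk}|$.

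Since $\max_{1\leq k\leq k_{n}}|\varepsilon_{nk}|\rightarrow0$ as $n\rightarrow\infty$, this upper bound tends to zero, which gives the conclusion. There is no genuine obstacle here, the argument being purely elementary; the only place where the cone condition is essential is the passage from the uniform summability of $\{|\Re z_{nk}|\}$ to that of $\{|z_{nk}|\}$. Without it the claim would fail in general, since an array whose real parts are summable may have wildly oscillating or unbounded imaginary parts, and then $\sum_{k}|z_{nk}|$ need not stay bounded.
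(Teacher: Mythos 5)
Your argument is correct and is essentially the paper's own proof: both reduce to the bound $\bigl|\sum_{k}z_{nk}\varepsilon_{nk}\bigr|\leq C_{M}\,\max_{k}|\varepsilon_{nk}|\cdot\sup_{n}\sum_{k}|\Re z_{nk}|$, the only cosmetic difference being that you use the constant $\sqrt{1+M^{2}}$ where the paper uses $1+M$. Nothing further is needed.
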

\begin{proof}
This result follows from the following observation:
\[
\left|\sum_{k=1}^{k_{n}}z_{nk}\varepsilon_{nk}\right|\leq(1+M)\cdot\max_{1\leq k\leq k_{n}}|\varepsilon_{nk}|\cdot\sup_{n\geq1}\sum_{k=1}^{k_{n}}|\Re z_{nk}|\rightarrow0\qquad(n\rightarrow\infty).
\]
\end{proof}
Recall that the marginal of the bi-free convolution $\delta_{\lambda_{n}}\boxtimes\boxtimes\mu_{n1}\boxtimes\boxtimes\mu_{n2}\boxtimes\boxtimes\cdots\boxtimes\boxtimes\mu_{nk_{n}}$
is the usual free convolution $\mu_{n}=\delta_{\pi_{j}(\lambda_{n})}\boxtimes\mu_{n1}^{(j)}\boxtimes\mu_{n2}^{(j)}\boxtimes\cdots\boxtimes\mu_{nk_{n}}^{(j)}$.
We now review the limit theorems for $\{\mu_{n}\}_{n=1}^{\infty}$.
First, any weak limit point $\nu_{j}$ of $\{\mu_{n}\}_{n=1}^{\infty}$
must be \emph{$\boxtimes$-infinitely divisible} on $\mathbb{T}$
\cite{BerSerban}. If $\nu_{j}\in\mathscr{P}_{\mathbb{T}}^{\times}$,
then we have 
\[
\eta_{\nu_{j}}^{-1}(z)=\gamma_{j}\, z\,\exp\left(\int_{\mathbb{T}}\frac{1+xz}{1-xz}\, d\sigma_{j}(x)\right),\qquad z\in\mathbb{D},
\]
where $\gamma_{j}\in\mathbb{T}$ and $\sigma_{j}\in\mathscr{M}_{\mathbb{T}}$.
The parameters $\gamma_{j}$ and $\sigma_{j}$ (called the \emph{L\'{e}vy
parameters}) are uniquely associated with the limiting measure $\nu_{j}$.
Indeed, we have 
\[
\gamma_{j}=\frac{|\text{m}(\nu_{j})|}{\text{m}(\nu_{j})},\qquad d\sigma_{j}(e^{-i\theta})=\text{w*-}\lim_{r\uparrow1}\frac{1}{2\pi}\log\frac{|\eta_{\nu_{j}}^{-1}(re^{i\theta})|}{r}\, d\theta.
\]
Conversely, given a pair $(\gamma_{j},\sigma_{j})$ of L\'{e}vy parameters,
the above exponential integral formula determines a unique $\boxtimes$-infinitely
divisible law $\nu_{j}$ in $\mathscr{P}_{\mathbb{T}}^{\times}$.
We shall write $\nu_{j}=\nu_{\boxtimes}^{\gamma_{j},\sigma_{j}}$
to indicate this correspondence. 

Secondly, the sequence $\mu_{n}$ converges weakly to an $\boxtimes$-infinitely
divisible law $\nu_{\boxtimes}^{\gamma_{j},\sigma_{j}}$ on $\mathbb{T}$
if and only if the system 
\[
\begin{cases}
\sum\nolimits _{k=1}^{k_{n}}(1-\Re x)\, d\nu_{nk}^{(j)}\Rightarrow\sigma_{j},\\
\lim_{n\rightarrow\infty}\overline{\pi_{j}(\lambda_{n})}\,\exp\left(-i\sum\nolimits _{k=1}^{k_{n}}\left[\int_{\mathbb{T}}\Im x\, d\nu_{nk}^{(j)}(x)+\arg b_{nk}^{(j)}\right]\right)=\gamma_{j}
\end{cases}
\]
of weak and numerical limits holds. (Here the measure $\nu_{nk}^{(j)}$
refers to the rotation (\ref{eq:3.1}).) 
\begin{lem}
Let $\{\mu_{nk}\}_{n,k}$ be an infinitesimal array in $\mathscr{P}_{\mathbb{T}^{2}}^{\times}$,
and let $\{\nu_{nk}\}_{n,k}$ be the accompanying array defined by
the marginal rotation \emph{(\ref{eq:3.1})}. Suppose that there exists
a sequence $\lambda_{n}\in\mathbb{T}^{2}$ such that 
\[
\delta_{\pi_{j}(\lambda_{n})}\boxtimes\mu_{n1}^{(j)}\boxtimes\mu_{n2}^{(j)}\boxtimes\cdots\boxtimes\mu_{nk_{n}}^{(j)}\Rightarrow\nu_{\boxtimes}^{\gamma_{j},\sigma_{j}},\qquad j=1,2.
\]
Then we have: 
\begin{enumerate}
\item The two sequences $S=\{\sum\nolimits _{k=1}^{k_{n}}(1-\Re s)\, d\nu_{nk}:n\geq1\}$
and $T=\{\sum\nolimits _{k=1}^{k_{n}}(1-\Re t)\, d\nu_{nk}:n\geq1\}$
of positive Borel measures on $\mathbb{T}^{2}$ have uniformly bounded
total variation norms, and the sequence $C=\{\sum\nolimits _{k=1}^{k_{n}}\int_{\mathbb{T}^{2}}\Im s\Im t\, d\nu_{nk}(s,t):n\geq1\}$
of real numbers is bounded. 
\item Let $\Omega_{r}$ be a common domain of definition for all $\Sigma_{\mu_{nk}}$
and $U\subseteq\Omega_{r}$ be an arbitrary open subset. The pointwise
limit 
\[
F(z,w)=\lim_{n\rightarrow\infty}\Pi_{k=1}^{k_{n}}\Sigma_{\mu_{nk}}(z,w)
\]
exists for $(z,w)\in U$ if and only if and the pointwise limit 
\begin{equation}
G(z,w)=\lim_{n\rightarrow\infty}\exp\left(\sum_{k=1}^{k_{n}}\int_{\mathbb{T}^{2}}\frac{(1-zw)(1-s)(1-t)}{(1-zs)(1-wt)}\, d\nu_{nk}(s,t)\right)\label{eq:3.3}
\end{equation}
exists for $(z,w)\in U$. In this case, we also have $F=G$ in $U$. 
\end{enumerate}
\end{lem}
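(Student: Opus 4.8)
The plan is to establish the boundedness statements in (1) first and then use them throughout (2). For (1), note that the total variation norm of the positive measure $\sum_{k=1}^{k_n}(1-\Re s)\,d\nu_{nk}$ equals $\sum_{k=1}^{k_n}\int_{\mathbb{T}}(1-\Re s)\,d\nu_{nk}^{(1)}(s)$, the total mass of $\sum_k(1-\Re s)\,d\nu_{nk}^{(1)}$. Since the hypothesis of the lemma, read through the free limit theorem recalled above, gives the weak convergence $\sum_k(1-\Re x)\,d\nu_{nk}^{(1)}\Rightarrow\sigma_1$, testing against the constant function $1$ shows these masses tend to $\sigma_1(\mathbb{T})<\infty$ and are thus bounded; the same reasoning bounds $\|T_n\|$ using $\sigma_2$. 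For $C$, I would combine the elementary inequality $(\Im s)^2=\sin^2\theta\le 2(1-\cos\theta)=2(1-\Re s)$ with two applications of Cauchy--Schwarz (first inside each integral in $s$ and $t$, then over $k$), yielding
\[
\Big|\sum_k\int\Im s\,\Im t\,d\nu_{nk}\Big|\le 2\Big(\sum_k\int(1-\Re s)\,d\nu_{nk}\Big)^{1/2}\Big(\sum_k\int(1-\Re t)\,d\nu_{nk}\Big)^{1/2}=2\|S_n\|^{1/2}\|T_n\|^{1/2},
\]
which is bounded by the first part.

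For (2), recall $\Sigma_{\mu_{nk}}=\Sigma_{\nu_{nk}}$ and $\Sigma_{\mu_{nk}}\to1$ uniformly on $\Omega_r$, so that for large $n$ the principal logarithm exists and $\prod_k\Sigma_{\mu_{nk}}=\exp(\sum_k\log\Sigma_{\mu_{nk}})$. Summing the expansion in Proposition 3.1(3) over $k$ writes $\sum_k\log\Sigma_{\mu_{nk}}$ as the main term $(1-zw)\sum_k\int\frac{(1-s)(1-t)}{(1-zs)(1-wt)}\,d\nu_{nk}$ plus three error sums, and my aim is to show each error sum tends to $0$ at every fixed $(z,w)\in\Omega_r$. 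The sums $\sum_k h_{nk}^{(j)}\varepsilon_{nk}^{(j)}$ ($j=1,2$) are handled by Lemma 3.2: property (\textbf{P3}) supplies the universal ratio bound $|\Im h_{nk}^{(j)}|\le M|\Re h_{nk}^{(j)}|$ and Proposition 3.1(3) gives $\max_k|\varepsilon_{nk}^{(j)}|\to0$, so it remains to verify $\sup_n\sum_k|\Re h_{nk}^{(j)}(z)|<\infty$. This is where the positivity in (\textbf{P1}) enters: $\Re h_{nk}^{(j)}$ is a nonnegative harmonic function on $\mathbb{D}$, so Harnack's inequality gives
\[
\Re h_{nk}^{(j)}(z)\le\frac{1+|z|}{1-|z|}\,\Re h_{nk}^{(j)}(0)=\frac{1+|z|}{1-|z|}\int_{\mathbb{T}}(1-\Re x)\,d\nu_{nk}^{(j)}(x),\qquad z\in\mathbb{D},
\]
and summing over $k$ and invoking part (1) bounds $\sum_k|\Re h_{nk}^{(j)}(z)|$ on $D_r$; the symmetry (\textbf{P2}) transfers the bound to $\Delta_r$.

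The third error sum $\sum_k[\int\frac{(1-s)(1-t)}{(1-zs)(1-wt)}\,d\nu_{nk}]\varepsilon_{nk}^{(3)}$ is controlled by the same double Cauchy--Schwarz estimate as $C$: bounding $|1-zs|,|1-wt|$ below on $\Omega_r$ and using $|1-s|^2=2(1-\Re s)$ shows that $\sum_k|\int\frac{(1-s)(1-t)}{(1-zs)(1-wt)}\,d\nu_{nk}|$ is bounded uniformly in $n$, and multiplying by $\max_k|\varepsilon_{nk}^{(3)}|\to0$ sends the sum to $0$. Consequently, for each $(z,w)\in\Omega_r$,
\[
\sum_k\log\Sigma_{\mu_{nk}}(z,w)=\log G_n(z,w)+\delta_n(z,w),\qquad\delta_n(z,w)\to0,
\]
where $G_n(z,w)=\exp\big(\sum_k\int_{\mathbb{T}^2}\frac{(1-zw)(1-s)(1-t)}{(1-zs)(1-wt)}\,d\nu_{nk}(s,t)\big)$ is the $n$-th term of the limit defining $G$. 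Exponentiating gives $\prod_k\Sigma_{\mu_{nk}}(z,w)=G_n(z,w)\,e^{\delta_n(z,w)}$ with $e^{\delta_n}\to1$, so $\lim_n\prod_k\Sigma_{\mu_{nk}}$ exists exactly when $\lim_nG_n$ does and the two limits agree; restricting to $U$ gives the stated equivalence together with $F=G$.

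I expect the one genuinely delicate point to be the uniform bound $\sup_n\sum_k|\Re h_{nk}^{(j)}(z)|<\infty$ needed to invoke Lemma 3.2. A naive estimate through $|h_{nk}^{(j)}(z)|\le C_r|1-x|$ only yields control by $\sum_k(\int(1-\Re x)\,d\nu_{nk}^{(j)})^{1/2}$, a sum of square roots that can diverge even when $\sum_k\int(1-\Re x)\,d\nu_{nk}^{(j)}$ stays bounded; it is therefore essential to exploit the positivity of $\Re h_{nk}^{(j)}$ through Harnack's inequality rather than crude modulus bounds. Everything else is a bookkeeping assembly of Proposition 3.1(3), Lemma 3.2, and the two Cauchy--Schwarz estimates.
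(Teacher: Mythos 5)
Your proof is correct and follows essentially the same route as the paper: part (1) via the marginal free limit theorem plus an elementary inequality bounding $|\Im s\,\Im t|$ by $(1-\Re s)$ and $(1-\Re t)$, and part (2) by summing the expansion of Proposition 3.1(3) and killing the error sums with Lemma 3.2 and the uniform bound on $\sum_k|\Re h_{nk}^{(j)}|$. The only cosmetic differences are that you obtain that uniform bound from Harnack's inequality applied to the positive harmonic functions $\Re h_{nk}^{(j)}$, where the paper writes $\Re h_{nk}^{(j)}(z)=\int_{\mathbb{T}}\Re\bigl[\tfrac{1+xz}{1-xz}\bigr](1-\Re x)\,d\nu_{nk}^{(j)}(x)$ and bounds the Poisson kernel directly (the same estimate), and that you use Cauchy--Schwarz where the paper uses $2\sqrt{ab}\le a+b$.
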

\begin{proof}
The fact that $S$ and $T$ have uniformly bounded total variation
norms is rather obvious; for example, the 1-D free limit theorem yields
\[
\int_{\mathbb{T}^{2}}\sum\nolimits _{k=1}^{k_{n}}(1-\Re s)\, d\nu_{nk}(s,t)=\int_{\mathbb{T}}\sum\nolimits _{k=1}^{k_{n}}(1-\Re s)\, d\nu_{nk}^{(1)}(s)\leq2\sigma_{1}(\mathbb{T})
\]
for sufficiently large $n$. Meanwhile, the estimate 
\begin{eqnarray*}
\int_{\mathbb{T}^{2}}|\Im s\Im t|\, d\nu_{nk}(s,t) & = & \int_{\mathbb{T}^{2}}\sqrt{(1+\Re s)(1-\Re s)}\sqrt{(1+\Re t)(1-\Re t)}\, d\nu_{nk}(s,t)\\
 & \leq & \int_{\mathbb{T}^{2}}2\sqrt{1-\Re s}\sqrt{1-\Re t}\, d\nu_{nk}(s,t)\\
 & \leq & \int_{\mathbb{T}^{2}}(1-\Re s)+(1-\Re t)\, d\nu_{nk}(s,t)\\
 & = & \int_{\mathbb{T}}1-\Re s\, d\nu_{nk}^{(1)}(s)+\int_{\mathbb{T}}1-\Re t\, d\nu_{nk}^{(2)}(t)
\end{eqnarray*}
shows that $C$ is a bounded sequence. So, (1) is proved.

We know from Proposition 3.1 (3) that the pointwise convergence 
\[
\Pi_{k=1}^{k_{n}}\Sigma_{\mu_{nk}}(z,w)=\exp\left(\sum\nolimits _{k=1}^{k_{n}}\log\Sigma_{\mu_{nk}}(z,w)\right)\rightarrow F(z,w)
\]
holds in the set $U$ if and only if the pointwise limit 
\begin{equation}
G(z,w)=\lim_{n\rightarrow\infty}\exp\left(\sum\nolimits _{k=1}^{k_{n}}u_{nk}(z,w)\right)\label{eq:3.4}
\end{equation}
exists in $U$, where the function 
\begin{multline*}
u_{nk}(z,w)=(1-zw)\left\{ \sum_{k=1}^{k_{n}}h_{nk}^{(1)}(z)\varepsilon_{nk}^{(1)}(z,w)+\sum_{k=1}^{k_{n}}h_{nk}^{(2)}(w)\varepsilon_{nk}^{(2)}(z,w)\right.\\
\left.+\sum_{k=1}^{k_{n}}\int_{\mathbb{T}^{2}}\frac{(1-s)(1-t)(1+\varepsilon_{nk}^{(3)}(z,w))}{(1-zs)(1-wt)}\, d\nu_{nk}(s,t)\right\} .
\end{multline*}
Of course, we will have $F=G$ in $U$ if any of the two limits holds.

We now seek a re-casting of (\ref{eq:3.4}) into the desired limit
(\ref{eq:3.3}). We first notice that 
\[
\sum_{k=1}^{k_{n}}\Re h_{nk}^{(1)}(z)=\sum_{k=1}^{k_{n}}\int_{\mathbb{T}}\Re\left[\frac{1+sz}{1-sz}\right](1-\Re s)\, d\nu_{nk}^{(1)}(s)\leq\frac{2(1+r)}{1-r}\sigma_{1}(\mathbb{T})
\]
for any $z\in D_{r}$ when $n$ is sufficiently large. Then (\textbf{P2})
implies that 
\[
\sup_{n\geq1}\sum_{k=1}^{k_{n}}\left|\Re h_{nk}^{(1)}(z)\right|<\infty,\qquad z\in D_{r}\cup\Delta_{r}.
\]
Similarly, we also have $\sup_{n\geq1}\sum_{k=1}^{k_{n}}\left|\Re h_{nk}^{(2)}(w)\right|<\infty$
for $w\in D_{r}\cup\Delta_{r}$. By (\textbf{P3}) and Lemma 3.2, we
get 
\[
\lim_{n\rightarrow\infty}\sum_{k=1}^{k_{n}}h_{nk}^{(1)}(z)\varepsilon_{nk}^{(1)}(z,w)=0=\lim_{n\rightarrow\infty}\sum_{k=1}^{k_{n}}h_{nk}^{(2)}(w)\varepsilon_{nk}^{(2)}(z,w)
\]
for $(z,w)\in U$, and so these terms do not make any contribution
to the limit (\ref{eq:3.4}). As a result, (\ref{eq:3.4}) is equivalent
to 
\[
\lim_{n\rightarrow\infty}\exp\left(\sum_{k=1}^{k_{n}}\int_{\mathbb{T}^{2}}\frac{(1-zw)(1-s)(1-t)(1+\varepsilon_{nk}^{(3)}(z,w))}{(1-zs)(1-wt)}\, d\nu_{nk}(s,t)\right)=G(z,w)
\]
in $U$. 

Next, the previous estimate for the sequence $C$ leads to 
\begin{multline*}
\left|\int_{\mathbb{T}^{2}}\frac{(1-s)(1-t)}{(1-zs)(1-wt)}\, d\nu_{nk}(s,t)\right|\\
\leq\frac{4}{(1-r)^{2}}\left\{ \int_{\mathbb{T}}1-\Re s\, d\nu_{nk}^{(1)}(s)+\int_{\mathbb{T}}1-\Re t\, d\nu_{nk}^{(2)}(t)\right\} ,
\end{multline*}
which tells us that 
\begin{multline*}
\left|\sum_{k=1}^{k_{n}}\int_{\mathbb{T}^{2}}\frac{(1-s)(1-t)\varepsilon_{nk}^{(3)}(z,w)}{(1-zs)(1-wt)}\, d\nu_{nk}(s,t)\right|\\
\leq\frac{8\left\{ \sigma_{1}(\mathbb{T})+\sigma_{2}(\mathbb{T})\right\} }{(1-r)^{2}}\max_{1\leq k\leq k_{n}}\sup_{(z,w)\in\Omega_{r}}\left|\varepsilon_{nk}^{(3)}(z,w)\right|\rightarrow 0
\end{multline*}
in $U$ as $n\rightarrow\infty$. Thus, we see that the limit (\ref{eq:3.4})
is eventually re-casted to (\ref{eq:3.3}). 
\end{proof}
We now prove our main result. For the rest of the paper, we introduce
the function 
\[
f(z,w)=\frac{1-zw}{(1-z)(1-w)},\qquad(z,w)\in(\mathbb{C}\setminus\mathbb{T})^{2}.
\]

\begin{thm}
Let $\{\mu_{nk}\}_{n,k}$ be an infinitesimal array in $\mathscr{P}_{\mathbb{T}^{2}}^{\times}$
and let $\{\lambda_{n}\}_{n=1}^{\infty}$ be a sequence of points
on $\mathbb{T}^{2}$. Denote by $\{\nu_{nk}\}_{n,k}$ the accompanying
array of $\{\mu_{nk}\}_{n,k}$ according to \emph{(\ref{eq:3.1})}.
Then the bi-free convolutions 
\[
\mu_{n}=\delta_{\lambda_{n}}\boxtimes\boxtimes\mu_{n1}\boxtimes\boxtimes\mu_{n2}\boxtimes\boxtimes\cdots\boxtimes\boxtimes\mu_{nk_{n}},\qquad n\geq1,
\]
converge weakly to a probability measure in $\mathscr{P}_{\mathbb{T}^{2}}^{\times}$
if and only if there exist two Borel measures $\rho_{1},\rho_{2}\in\mathscr{M}_{\mathbb{T}^{2}}$,
two complex numbers $\gamma_{1},\gamma_{2}\in\mathbb{T}$, and a constant
$a\in\mathbb{R}$ such that the following weak and numerical convergences
\begin{equation}
\begin{cases}
\sum_{k=1}^{k_{n}}(1-\Re s)\, d\nu_{nk}\Rightarrow\rho_{1},\\
\sum_{k=1}^{k_{n}}(1-\Re t)\, d\nu_{nk}\Rightarrow\rho_{2},\\
\lim_{n\rightarrow\infty}\sum_{k=1}^{k_{n}}\int_{\mathbb{T}^{2}}\Im s\Im t\, d\nu_{nk}(s,t)=a,\\
\lim_{n\rightarrow\infty}\overline{\pi_{j}(\lambda_{n})}\,\exp\left(-i\sum\nolimits _{k=1}^{k_{n}}\left[\int_{\mathbb{T}}\Im x\, d\nu_{nk}^{(j)}(x)+\arg b_{nk}^{(j)}\right]\right)=\gamma_{j}, & j=1,2,
\end{cases}\label{eq:3.5}
\end{equation}
hold simultaneously. In this case, if $\mu_{n}\Rightarrow\nu$ then
the limit $\nu$ is determined by the conditions: 

\[
\begin{cases}
\Sigma_{\nu}(z,w)=\exp\left(f(z,w)F_{\rho_{1},\rho_{2},a}(z,w)\right), & (z,w)\in(\mathbb{C}\setminus\mathbb{T})^{2};\\
\nu^{(j)}=\nu_{\boxtimes}^{\gamma_{j},\rho_{j}\circ\pi_{j}^{-1}}, & j=1,2.
\end{cases}
\]
Here the function $F_{\rho_{1},\rho_{2},a}$ is defined by 
\begin{eqnarray*}
F_{\rho_{1},\rho_{2},a}(z,w) & = & \int_{\mathbb{T}^{2}}\frac{1+zs}{1-zs}\frac{1+wt}{1-wt}(1-\Re t)\, d\rho_{1}(s,t)\\
 &  & -i\int_{\mathbb{T}^{2}}\frac{(1+zs)\Im t}{1-zs}\, d\rho_{1}(s,t)-i\int_{\mathbb{T}^{2}}\frac{(1+wt)\Im s}{1-wt}\: d\rho_{2}(s,t)-a.
\end{eqnarray*}
\end{thm}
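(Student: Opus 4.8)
The plan is to reduce the statement to the continuity theorem for the $\Sigma$-transform (Theorem 2.10) and the one-dimensional free limit theorems recalled above, and then to carry out the asymptotic analysis of the exponent supplied by Lemma 3.3. Since $\Sigma_{\delta_{\lambda_n}}\equiv1$ and $\boxtimes\boxtimes$ is multiplicative for the $\Sigma$-transform (Corollary 2.9), one has $\Sigma_{\mu_n}=\prod_{k=1}^{k_n}\Sigma_{\mu_{nk}}$ on a common $\Omega_r$. By Theorem 2.10, $\mu_n$ converges weakly to a measure in $\mathscr{P}_{\mathbb{T}^2}^{\times}$ precisely when (i) both marginal sequences converge in $\mathscr{P}_{\mathbb{T}}^{\times}$ and (ii) $\prod_k\Sigma_{\mu_{nk}}$ converges pointwise on $\Omega_r$ to a limit that is nonzero at the origin. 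I would therefore treat the marginal requirement (i) and the $\Sigma$-requirement (ii) separately, match each to the appropriate part of (3.5), and read off the description of $\nu$.

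For (i), the $j$-th marginal of $\mu_n$ is the free convolution $\delta_{\pi_j(\lambda_n)}\boxtimes\mu_{n1}^{(j)}\boxtimes\cdots\boxtimes\mu_{nk_n}^{(j)}$, so the one-dimensional limit theorem recalled before Lemma 3.3 applies: $\mu_n^{(j)}\Rightarrow\nu_{\boxtimes}^{\gamma_j,\sigma_j}$ iff $\sum_k(1-\Re x)\,d\nu_{nk}^{(j)}\Rightarrow\sigma_j$ together with the numerical limit in the fourth line of (3.5). Pushing the first two lines of (3.5) forward under $\pi_j$ gives $\sum_k(1-\Re x)\,d\nu_{nk}^{(j)}\Rightarrow\rho_j\circ\pi_j^{-1}$, so when (3.5) holds the marginals converge to $\nu_{\boxtimes}^{\gamma_j,\rho_j\circ\pi_j^{-1}}$, which is the claimed $\nu^{(j)}$; conversely, weak convergence of $\mu_n$ forces these one-dimensional statements and hence the fourth line and the projections of the first two. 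The full two-dimensional content of the first three lines is the concern of the $\Sigma$-analysis.

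For (ii), Lemma 3.3 reduces the convergence of $\prod_k\Sigma_{\mu_{nk}}$ to that of $\exp(E_n)$, where $E_n(z,w)=\sum_k\int_{\mathbb{T}^2}(1-zw)(1-s)(1-t)[(1-zs)(1-wt)]^{-1}\,d\nu_{nk}$, the two limits then coinciding. The core is the evaluation of $\lim_n E_n$. I would expand $(1-s)(1-t)=(1-\Re s)(1-\Re t)-i(1-\Re s)\Im t-i(1-\Re t)\Im s-\Im s\Im t$ and integrate term by term. The first three summands each carry a factor $1-\Re s$ or $1-\Re t$, so, as the kernels are bounded and continuous on $\mathbb{T}^2$ for fixed $(z,w)\in\Omega_r$, their sums converge against the weak limits $\rho_1,\rho_2$. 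The delicate summand is $-\Im s\Im t$: one is given only that its total mass tends to the number $a$, not that the signed measures $\sum_k\Im s\Im t\,d\nu_{nk}$ converge weakly. The key observation is that, off the corner $(1,1)$, this contribution is already controlled by $\rho_1$ and $\rho_2$ through the consistency identity $(1-\Re t)\,d\rho_1=(1-\Re s)\,d\rho_2$ (both equal $\lim_n\sum_k(1-\Re s)(1-\Re t)\,d\nu_{nk}$), while its concentration at $(1,1)$ is exactly what the scalar $a$ records. Separating these two pieces, discarding the genuinely higher-order and mismatched cross terms by infinitesimality and Lemma 3.2, and regrouping via the elementary identity $2(1-zs)^{-1}=1+(1+zs)(1-zs)^{-1}$ and the reflection symmetries (2.3)--(2.6) (which is what forces the Poisson kernels $\frac{1+zs}{1-zs}$ and makes the formula valid on all of $(\mathbb{C}\setminus\mathbb{T})^2$), I expect $\lim_n E_n=f(z,w)\,F_{\rho_1,\rho_2,a}(z,w)$. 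Since $f(0,0)F_{\rho_1,\rho_2,a}(0,0)$ is finite, the limiting transform is nonzero at the origin, so (ii) holds and the limit lies in $\mathscr{P}_{\mathbb{T}^2}^{\times}$ with the stated $\Sigma_\nu$.

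The two implications follow. For sufficiency, assuming (3.5), the two preceding paragraphs give (i) and (ii), so Theorem 2.10 yields $\mu_n\Rightarrow\nu$ with the displayed description. For necessity, suppose $\mu_n\Rightarrow\nu\in\mathscr{P}_{\mathbb{T}^2}^{\times}$; the marginals converge, producing $\gamma_1,\gamma_2$ and the projections of $\rho_1,\rho_2$, and by Lemma 3.3(1) the measure sequences in the first two lines of (3.5) are bounded in total variation while the scalar sequence in the third line is bounded, so along a subsequence they converge to some $(\rho_1,\rho_2,a)$, and the computation above forces $\Sigma_\nu=\exp(f\,F_{\rho_1,\rho_2,a})$. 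The main obstacle is to promote this to convergence of the full sequences, that is, to show $(\rho_1,\rho_2,a)$ is uniquely determined; I expect the real work to lie here. I would argue that, with the marginals already fixed and the consistency identity in hand, the holomorphic function $F_{\rho_1,\rho_2,a}$ determines $(\rho_1,\rho_2,a)$ by expanding in the family $\{[(1-zs)(1-wt)]^{-1}\}$ and matching bivariate trigonometric moments, so that any two subsequential limits coincide and the full sequences converge.
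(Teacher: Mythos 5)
Your overall architecture matches the paper's: reduce to Theorem 2.10 via Corollary 2.9, use Lemma 3.3 to replace $\prod_k\Sigma_{\mu_{nk}}$ by the exponential integral, and regroup the integrand so that the three terms carrying a factor $1-\Re s$ or $1-\Re t$ pair against the weak limits $\rho_1,\rho_2$ while the $\Im s\,\Im t$ term pairs with the scalar $a$. One small correction there: after the regrouping you describe, the kernel multiplying $-\Im s\,\Im t$ is the constant $1$ times $f(z,w)$, so the third line of (3.5) is exactly what is needed; no appeal to the identity $(1-\Re t)\,d\rho_1=(1-\Re s)\,d\rho_2$ or to a separate analysis near $(1,1)$ is required (and that identity is not a hypothesis of the theorem). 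The sufficiency direction and the identification of $\Sigma_\nu$ and $\nu^{(j)}$ are essentially right.

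The genuine gap is in the necessity direction, precisely where you say you ``expect the real work to lie'': proving that the subsequential limits $(\rho_1,\rho_2,a)$ are unique. Two obstacles are not addressed. First, the limit only gives $\exp(f\,F_{\rho_1,\rho_2,a})=\exp(f\,F_{\rho_1',\rho_2',a'})$, so on the component $\mathbb{D}^2$ one gets $f\,F=f\,F'+2k\pi i$ for some integer $k$; since $2k\pi i/f=2k\pi i(1-z)(1-w)/(1-zw)$ is holomorphic on $\mathbb{D}^2$, the case $k\neq0$ is not automatically excluded, and because the four components of $(\mathbb{C}\setminus\mathbb{T})^2$ are disjoint, the vanishing of the phase constant on $\mathbb{D}\times\Delta_1$ (where $f\to0$) does not propagate by analytic continuation. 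The paper kills $k$ by comparing the identities at $(z,w)$ and $(z,1/\overline{w})$, taking real parts to produce Poisson kernels, setting $z=0$, and observing that $k\neq 0$ would force $\bigl|\tfrac{k}{2}\int_\varepsilon^{\pi/4}\tfrac{\sin\theta}{1-\cos\theta}\,d\theta\bigr|$ to stay bounded as $\varepsilon\downarrow0$, which is impossible. Second, even granting $k=0$, matching moments only yields $(1-\Re t)\,d\rho_1=(1-\Re t)\,d\rho_1'$ (and symmetrically for $\rho_2$), and this does \emph{not} determine $\rho_1$: the density $1-\Re t$ vanishes on the slice $\{t=1\}$, where the two measures could a priori carry different mass. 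The paper needs the extra input $\rho_1\circ\pi_1^{-1}=\sigma_1=\rho_1'\circ\pi_1^{-1}$ from the one-dimensional limit theorem together with a separate measure-theoretic argument (decomposing a Borel set $E$ into $E\cap\pi_2^{-1}\{1\}$ and its complement) to conclude $\rho_1=\rho_1'$. Your plan of ``matching bivariate trigonometric moments'' does not close either of these steps, so the ``only if'' direction remains incomplete.
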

\begin{proof}
Assume $\mu_{n}\Rightarrow\nu$ for some $\nu\in\mathscr{P}_{\mathbb{T}^{2}}^{\times}$,
and say, $\mu_{n}^{(j)}\Rightarrow\nu^{(j)}=\nu_{\boxtimes}^{\gamma_{j},\sigma_{j}}$
for $j=1,2$. Lemma 3.3 (1) shows that the two sequences $S$ and
$T$ of measures have weak-star limit points in the set $\mathscr{M}_{\mathbb{T}^{2}}$,
and the numerical sequence $C$ has limit points in $\mathbb{R}$.
Let $(\rho_{1},\rho_{2},a)$ and $(\rho_{1}^{\prime},\rho_{2}^{\prime},a^{\prime})$
be any two triples of such subsequential limits. Thus, there exist
two infinite subsets $A$ and $A^{\prime}$ in $\mathbb{N}$ such
that 
\[
\begin{cases}
\sum_{k=1}^{k_{n}}(1-\Re s)\, d\nu_{nk}\Rightarrow\rho_{1},\\
\sum_{k=1}^{k_{n}}(1-\Re t)\, d\nu_{nk}\Rightarrow\rho_{2},
\end{cases}\text{and}\quad\sum_{k=1}^{k_{n}}\int_{\mathbb{T}^{2}}\Im s\Im t\, d\nu_{nk}(s,t)\rightarrow a\quad(n\rightarrow\infty,\: n\in A),
\]
and 
\[
\begin{cases}
\sum_{k=1}^{k_{n}}(1-\Re s)\, d\nu_{nk}\Rightarrow\rho_{1}^{\prime},\\
\sum_{k=1}^{k_{n}}(1-\Re t)\, d\nu_{nk}\Rightarrow\rho_{2}^{\prime},
\end{cases}\text{and}\quad\sum_{k=1}^{k_{n}}\int_{\mathbb{T}^{2}}\Im s\Im t\, d\nu_{nk}(s,t)\rightarrow a^{\prime}\quad(n\rightarrow\infty,\: n\in A^{\prime}).
\]
Our task here is to prove $\rho_{1}=\rho_{1}^{\prime}$, $\rho_{2}=\rho_{2}^{\prime}$,
$a=a^{\prime}$, as well as the integral representation of $\Sigma_{\nu}$.
Note that the convergence to $\gamma_{j}$ in (\ref{eq:3.5}) already
holds by the 1-D free limit theorems.

By Proposition 3.1 (3), we can choose a universal domain of definition
$\Omega_{r}$ for all $\Sigma_{\mu_{nk}}$ and $\Sigma_{\nu}$. Corollary
2.9 and Theorem 2.10 imply the pointwise convergence 
\[
\Pi_{k=1}^{k_{n}}\Sigma_{\mu_{nk}}=\Sigma_{\mu_{n}}\rightarrow\Sigma_{\nu}
\]
in the set $\Omega_{r}$. By virtue of Lemma 3.3 (2), this means 
\[
\lim_{n\rightarrow\infty}\exp\left(\sum_{k=1}^{k_{n}}\int_{\mathbb{T}^{2}}\frac{(1-zw)(1-s)(1-t)}{(1-zs)(1-wt)}\, d\nu_{nk}(s,t)\right)=\Sigma_{\nu}(z,w)
\]
for $(z,w)\in\Omega_{r}$. Using the formula 
\[
\frac{(1-z)(1-s)}{1-zs}=\frac{1+zs}{1-zs}(1-\Re s)-i\Im s,
\]
we can re-write the integrand in the last limit as
\begin{eqnarray*}
\frac{(1-zw)(1-s)(1-t)}{(1-zs)(1-wt)} & = & f(z,w)\left\{ \frac{1+zs}{1-zs}\frac{1+wt}{1-wt}(1-\Re t)(1-\Re s)\right.\\
 &  & \left.-i\frac{1+zs}{1-zs}(\Im t)(1-\Re s)-i\frac{1+wt}{1-wt}(\Im s)(1-\Re t)-\Im s\Im t\right\} .
\end{eqnarray*}
After passing to the subsequential limits $(\rho_{1},\rho_{2},a)$
and $(\rho_{1}^{\prime},\rho_{2}^{\prime},a^{\prime})$, we obtain
the identity $\exp\left(f\cdot F_{\rho_{1},\rho_{2},a}\right)=\Sigma_{\nu}=\exp\left(f\cdot F_{\rho_{1}^{\prime},\rho_{2}^{\prime},a^{\prime}}\right)$
in the set $\Omega_{r}$. Since both $\exp\left(f\cdot F_{\rho_{1},\rho_{2},a}\right)$
and $\exp\left(f\cdot F_{\rho_{1}^{\prime},\rho_{2}^{\prime},a^{\prime}}\right)$
are already holomorphic in $(\mathbb{C}\setminus\mathbb{T})^{2}$
and $\Omega_{r}$ is an open set intersecting all four connected components
of $(\mathbb{C}\setminus\mathbb{T})^{2}$, the uniqueness principle
yields $\exp\left(f\cdot F_{\rho_{1},\rho_{2},a}\right)=\exp\left(f\cdot F_{\rho_{1}^{\prime},\rho_{2}^{\prime},a^{\prime}}\right)$
in all of $(\mathbb{C}\setminus\mathbb{T})^{2}$. When focusing on
the connected components $\mathbb{D}^{2}$ and $\mathbb{D}\times\Delta_{1}$,
we see that there exists a unique integer $k$ such that 
\begin{equation}
f(z,w)F_{\rho_{1},\rho_{2},a}(z,w)=f(z,w)F_{\rho_{1}^{\prime},\rho_{2}^{\prime},a^{\prime}}(z,w)+2k\pi i,\qquad(z,w)\in\mathbb{D}^{2},\label{eq:3.6}
\end{equation}
and 
\begin{equation}
f(z,w)F_{\rho_{1},\rho_{2},a}(z,w)=f(z,w)F_{\rho_{1}^{\prime},\rho_{2}^{\prime},a^{\prime}}(z,w),\qquad(z,w)\in\mathbb{D}\times\Delta_{1}.\label{eq:3.7}
\end{equation}
Note that the phase transition constant in (\ref{eq:3.7}) is zero,
because $\lim_{z\rightarrow0,\,|w|\rightarrow\infty}f(z,w)=0$ and
both limits 
\[
\lim_{z\rightarrow0,\,|w|\rightarrow\infty}F_{\rho_{1},\rho_{2},a}(z,w),\quad\lim_{z\rightarrow0,\,|w|\rightarrow\infty}F_{\rho_{1}^{\prime},\rho_{2}^{\prime},a^{\prime}}(z,w)
\]
exist in $\mathbb{C}$. 

Next, we argue that $k=0$. Indeed, for $(z,w)\in\mathbb{D}^{2}$ and
$w\neq0$, we first evaluate (\ref{eq:3.6}) and (\ref{eq:3.7}) at
$(z,w)$ and $(z,1/\overline{w})$ respectively and then consider
the difference ``$\text{(3.6)}-\text{(3.7)}$''. We get 
\begin{multline*}
\int_{\mathbb{T}^{2}}\frac{1+zs}{1-zs}P_{w}(\overline{t})(1-\Re t)\, d\rho_{1}(s,t)-i\int_{\mathbb{T}^{2}}P_{w}(\overline{t})\Im s\: d\rho_{2}(s,t)\\
=\int_{\mathbb{T}^{2}}\frac{1+zs}{1-zs}P_{w}(\overline{t})(1-\Re t)\, d\rho_{1}^{\prime}(s,t)-i\int_{\mathbb{T}^{2}}P_{w}(\overline{t})\Im s\: d\rho_{2}^{\prime}(s,t)+\frac{k\pi i}{f(z,w)},
\end{multline*}
where 
\[
P_{w}(x)=\Re\left[\frac{x+w}{x-w}\right],\qquad w\in\mathbb{D},\; x\in\mathbb{T},
\]
denotes the usual Poisson kernel in the disk $\mathbb{D}$. (Note
that the function $f$ does not have any zero in $\mathbb{D}^{2}$.)
Taking further the real part of the previous identity, we obtain
\begin{equation}
\int_{\mathbb{T}^{2}}P_{z}(\overline{s})P_{w}(\overline{t})(1-\Re t)\, d\rho_{1}(s,t)=\int_{\mathbb{T}^{2}}P_{z}(\overline{s})P_{w}(\overline{t})(1-\Re t)\, d\rho_{1}^{\prime}(s,t)-\Im\frac{k\pi}{f(z,w)}\label{eq:3.8}
\end{equation}
for any $(z,w)\in\mathbb{D}^{2}$, $w\neq0$ (actually, for $w=0$
as well, because both sides of (\ref{eq:3.8}) are continuous in $\mathbb{D}^{2}$).

Now, plug in $z=0$ at (\ref{eq:3.8}), we arrive at the formula 
\begin{equation}
\int_{\mathbb{T}}P_{w}(\overline{t})\, d\tau_{1}(t)=\int_{\mathbb{T}}P_{w}(\overline{t})\, d\tau_{2}(t)+k\pi\Im w,\label{eq:3.9}
\end{equation}
where the one-dimensional measures $\tau_{1}$, $\tau_{2}$ are given
by 
\[
d\tau_{1}(t)=(1-\Re t)\, d[\rho_{1}\circ\pi_{2}^{-1}](t),\quad d\tau_{2}(t)=(1-\Re t)\, d[\rho_{1}^{\prime}\circ\pi_{2}^{-1}](t).
\]

Let $\varepsilon\in(0,\pi/4)$ be arbitrary but fixed. For any $n>1/\varepsilon$,
we choose a function $f_{n}\in C(\mathbb{T})$ satisfying $0\leq f_{n}\leq1$,
$f_{n}(e^{i\theta})=1$ for $\theta\in[\varepsilon,\pi/4]$, and $f_{n}(e^{i\theta})=0$
for $\theta\in(-\pi,\varepsilon-n^{-1}]\cup[\pi/4+n^{-1},\pi]$. Let \[p_{j}(re^{i\theta})=\int_{\mathbb{T}}P_{re^{i\theta}}(\overline{t})\, d\tau_{j}(t)\] be the Poisson integral of the measure $d\tau_{j}(1/t)$, so that the Lebesgue absolutely continuous measures  $d\mu_{jr}(\theta)=p_{j}(re^{i\theta})\,d\theta/2\pi$ tend weakly to $d\tau_{j}(1/t)$ as $r\rightarrow 1^{-}$. We now have 
\begin{eqnarray*}
\lim_{n\rightarrow\infty}\lim_{r\uparrow1}\int_{\mathbb{T}}\frac{f_{n}(e^{i\theta})}{1-\cos\theta}\,d\mu_{jr}(\theta) & = & \lim_{n\rightarrow\infty}\int_{-\pi}^{\pi}\frac{f_{n}(e^{i\theta})}{1-\cos\theta}\, d\tau_{j}(e^{-i\theta})\\
 & = & \begin{cases}
\rho_{1}\circ\pi_{2}^{-1}(A_{\varepsilon}), & j=1;\\
\rho_{1}^{\prime}\circ\pi_{2}^{-1}(A_{\varepsilon}), & j=2,
\end{cases}
\end{eqnarray*}
where the arc $A_{\varepsilon}=\{e^{i\theta}:\theta\in[-\pi/4,-\varepsilon]\}$.
Therefore, (\ref{eq:3.9}) yields
\[
\left|\frac{k}{2}\int_{\varepsilon}^{\pi/4}\frac{\sin\theta}{1-\cos\theta}\, d\theta\right|=\left|\rho_{1}\circ\pi_{2}^{-1}(A_{\varepsilon})-\rho_{1}^{\prime}\circ\pi_{2}^{-1}(A_{\varepsilon})\right|\leq\rho_{1}(\mathbb{T}^{2})+\rho_{1}^{\prime}(\mathbb{T}^{2})<\infty
\]
for all $\varepsilon\in(0,\pi/4)$, which is possible only if $k=0$. 

Since $k=0$, the equation (\ref{eq:3.8}) implies that $(1-\Re t)\, d\rho_{1}$
and $(1-\Re t)\, d\rho_{1}^{\prime}$ have the same Poisson integral
over $\mathbb{D}^{2}$ and so these two measures are the same on $\mathbb{T}^{2}$.
Meanwhile, the marginal weak convergence $\mu_{n}^{(1)}\Rightarrow\nu_{\boxtimes}^{\gamma_{1},\sigma_{1}}$
and the 1-D free limit theorems imply that 
\[
\rho_{1}\circ\pi_{1}^{-1}=\sigma_{1}=\rho_{1}^{\prime}\circ\pi_{1}^{-1}
\]
on $\mathbb{T}$. As will be seen below, these two observations will
lead to $\rho_{1}=\rho_{1}^{\prime}$.

First, for any closed subset $F\subset\mathbb{T}^{2}$, $1\notin\pi_{2}(F)$,
we have the distance 
\[
\min_{(s,t)\in F}|1-\pi_{2}(s,t)|>0,
\]
so that the function $(s,t)\mapsto1/(1-\Re t)$ is continuous on $F$.
Denoting 
\[
\rho=(1-\Re t)\, d\rho_{1}=(1-\Re t)\, d\rho_{1}^{\prime},
\]
we have 
\[
\rho_{1}(F)=\int_{F}\frac{1}{1-\Re t}\, d\rho=\rho_{1}^{\prime}(F)
\]
for all such $F$. Since finite Borel measures on $\mathbb{T}^{2}$
are Radon measures, we conclude that $\rho_{1}(E)=\rho_{1}^{\prime}(E)$
for any Borel subset $E\subset\mathbb{T}^{2}$ with $1\notin\pi_{2}(E)$
by approximation. 

Now, given a general Borel subset $E\subset\mathbb{T}^{2}$, we decompose
it into a disjoint union 
\[
E=[\underbrace{E\cap\pi_{2}^{-1}\{1\}}_{=E_{0}}]\cup[E\setminus E_{0}]
\]
of Borel measurable sets according to whether the second coordinate
is equal to one or not. Since $1\notin\pi_{2}(E\setminus E_{0})$,
the measures $\rho_{1}$ and $\rho_{1}^{\prime}$ agree on the sets
$E\setminus E_{0}$. If the set $E_{0}$ is empty then we have $\rho_{1}(E)=\rho_{1}^{\prime}(E)$.
If $E_{0}$ is not empty, we write the set $E_{0}$ as the product
$B\times\{1\}$ for some Borel measurable subset $B\subset\mathbb{T}$
and observe that
\begin{eqnarray*}
\rho_{1}(E_{0}) & = & \rho_{1}(B\times\mathbb{T})-\rho_{1}(B\times(\mathbb{T}\setminus\{1\}))\\
 & = & \sigma_{1}(B)-\rho_{1}^{\prime}(B\times(\mathbb{T}\setminus\{1\}))\\
 & = & \rho_{1}^{\prime}(B\times\mathbb{T})-\rho_{1}^{\prime}(B\times(\mathbb{T}\setminus\{1\}))=\rho_{1}^{\prime}(E_{0}),
\end{eqnarray*}
because $1\notin\pi_{2}(B\times(\mathbb{T}\setminus\{1\}))$. So we
still have $\rho_{1}(E)=\rho_{1}^{\prime}(E)$ in this case, proving
$\rho_{1}=\rho_{1}^{\prime}$. 

In the same way, we have 
\[
(1-\Re s)\, d\rho_{2}=(1-\Re s)\, d\rho_{2}^{\prime},\quad\rho_{2}\circ\pi_{2}^{-1}=\rho_{2}^{\prime}\circ\pi_{2}^{-1}=\sigma_{2}
\]
by analyzing the identity $\exp\left(f\cdot F_{\rho_{1},\rho_{2},a}\right)=\exp\left(f\cdot F_{\rho_{1}^{\prime},\rho_{2}^{\prime},a^{\prime}}\right)$
on the components $\mathbb{D}^{2}$ and $\Delta_{1}\times\mathbb{D}$.
It follows that $\rho_{2}=\rho_{2}^{\prime}$.

Finally, the real number $a$ is unique because 
\[
a=\rho(\mathbb{T}^{2})-\Re\left[f(0,0)F_{\rho_{1},\rho_{2},a}(0,0)\right]=\rho(\mathbb{T}^{2})-\Re\left[f(0,0)F_{\rho_{1}^{\prime},\rho_{2}^{\prime},a^{\prime}}(0,0)\right]=a^{\prime}.
\]

In conclusion, we have shown the convergence of the sequences $S$,
$T$, $C$, and therefore the system (\ref{eq:3.5}) and the exponential
integral representation $\Sigma_{\nu}=\exp\left(f\cdot F_{\rho_{1},\rho_{2},a}\right)$
are proved.

Conversely, assume (\ref{eq:3.5}) and note that it implies $\mu_{n}^{(j)}\Rightarrow\nu_{\boxtimes}^{\gamma_{j},\rho_{j}\circ\pi_{j}^{-1}}$
through the 1-D free limit theorems for $j=1,2$. In other words,
if we consider any weak probability limit $\nu$ for the bi-free convolutions
$\{\mu_{n}\}_{n=1}^{\infty}$, then its marginal law $\nu^{(j)}$
is uniquely determined by $\nu^{(j)}=\nu_{\boxtimes}^{\gamma_{j},\rho_{j}\circ\pi_{j}^{-1}}$
for $j=1,2$. From this, we also know that the limiting transform
$\Sigma_{\nu}$ is at least defined in a small bidisk centered at
the point $(0,0)$. 

Meanwhile, the proof of the ``only if'' part shows that near the
point $(0,0)$, the transform $\Sigma_{\nu}$ is uniquely determined
by the system (\ref{eq:3.5}) and $\Sigma_{\nu}=\exp\left(f\cdot F_{\rho_{1},\rho_{2},a}\right)$.
In particular, we have $\Sigma_{\nu}(0,0)=\exp\left(F_{\rho_{1},\rho_{2},a}(0,0)\right)\neq0$,
implying that the limit $\nu$ is in fact a member of $\mathscr{P}_{\mathbb{T}^{2}}^{\times}$
and $\Sigma_{\nu}$ is defined in a product set $\Omega_{r}$. Moreover,
the identity $\Sigma_{\nu}=\exp\left(f\cdot F_{\rho_{1},\rho_{2},a}\right)$
now holds everywhere in $\Omega_{r}$, meaning
that $\Sigma_{\nu}$ is globally determined by (\ref{eq:3.5}). By
Proposition 2.5, the limit point $\nu$ is unique and therefore $\mu_{n}\Rightarrow\nu$.
The proof of the theorem is now finished.
\end{proof}
The preceding proof shows that the parameters $\rho_{1}$, $\rho_{2}$,
$a$, $\gamma_{1}$, $\gamma_{2}$ in (\ref{eq:3.5}) are uniquely
associated with the limit law $\nu$, although $\nu$ may serve as
the weak limit for many infinitesimal arrays. 

We conclude this section with two examples. The first one is a bi-free
analogue of the wrapped Gaussian distribution in directional statistics. 
\begin{example}
Fix $r\in(0,\infty)$. For any $n>r$, let $\xi_{n}=\sqrt{1-r/n}+i\sqrt{r/n}$
and let $\mu_{n}$ be the law of a random vector $(X_{n},Y_{n})$,
where 
\[
(X_{n},Y_{n})=\begin{cases}
(\xi_{n},\xi_{n})\\
(\overline{\xi_{n}},\overline{\xi_{n}})
\end{cases}\quad\text{with equal probabilities.}
\]
We shall consider the infinitesimal array $\{\mu_{nk}\}_{n,k}$, where
$k_{n}=n$ and $\mu_{n1}=\mu_{n2}=\cdots=\mu_{nn}=\mu_{n}$. Choose
the centering parameter $\varepsilon=1$ so that $b_{nk}^{(1)}=1=b_{nk}^{(2)}$
and $\nu_{nk}=\mu_{n}$ for all $n\geq1$ and $1\leq k\leq n$. Then
for any $p,q\in\mathbb{Z}$, we have the convergence of the moment
\[
nE\left[X_{n}^{p}Y_{N}^{q}(1-\Re X_{n})\right]=nE\left[X_{n}^{p}Y_{N}^{q}(1-\Re Y_{n})\right]\rightarrow r/2=\int_{\mathbb{T}^{2}}s^{p}t^{q}\, d[(r/2)\delta_{(1,1)}](s,t)
\]
as $n\rightarrow\infty$, implying that $\rho_{1}=(r/2)\delta_{(1,1)}=\rho_{2}$.
Also, it is easy to see that 
\[
a=\lim_{n\rightarrow\infty}nE\left[\Im X_{n}\Im Y_{n}\right]=r
\]
and $\gamma_{j}=1$ for $j=1,2$. Thus, by Theorem 3.4, the limit
law $N(r)$ of the\emph{ $n$-fold bi-free convolution powers} $\mu_{n}^{\boxtimes\boxtimes n}=\mu_{n}\boxtimes\boxtimes\mu_{n}\boxtimes\boxtimes\cdots\boxtimes\boxtimes\mu_{n}$
exists, and it is determined by 
\[
\begin{cases}
\Sigma_{N(r)}=\exp\left(-r\cdot f\right) & \text{in }(\mathbb{C}\setminus\mathbb{T})^{2};\\
N(r)\circ\pi_{j}^{-1}=\nu_{\boxtimes}^{1,(r/2)\delta_{1}}, & j=1,2.
\end{cases}
\]
It is interesting to observe that the same kind of central limit process
with infinitesimal random vectors 
\[
(X_{n},Y_{n})=\begin{cases}
(\xi_{n},\overline{\xi_{n}})\\
(\overline{\xi_{n}},\xi_{n})
\end{cases}\quad\text{with equal probabilities}
\]
produces a limit distribution $N(-r)$ satisfying 
\[
\begin{cases}
\Sigma_{N(-r)}=\exp\left(r\cdot f\right) & \text{in }(\mathbb{C}\setminus\mathbb{T})^{2};\\
N(-r)\circ\pi_{j}^{-1}=\nu_{\boxtimes}^{1,(r/2)\delta_{1}}, & j=1,2.
\end{cases}
\]
In summary, the (non-degenerate) \emph{multiplicative bi-free normal
law} $N(a)$ for any $a\in\mathbb{R}\setminus\{0\}$ is a probability
measure in $\mathscr{P}_{\mathbb{T}^{2}}^{\times}$, whose $\Sigma$-transform
is simply $\exp(-a\cdot f)$ and its marginal laws are the usual free
unitary Brownian motion with L\'{e}vy parameters $\gamma_{1}=\gamma_{2}=1$
and $\sigma_{1}=\sigma_{2}=(|a|/2)\delta_{1}$. The degenerate case
$a=0$ corresponds to the point mass $\delta_{(1,1)}$.
\end{example}
The next example introduces an analogue of the Poisson law. 
\begin{example}
Given a probability measure $\mu\neq\delta_{(1,1)}$ on $\mathbb{T}^{2}$
and a parameter $r>0$, we set $k_{n}=n$ for $n>r$ and consider
the array 
\[
\mu_{n1}=\mu_{n2}=\cdots=\mu_{nn}=\mu_{n}=(1-r/n)\delta_{(1,1)}+(r/n)\mu.
\]
It is easy to see that $\rho_{1}=(1-\Re s)\, d[r\mu]$, $\rho_{2}=(1-\Re t)\, d[r\mu]$,
\[
a=\int_{\mathbb{T}^{2}}\Im s\Im t\, d[r\mu](s,t),\quad\text{and}\quad\gamma_{j}=\exp\left(-i\int_{\mathbb{T}}\Im x\, d[r\mu]^{(j)}(x)\right)\quad(j=1,2).
\]
Thus, the limit law $Poi(r,\mu)$ (called the \emph{multiplicative
bi-free compound Poisson law} of rate $r$ and jump distribution $\mu$)
has the marginal L\'{e}vy measure $\sigma_{j}=(1-\Re x)\, d[r\mu]^{(j)}$
for $j=1,2$, and the $\Sigma$-transform
\begin{eqnarray*}
\Sigma_{Poi(r,\mu)}(z,w) & = & \exp\left(f(z,w)\left\{ \int_{\mathbb{T}^{2}}\frac{1+zs}{1-zs}\frac{1+wt}{1-wt}(1-\Re t)(1-\Re s)d[r\mu](s,t)\right.\right.\\
 &  & -i\int_{\mathbb{T}^{2}}\frac{1+zs}{1-zs}(\Im t)(1-\Re s)\, d[r\mu](s,t)\\
 &  & \left.\left.-i\int_{\mathbb{T}^{2}}\frac{1+wt}{1-wt}(\Im s)(1-\Re t)\, d[r\mu](s,t)-\int_{\mathbb{T}^{2}}\Im s\Im t\, d[r\mu](s,t)\right\} \right)\\
 & = & \exp\left(\int_{\mathbb{T}^{2}}\frac{(1-zw)(1-s)(1-t)}{(1-zs)(1-wt)}\, d[r\mu](s,t)\right)
\end{eqnarray*}
in $(\mathbb{C}\setminus\mathbb{T})^{2}$. In view of the above construction,
we also have the degenerate cases
\[
Poi(r,\delta_{(1,1)})=\delta_{(1,1)}=Poi(0,\mu).
\]
 
\end{example}

\section{infinite divisibility}

We start with a standard definition. 
\begin{defn}
A probability measure $\nu$ on $\mathbb{T}^{2}$ is said to be \emph{$\boxtimes\boxtimes$-infinitely
divisible} if for each $n\geq2$, there exists a probability measure
$\nu_{n}$ on $\mathbb{T}^{2}$ such that $\nu=\nu_{n}^{\boxtimes\boxtimes n}$. 
\end{defn}
We aim to characterize the $\boxtimes\boxtimes$-infinitely divisible
measures in $\mathscr{P}_{\mathbb{T}^{2}}^{\times}$. 

Point masses are $\boxtimes\boxtimes$-infinitely divisible, because
$\delta_{\lambda}=\delta_{(\pi_{1}(\lambda)^{1/n},\,\pi_{2}(\lambda)^{1/n})}$.
More generally, since 
\[
\nu_{\boxtimes}^{\gamma_{1},\sigma_{1}}\otimes\nu_{\boxtimes}^{\gamma_{2},\sigma_{2}}=\left[\nu_{\boxtimes}^{\gamma_{1}^{1/n},\sigma_{1}/n}\otimes\nu_{\boxtimes}^{\gamma_{2}^{1/n},\sigma_{2}/n}\right]^{\boxtimes\boxtimes n},\qquad n\geq2,
\]
the product measure of two $\boxtimes$-infinitely divisible measures
from $\mathscr{P}_{\mathbb{T}}^{\times}$ is $\boxtimes\boxtimes$-infinitely
divisible. (There is no uniqueness for the convolution decomposition
in these examples, since any branch of the $n$-th root can be used
here as long as it makes sense.) Examples 3.5 and 3.6 show that 
\[
N(a)=N(a/n)^{\boxtimes\boxtimes n}\quad\text{and}\quad Poi(r,\mu)=Poi(r/n,\mu)^{\boxtimes\boxtimes n}.
\]
So, the multiplicative bi-free normal and compound Poisson laws are
also $\boxtimes\boxtimes$-infinitely divisible. Finally, any bi-free
convolution mixture of these measures remains $\boxtimes\boxtimes$-infinitely
divisible.

Weak limits of $\boxtimes\boxtimes$-infinitely divisible measures
are $\boxtimes\boxtimes$-infinitely divisible. Indeed, if $\nu_{n}\Rightarrow\nu$
and each $\nu_{n}$ is $\boxtimes\boxtimes$-infinitely divisible,
then to each fixed $m\geq2$ there is a sequence $\{\nu_{n}^{m}\}_{n=1}^{\infty}$
in $\mathscr{P}_{\mathbb{T}^{2}}$ such that $\nu_{n}=(\nu_{n}^{m})^{\boxtimes\boxtimes m}$.
The compactness of $\mathbb{T}^{2}$ implies that there exists a weak
limit point $\nu^{m}\in\mathscr{P}_{\mathbb{T}^{2}}$ for the sequence
$\{\nu_{n}^{m}\}_{n=1}^{\infty}$. We conclude that $\nu=(\nu^{m})^{\boxtimes\boxtimes m}$
after taking the limit, that is, the measure $\nu$ is $\boxtimes\boxtimes$-infinitely
divisible.

The next result establishes the universal role of $\boxtimes\boxtimes$-infinitely
divisible measures in limit theorems of infinitesimal arrays. Recall
that the notations $f$ and $F_{\rho_{1},\rho_{2},a}$ are the same
as in Theorem 3.4.
\begin{thm}
Given a measure $\nu\in\mathscr{P}_{\mathbb{T}^{2}}^{\times}$, the
following statements are equivalent:
\begin{enumerate}
\item The measure $\nu$ is $\boxtimes\boxtimes$-infinitely divisible.
\item There exist an infinitesimal array $\{\mu_{nk}\}_{n,k}\subset\mathscr{P}_{\mathbb{T}^{2}}^{\times}$
and a sequence $\lambda_{n}\in\mathbb{T}^{2}$ such that 
\[
\delta_{\lambda_{n}}\boxtimes\boxtimes\mu_{n1}\boxtimes\boxtimes\mu_{n2}\boxtimes\boxtimes\cdots\boxtimes\boxtimes\mu_{nk_{n}}\Rightarrow\nu.
\]

\end{enumerate}
\end{thm}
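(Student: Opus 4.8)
The plan is to prove the two implications separately, using Theorem 3.4 and the estimates of Section 3 as the bridge between the measure $\nu$ and its L\'{e}vy-type data.

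For the implication $(1)\Rightarrow(2)$, I would realize $\nu$ as a constant-row infinitesimal array built from its own convolution roots. By Definition 4.1 there are measures $\nu_n\in\mathscr{P}_{\mathbb{T}^2}$ with $\nu=\nu_n^{\boxtimes\boxtimes n}$, and the marginals satisfy $(\nu_n^{(j)})^{\boxtimes n}=\nu^{(j)}$. Since $\nu^{(j)}\in\mathscr{P}_{\mathbb{T}}^{\times}$ is $\boxtimes$-infinitely divisible, the one-dimensional free theory (cf. \cite{BerVoi}) supplies unimodular constants $c_n^{(j)}$ for which $\delta_{c_n^{(j)}}\boxtimes\nu_n^{(j)}\Rightarrow\delta_1$. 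Putting $\mu_{nk}=\delta_{(c_n^{(1)},c_n^{(2)})}\boxtimes\boxtimes\nu_n$ for $1\le k\le k_n:=n$ forces both marginals of $\mu_{nk}$ to tend to $\delta_1$, whence $\mu_{nk}\Rightarrow\delta_{(1,1)}$; thus the array is infinitesimal and eventually lies in $\mathscr{P}_{\mathbb{T}^2}^{\times}$. Finally, choosing $\lambda_n=((c_n^{(1)})^{-n},(c_n^{(2)})^{-n})$ and using that point masses act on $\boxtimes\boxtimes$ only by rotation, one computes $\delta_{\lambda_n}\boxtimes\boxtimes\mu_{n1}\boxtimes\boxtimes\cdots\boxtimes\boxtimes\mu_{nn}=\delta_{(1,1)}\boxtimes\boxtimes\nu=\nu$ for every $n$, so the array converges to $\nu$ by construction.

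For $(2)\Rightarrow(1)$ I would use the \emph{accompanying-law} method. Let $\{\nu_{nk}\}$ be the rotated array from (3.1), and to each $\nu_{nk}$ attach its compound Poisson accompanying law $\omega_{nk}=Poi(1,\nu_{nk})$ from Example 3.6, which is $\boxtimes\boxtimes$-infinitely divisible and satisfies
\[
\Sigma_{\omega_{nk}}(z,w)=\exp\left(\int_{\mathbb{T}^2}\frac{(1-zw)(1-s)(1-t)}{(1-zs)(1-wt)}\,d\nu_{nk}(s,t)\right).
\]
Form $\tilde\mu_n=\delta_{\lambda_n}\boxtimes\boxtimes\omega_{n1}\boxtimes\boxtimes\cdots\boxtimes\boxtimes\omega_{nk_n}$. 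Since a $\boxtimes\boxtimes$-convolution of $\boxtimes\boxtimes$-infinitely divisible measures is again $\boxtimes\boxtimes$-infinitely divisible, each $\tilde\mu_n$ is infinitely divisible. By Corollary 2.9 one has $\Sigma_{\tilde\mu_n}=\prod_k\Sigma_{\omega_{nk}}$, which is precisely the pre-limit expression in $G$ of (3.3); hence Lemma 3.3(2) gives $\prod_k\Sigma_{\omega_{nk}}\to\Sigma_\nu$ on $\Omega_r$, the very limit already attained by $\prod_k\Sigma_{\mu_{nk}}$ under the hypothesis $\mu_n\Rightarrow\nu$. Together with the one-dimensional accompanying-law theory, which yields $\tilde\mu_n^{(j)}\Rightarrow\nu^{(j)}$, Theorem 2.10 then gives $\tilde\mu_n\Rightarrow\nu$; as weak limits of $\boxtimes\boxtimes$-infinitely divisible measures are $\boxtimes\boxtimes$-infinitely divisible, $\nu$ is infinitely divisible.

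The main obstacle is the careful import of the one-dimensional free limit theory at two points: producing the centering constants $c_n^{(j)}$ that render the root array infinitesimal in $(1)\Rightarrow(2)$, and verifying that the marginal accompanying laws $\tilde\mu_n^{(j)}$ converge to $\nu^{(j)}$ in $(2)\Rightarrow(1)$. Both reduce to the known fact that a $\boxtimes$-infinitely divisible law on $\mathbb{T}$ is the weak limit of its infinitesimal centered-root (equivalently, accompanying) arrays. Once this is granted, the approximation $\log\Sigma_{\nu_{nk}}\approx\Sigma_{\nu_{nk}}-1\approx\log\Sigma_{\omega_{nk}}$ furnished by Proposition 3.1(3) and Lemma 3.3 makes the two-dimensional $\Sigma$-transforms fall into line automatically, so that the remaining steps are formal consequences of Corollary 2.9, Theorem 2.10, and the closure of infinite divisibility under weak limits.
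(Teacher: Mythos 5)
Your overall strategy coincides with the paper's: for $(1)\Rightarrow(2)$ you build a constant-row infinitesimal array out of the convolution roots recentred by their marginal means, and for $(2)\Rightarrow(1)$ you replace the array by compound Poisson accompanying laws and invoke the continuity theorem together with the closure of infinite divisibility under weak limits. The first direction is sound; your $n$-dependent centring constants $c_n^{(j)}=|m(\nu_n^{(j)})|/m(\nu_n^{(j)})$ work just as well as the paper's fixed constants along a subsequence, since $|m(\nu_n^{(j)})|=|m(\nu^{(j)})|^{1/n}\rightarrow1$ forces every weak limit point of the recentred roots to be $\delta_1$.

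The second direction has one genuine error: the accompanying sequence must be recentred. You form $\tilde{\mu}_n=\delta_{\lambda_n}\boxtimes\boxtimes\,\omega_{n1}\boxtimes\boxtimes\cdots\boxtimes\boxtimes\,\omega_{nk_n}$ with the \emph{original} $\lambda_n$, but the jump distributions of your $\omega_{nk}=Poi(1,\nu_{nk})$ are the rotated measures $\nu_{nk}=\delta_{(1/b_{nk}^{(1)},1/b_{nk}^{(2)})}\boxtimes\boxtimes\,\mu_{nk}$, so the rotations $b_{nk}^{(j)}$ have been stripped out of the jump parts and are nowhere restored. The one-dimensional limit theorem controls only the combined quantity $\overline{\pi_{j}(\lambda_{n})}\exp\left(-i\sum_{k}\left[\int_{\mathbb{T}}\Im x\, d\nu_{nk}^{(j)}(x)+\arg b_{nk}^{(j)}\right]\right)$, so your claim $\tilde{\mu}_n^{(j)}\Rightarrow\nu^{(j)}$ is off by the factor $\exp\left(i\sum_{k}\arg b_{nk}^{(j)}\right)$, which need not tend to $1$ (nor even converge). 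Concretely, take $\mu_{n1}=\cdots=\mu_{nk_n}=\delta_{(e^{i\theta_0/k_n},1)}$ and $\lambda_n=(1,1)$: then every $\nu_{nk}=\delta_{(1,1)}$, every $\omega_{nk}=\delta_{(1,1)}$, and your $\tilde{\mu}_n\equiv\delta_{(1,1)}$, while the actual limit is $\nu=\delta_{(e^{i\theta_0},1)}$. The fix is exactly the paper's device: replace $\lambda_n$ by $\lambda_n'$ with $\pi_j(\lambda_n')=\pi_j(\lambda_n)\exp\left(i\sum_{k}\arg b_{nk}^{(j)}\right)$. This does not affect the $\Sigma$-transform (which is invariant under marginal rotation), but it makes the marginal L\'{e}vy parameters of $\tilde{\mu}_n$ match condition (3.5), after which Theorem 2.10 and Proposition 2.5 deliver $\tilde{\mu}_n\Rightarrow\nu$ as you describe. (The paper also isolates the degenerate case where the exponential integral form is trivial and $\nu$ is a product of $\boxtimes$-infinitely divisible marginals; with your per-term $Poi(1,\nu_{nk})$ normalization that case causes no separate difficulty.)
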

\begin{proof}
Assume that $\nu$ is $\boxtimes\boxtimes$-infinitely divisible,
and we write $\nu=\nu_{n}^{\boxtimes\boxtimes n}$ ($n\geq2$) so
that $\nu^{(j)}=[\nu_{n}^{(j)}]^{\boxtimes n}$ for $j=1,2$. Let
$\nu_{0}$ be a probability weak limit point of the first marginal
laws $\{\nu_{n}^{(1)}\}_{n=1}^{\infty}$ on $\mathbb{T}$. Passing
to a convergent subsequence if necessary, we have 
\[
\left|m(\nu_{0})\right|=\lim_{n\rightarrow\infty}\left|m(\nu_{n}^{(1)})\right|=\lim_{n\rightarrow\infty}\sqrt[n]{\left|m(\nu^{(1)})\right|}=1.
\]
The limit $\nu_{0}$ is therefore of the form $\delta_{c}$ for some
$c\in\mathbb{T}$. In other words, we can always find positive integers
$k_{n}\rightarrow\infty$ and a complex number $c_{1}\in\mathbb{T}$
such that $\delta_{c_{1}}\boxtimes\nu_{k_{n}}^{(1)}\Rightarrow\delta_{1}$
as $n\rightarrow\infty$. By a similar argument, we can further refine
the sequence $k_{n}$ so that the sequence $\delta_{c_{2}}\boxtimes\nu_{k_{n}}^{(2)}$
also tends weakly to $\delta_{1}$ for some $c_{2}\in\mathbb{T}$.
We then introduce a triangular array $\{\mu_{nk}\}_{n,k}$ and a sequence
$\{\lambda_{n}\}_{n=1}^{\infty}$ by 
\[
\mu_{n1}=\mu_{n2}=\cdots=\mu_{nk_{n}}=\delta_{(c_{1},c_{2})}\boxtimes\boxtimes\nu_{k_{n}},\quad\lambda_{n}=(c_{1}^{-k_{n}},c_{2}^{-k_{n}})\qquad n\geq1.
\]
The array $\{\mu_{nk}\}_{n,k}$ has identical rows, and it is infinitesimal
because 
\begin{multline*}
\mu_{nk}(\{(s,t)\in\mathbb{T}^{2}:|s-1|+|t-1|\geq\varepsilon\})\\
\leq\mu_{nk}^{(1)}(\{s\in\mathbb{T}:|s-1|\geq\varepsilon/2\})+\mu_{nk}^{(2)}(\{t\in\mathbb{T}:|t-1|\geq\varepsilon/2\}).
\end{multline*}
Moreover, we have 
\[
\nu=\delta_{\lambda_{n}}\boxtimes\boxtimes\mu_{n1}\boxtimes\boxtimes\mu_{n2}\boxtimes\boxtimes\cdots\boxtimes\boxtimes\mu_{nk_{n}},\qquad n\geq1,
\]
proving the statement (2).

For the converse, we assume (2) and write the marginal limit law $\nu^{(j)}=\nu_{\boxtimes}^{\gamma_{j},\sigma_{j}}$
for $j=1,2$. The system (\ref{eq:3.5}) shows that 
\begin{eqnarray}
\Sigma_{\nu}(z,w) & = & \exp\left(f(z,w)F_{\rho_{1},\rho_{2},a}(z,w)\right)\label{eq:4.1}\\
 & = & \lim_{n\rightarrow\infty}\exp\left(\int_{\mathbb{T}^{2}}\frac{(1-zw)(1-s)(1-t)}{(1-zs)(1-wt)}\, d\tau_{n}(s,t)\right)\nonumber 
\end{eqnarray}
for $(z,w)$ in $(\mathbb{C}\setminus\mathbb{T})^{2}$, where the
notation $\tau_{n}=\sum_{k=1}^{k_{n}}\nu_{nk}$.

If $\tau_{n}(\mathbb{T}^{2})=0$ for infinitely many $n$'s, then
the exponential integral form in (\ref{eq:4.1}) degenerates to one
infinitely often, which means that the map $\Sigma_{\nu}$ is constantly
one. In this case the limiting measure $\nu$ is of the form
\[
\nu=\nu_{\boxtimes}^{\gamma_{1},\sigma_{1}}\otimes\nu_{\boxtimes}^{\gamma_{2},\sigma_{2}},
\]
which is $\boxtimes\boxtimes$-infinitely divisible.

Alternatively, we assume that $r_{n}=\tau_{n}(\mathbb{T}^{2})>0$
for sufficiently large $n$. Introduce a new rotation sequence $\lambda_{n}^{\prime}\in\mathbb{T}^{2}$
by specifying its coordinates 
\[
\pi_{j}(\lambda_{n}^{\prime})=\pi_{j}(\lambda_{n})\exp\left(i\sum_{k=1}^{k_{n}}\arg b_{nk}^{(j)}\right),\qquad j=1,2,
\]
where the constants $b_{nk}^{(j)}$ refer back to (\ref{eq:3.1}).
Then the exponential integral form in (\ref{eq:4.1}) becomes the
$\Sigma$-tranform of the rotated compound Poisson laws $\delta_{\lambda_{n}^{\prime}}\boxtimes\boxtimes Poi(r_{n},\mu_{n})$,
where the jump distribution $\mu_{n}=\tau_{n}/r_{n}$. On the other
hand, by (\ref{eq:3.5}) again, we have the marginal weak convergence
\begin{eqnarray*}
[\delta_{\lambda_{n}^{\prime}}\boxtimes\boxtimes Poi(r_{n},\mu_{n})]\circ\pi_{j}^{-1} & = & \nu_{\boxtimes}^{\overline{\pi_{j}(\lambda_{n}^{\prime})}\,\exp\left(-i\int_{\mathbb{T}}\Im x\, d\tau_{n}^{(j)}(x)\right),\:(1-\Re x)\, d\tau_{n}^{(j)}}\\
 & \Rightarrow & \nu_{\boxtimes}^{\gamma_{j},\sigma_{j}}\qquad(j=1,2).
\end{eqnarray*}
So Theorem 2.10 implies $\delta_{\lambda_{n}^{\prime}}\boxtimes\boxtimes Poi(r_{n},\mu_{n})\Rightarrow\nu$,
meaning that $\nu$ is also bi-freely infinitely divisible in this
case.
\end{proof}
If $\nu\in\mathscr{P}_{\mathbb{T}^{2}}^{\times}$ is $\boxtimes\boxtimes$-infinitely
divisible, then the preceding result and Theorem 3.4 imply that $\Sigma_{\nu}=\exp\left(f\cdot F_{\rho_{1},\rho_{2},a}\right)$
where 
\[
(1-\Re t)\, d\rho_{1}=\text{w*-}\lim_{n\rightarrow\infty}(1-\Re s)(1-\Re t)\sum_{k=1}^{k_{n}}\nu_{nk}=(1-\Re s)\, d\rho_{2}.
\]
As we will see below, every exponential integral form like this corresponds
to a bi-freely infinitely divisible law. But of course there is no
uniqueness for this underlying infinitely divisible measure, due to
the very nature of the $\Sigma$-transform.
\begin{prop}
Let $\rho_{1}$ and $\rho_{2}$ be two measures in $\mathscr{M}_{\mathbb{T}^{2}}$
such that 
\[
(1-\Re t)\, d\rho_{1}=(1-\Re s)\, d\rho_{2},
\]
and let $a\in\mathbb{R}$. Then there exists a $\boxtimes\boxtimes$-infinitely
divisible measure $\nu\in\mathscr{P}_{\mathbb{T}^{2}}^{\times}$ such
that $\Sigma_{\nu}=\exp(f\cdot F_{\rho_{1},\rho_{2},a})$ in $(\mathbb{C}\setminus\mathbb{T})^{2}$. \end{prop}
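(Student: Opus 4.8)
The plan is to realize $\nu$ as the bi-free convolution of a weak limit of multiplicative compound Poisson laws with a suitable bi-free normal law, exploiting the multiplicativity $\Sigma_{\mu_{1}\boxtimes\boxtimes\mu_{2}}=\Sigma_{\mu_{1}}\Sigma_{\mu_{2}}$ from Corollary 2.9 together with the facts recorded at the start of this section: compound Poisson laws, normal laws, their bi-free convolutions, and weak limits of $\boxtimes\boxtimes$-infinitely divisible laws are all $\boxtimes\boxtimes$-infinitely divisible. The candidate L\'{e}vy measure is a (possibly infinite) Borel measure $\tau$ on $\mathbb{T}^{2}$ built from $\rho_{1},\rho_{2}$, and the constant $a$ will be matched at the very end by a normal correction.

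I would construct $\tau$ on $\mathbb{T}^{2}\setminus\{(1,1)\}$ by setting $d\tau=(1-\Re s)^{-1}\,d\rho_{1}$ on $\{s\neq1\}$ and $d\tau=(1-\Re t)^{-1}\,d\rho_{2}$ on $\{s=1,\,t\neq1\}$, assigning no mass to $(1,1)$. The hypothesis $(1-\Re t)\,d\rho_{1}=(1-\Re s)\,d\rho_{2}$ is exactly what makes these two prescriptions agree on the overlap $\{s\neq1,\,t\neq1\}$, and it also forces $\rho_{1}$ to vanish on $\{s=1,\,t\neq1\}$ and $\rho_{2}$ on $\{s\neq1,\,t=1\}$; hence $(1-\Re s)\,d\tau=d\rho_{1}$ and $(1-\Re t)\,d\tau=d\rho_{2}$ off the single point $(1,1)$, which is invisible to $F_{\rho_{1},\rho_{2},a}$ since every integrand there carries a vanishing factor $(1-\Re t)$, $(1-\Re s)$, $\Im s$, or $\Im t$. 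Next I would record the weighted finiteness $\int[(1-\Re s)+(1-\Re t)]\,d\tau\leq\rho_{1}(\mathbb{T}^{2})+\rho_{2}(\mathbb{T}^{2})<\infty$; since $|1-s|=\sqrt{2}\sqrt{1-\Re s}$ (and similarly in $t$), this yields $\int|\Im s\,\Im t|\,d\tau\leq\int[(1-\Re s)+(1-\Re t)]\,d\tau<\infty$, so $a_{0}:=\int_{\mathbb{T}^{2}}\Im s\,\Im t\,d\tau$ is a well-defined real number and the integrand $g_{z,w}(s,t)=\frac{(1-zw)(1-s)(1-t)}{(1-zs)(1-wt)}$ lies in $L^{1}(\tau)$ for each fixed $(z,w)\in(\mathbb{C}\setminus\mathbb{T})^{2}$. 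Expanding $g_{z,w}$ by the algebraic identity used in the proof of Theorem 3.4 and integrating term by term gives $\int_{\mathbb{T}^{2}}g_{z,w}\,d\tau=f(z,w)F_{\rho_{1},\rho_{2},a_{0}}(z,w)$.

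I would then truncate: for $\varepsilon>0$ put $\tau_{\varepsilon}=\tau|_{\{|s-1|+|t-1|\geq\varepsilon\}}$, a finite positive measure, and form the compound Poisson law $Poi(r_{\varepsilon},\mu_{\varepsilon})$ with $r_{\varepsilon}=\tau_{\varepsilon}(\mathbb{T}^{2})$ and $\mu_{\varepsilon}=\tau_{\varepsilon}/r_{\varepsilon}$, whose $\Sigma$-transform is $\exp(\int_{\mathbb{T}^{2}}g_{z,w}\,d\tau_{\varepsilon})$ by Example 3.6. As $\varepsilon\downarrow0$, dominated convergence (with dominating function $C(z,w)[(1-\Re s)+(1-\Re t)]$) gives $\Sigma_{Poi(r_{\varepsilon},\mu_{\varepsilon})}\to\exp(f\,F_{\rho_{1},\rho_{2},a_{0}})$ pointwise on $(\mathbb{C}\setminus\mathbb{T})^{2}$, a function whose value at $(0,0)$ is $\exp(F_{\rho_{1},\rho_{2},a_{0}}(0,0))\neq0$; meanwhile the marginals, being one-dimensional free compound Poisson laws with weakly convergent L\'{e}vy data, converge in $\mathscr{P}_{\mathbb{T}}^{\times}$ by 1-D free harmonic analysis. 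Applying the continuity Theorem 2.10 along a sequence $\varepsilon_{m}\downarrow0$ produces a limit $\nu_{0}\in\mathscr{P}_{\mathbb{T}^{2}}^{\times}$ with $\Sigma_{\nu_{0}}=\exp(f\,F_{\rho_{1},\rho_{2},a_{0}})$, and as a weak limit of $\boxtimes\boxtimes$-infinitely divisible measures $\nu_{0}$ is itself $\boxtimes\boxtimes$-infinitely divisible.

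Finally I would set $\nu=\nu_{0}\boxtimes\boxtimes N(a-a_{0})$. Since $\Sigma_{N(a-a_{0})}=\exp(-(a-a_{0})f)$ by Example 3.5 and only the additive constant distinguishes $F_{\rho_{1},\rho_{2},a_{0}}$ from $F_{\rho_{1},\rho_{2},a}$, Corollary 2.9 gives $\Sigma_{\nu}=\exp(f[F_{\rho_{1},\rho_{2},a_{0}}-(a-a_{0})])=\exp(f\,F_{\rho_{1},\rho_{2},a})$, while $\nu$ is $\boxtimes\boxtimes$-infinitely divisible as a bi-free convolution of two such measures and lies in $\mathscr{P}_{\mathbb{T}^{2}}^{\times}$ since $\Sigma_{\nu}(0,0)\neq0$. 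The main obstacle is the construction step: the compatibility condition permits $\tau$ to be infinite near $(1,1)$ (for instance when $\rho_{1}=\rho_{2}$ is a diagonal measure with density comparable to $(1-\Re s)^{1/2}$), so $\nu_{0}$ need not be a single compound Poisson law and one must justify the truncation limit — which is precisely where the weighted-finiteness bound and the $L^{1}(\tau)$ control of $g_{z,w}$ do the work.
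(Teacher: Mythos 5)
Your strategy---repackage the data as a (possibly infinite) L\'{e}vy-type measure $\tau$ with $(1-\Re s)\,d\tau=d\rho_{1}$ and $(1-\Re t)\,d\tau=d\rho_{2}$ away from $(1,1)$, truncate near $(1,1)$, approximate by multiplicative bi-free compound Poisson laws, pass to the limit, and fix the constant with a normal factor---is essentially the paper's proof: your $\tau$ is the paper's $d\rho/[(1-\Re s)(1-\Re t)]$, your cut-off $\{|s-1|+|t-1|\geq\varepsilon\}$ plays the role of the sets $A_{n}\times A_{n}$, and applying the correction $N(a-a_{0})$ once at the end instead of $N(a-a_{n})$ at each stage is immaterial. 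The bookkeeping (the compatibility condition forcing $\rho_{1}$ to vanish on $\{1\}\times(\mathbb{T}\setminus\{1\})$ and $\rho_{2}$ on $(\mathbb{T}\setminus\{1\})\times\{1\}$, the weighted finiteness of $\tau$, the $L^{1}(\tau)$ control of $g_{z,w}$, and the term-by-term identification $\int g_{z,w}\,d\tau=f\cdot F_{\rho_{1},\rho_{2},a_{0}}$) is all correct.

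There is, however, one genuine gap: the claim that the marginals of $Poi(r_{\varepsilon},\mu_{\varepsilon})$ converge in $\mathscr{P}_{\mathbb{T}}^{\times}$ ``by 1-D free harmonic analysis.'' The marginal L\'{e}vy measures $(1-\Re s)\,d\tau_{\varepsilon}\circ\pi_{1}^{-1}$ do converge weakly, but by Example 3.6 the rotation parameter of the first marginal is $\gamma_{1}^{(\varepsilon)}=\exp\bigl(-i\int\Im s\,d\tau_{\varepsilon}\bigr)$, and your hypotheses give only $\int(1-\Re s)\,d\tau<\infty$, not $\int|\Im s|\,d\tau<\infty$; since $|\Im s|\asymp\sqrt{1-\Re s}$ near $s=1$, the latter can fail. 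Concretely, take $\tau=\sum_{n}\delta_{(e^{i/n},1)}$, i.e.\ $\rho_{1}=\sum_{n}(1-\cos(1/n))\,\delta_{(e^{i/n},1)}$ and $\rho_{2}=0$ (the compatibility condition holds because $\rho_{1}$ lives on $\{t=1\}$): then $\int\Im s\,d\tau_{\varepsilon}\sim\log(1/\varepsilon)\rightarrow\infty$, so $\gamma_{1}^{(\varepsilon)}$ spirals around $\mathbb{T}$, the first marginals of $Poi(r_{\varepsilon},\mu_{\varepsilon})$ have no weak limit, and Theorem 2.10 cannot be applied to the full family as $\varepsilon\downarrow0$. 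The repair is what the paper does: use compactness of $\mathbb{T}^{2}$ to extract a subsequence $\varepsilon_{m}$ with $Poi(r_{\varepsilon_{m}},\mu_{\varepsilon_{m}})\Rightarrow\nu_{0}$; along it the marginals converge automatically, their limits have nonzero means since $|m(Poi(r_{\varepsilon},\mu_{\varepsilon})^{(1)})|=e^{-\sigma_{1}^{(\varepsilon)}(\mathbb{T})}\geq e^{-\rho_{1}(\mathbb{T}^{2})}>0$, and $\Sigma_{\nu_{0}}(0,0)=\lim\Sigma_{Poi(r_{\varepsilon_{m}},\mu_{\varepsilon_{m}})}(0,0)=\exp(F_{\rho_{1},\rho_{2},a_{0}}(0,0))\neq0$ places $\nu_{0}$ in $\mathscr{P}_{\mathbb{T}^{2}}^{\times}$, after which Theorem 2.10 identifies $\Sigma_{\nu_{0}}$. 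Everything downstream of that point in your argument then goes through. You should also dispose explicitly of the degenerate case $\tau=0$ (so $r_{\varepsilon}=0$ for all $\varepsilon$), where one takes $\nu=N(a)$ directly, as the paper does.
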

\begin{proof}
Denote $\rho=(1-\Re t)\, d\rho_{1}=(1-\Re s)\, d\rho_{2}$ and decompose
the space $\mathbb{T}^{2}$ into the disjoint union $\mathbb{T}^{2}=(\mathbb{T}^{*}\times\mathbb{T}^{*})\cup(\mathbb{T}^{*}\times\{1\})\cup(\{1\}\times\mathbb{T}^{*})\cup\{(1,1)\}$
of Borel sets. Notice that 
\[
\rho((\{1\}\times\mathbb{T}^{*})\cup\{(1,1)\})=\int_{\{1\}\times\mathbb{T}}d\rho=\int_{\{1\}\times\mathbb{T}}(1-\Re s)\, d\rho_{2}(s,t)=0
\]
and 
\[
\rho(\mathbb{T}^{*}\times\{1\})=\int_{\mathbb{T}^{*}\times\{1\}}d\rho=\int_{\mathbb{T}^{*}\times\{1\}}(1-\Re t)\, d\rho_{1}(s,t)=0.
\]
So the measure $\rho$ only charges the set $\mathbb{T}^{*}\times\mathbb{T}^{*}$. 

Let $A_{n}=\{\exp(i\theta):1/n<|\theta|\leq\pi\}$ for $n\geq1$ and
observe that 
\[
\rho_{1}(\{1\}\times A_{n})=\int_{\{1\}\times A_{n}}d\rho_{1}=\int_{\{1\}\times A_{n}}\frac{1-\Re s}{1-\Re t}\, d\rho_{2}(s,t)=0.
\]
This shows that $\rho_{1}(\{1\}\times\mathbb{T}^{*})=\lim_{n\rightarrow\infty}\rho_{1}(\{1\}\times A_{n})=0$,
as well as $\rho_{2}(\mathbb{T}^{*}\times\{1\})=0$ by a similar argument.
Hence the measure $\rho_{1}$ places all its masses on the union $(\mathbb{T}^{*}\times\mathbb{T}^{*})\cup(\mathbb{T}\times\{1\})$,
while $\rho_{2}$ does the same on $(\mathbb{T}^{*}\times\mathbb{T}^{*})\cup(\{1\}\times\mathbb{T})$. 

If $\rho$ is the zero measure, we can further calculate 
\[
\rho_{1}(A_{n}\times A_{n})=\int_{A_{n}\times A_{n}}\frac{d\rho(s,t)}{1-\Re t}=0=\int_{A_{n}\times A_{n}}\frac{d\rho(s,t)}{1-\Re s}=\rho_{2}(A_{n}\times A_{n}),
\]
implying that $\rho_{1}(\mathbb{T}^{*}\times\mathbb{T}^{*})=0=\rho(\mathbb{T}^{*}\times\mathbb{T}^{*})$
after letting $n\rightarrow\infty$. It follows that both $\Im t\, d\rho_{1}$
and $\Im s\, d\rho_{2}$ are also the zero measure on $\mathbb{T}^{2}$,
and as a result, the exponential integral transform $\exp(f\cdot F_{\rho_{1},\rho_{2},a})$
here is simply $\exp(-a\cdot f)$, the $\Sigma$-transform of the
multiplicative bi-free normal law $N(a)$. We take $\nu=N(a)$ in
this case.

In the case of $\rho(\mathbb{T}^{2})=\rho(\mathbb{T}^{*}\times\mathbb{T}^{*})>0$,
the existence of $\nu$ will be shown by Poisson approximation as
in the proof of Theorem 4.2. Thus, we first re-write the given exponential
integral form $\Sigma=\exp(f\cdot F_{\rho_{1},\rho_{2},a})$ into
\begin{eqnarray*}
\Sigma(z,w) & = & \lim_{n\rightarrow\infty}\exp\left(f(z,w)\left\{ \int_{A_{n}\times A_{n}}\frac{1+zs}{1-zs}\frac{1+wt}{1-wt}\, d\rho(s,t)\right.\right.\\
 &  & \left.\left.-i\int_{A_{n}\times A_{n}}\frac{(1+zs)\Im t}{1-zs}\, d\rho_{1}(s,t)-i\int_{A_{n}\times A_{n}}\frac{(1+wt)\Im s}{1-wt}\, d\rho_{2}(s,t)-a\right\} \right)\\
 & = & \lim_{n\rightarrow\infty}\Sigma_{Poi(r_{n},\mu_{n})\boxtimes\boxtimes N(a-a_{n})}(z,w),
\end{eqnarray*}
where the Poisson parameters 
\[
r_{n}=\int_{A_{n}\times A_{n}}\frac{d\rho(s,t)}{(1-\Re s)(1-\Re t)},\quad\mu_{n}=\frac{1}{r_{n}(1-\Re s)(1-\Re t)}\, I_{A_{n}\times A_{n}}(s,t)\, d\rho,
\]
$I_{A_{n}\times A_{n}}$ is the indicator function of the set $A_{n}\times A_{n}$,
and the auxiliary integral 
\[
a_{n}=\int_{\mathbb{T}^{2}}\Im s\Im t\, d[r_{n}\mu_{n}](s,t).
\]
Note that the total mass $r_{n}$ is strictly positive for sufficiently
large $n$. As always, the compactness of $\mathbb{T}^{2}$ yields
a weak probability limit point $\nu$ for the sequence $\nu_{n}=Poi(r_{n},\mu_{n})\boxtimes\boxtimes N(a-a_{n})$.
This limit point $\nu$ is $\boxtimes\boxtimes$-infinitely divisible
and satisfies the desired relation $\Sigma_{\nu}=\exp(f\cdot F_{\rho_{1},\rho_{2},a})$.
Moreover, if $\nu_{n}\Rightarrow\nu$ along a subsequence $A\subset\mathbb{N}$,
we have 
\begin{eqnarray*}
m_{1,1}(\nu)/[m(\nu^{(1)})m(\nu^{(2)})] & = & \lim_{n\rightarrow\infty,\, n\in A}\, m_{1,1}(\nu_{n})/[m(\nu_{n}^{(1)})m(\nu_{n}^{(2)})]\\
 & = & \lim_{n\rightarrow\infty,\, n\in A}\,\Sigma_{\nu_{n}}(0,0)\\
 & = & \Sigma(0,0)=\exp(F_{\rho_{1},\rho_{2},a}(0,0))\neq0,
\end{eqnarray*}
showing that $\nu\in\mathscr{P}_{\mathbb{T}^{2}}^{\times}$.
\end{proof}
Finally, Proposition 2.3 shows that $\text{m}\boxtimes\boxtimes\text{m}=\text{m}$,
and hence the uniform distribution $\text{m}$ is also $\boxtimes\boxtimes$-infinitely
divisible. In order to see that $\text{m}$ can serve as the weak limit
of an infinitesimal array, we end this paper with the following convergence
criteria.
\begin{prop}
Let $\{\mu_{n}\}_{n=1}^{\infty}$ be a sequence in $\mathscr{P}_{\mathbb{T}^{2}}^{\times}$,
and let $\{k_{n}\}_{n=1}^{\infty}$ be an unbounded sequence in $\mathbb{N}$.
The measures $\delta_{\lambda_{n}}\boxtimes\boxtimes\mu_{n}^{\boxtimes\boxtimes k_{n}}$
converge weakly to $\text{\emph{m}}$ for some (and hence for any)
$\lambda_{n}\in\mathbb{T}^{2}$ if and only if 
\[
\begin{cases}
[m_{1,1}(\mu_{n})]^{k_{n}}\rightarrow0;\\
{}[m(\mu_{n}^{(j)})]^{k_{n}}\rightarrow0, & j=1,2.
\end{cases}
\]
\end{prop}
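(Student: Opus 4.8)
The plan is to translate everything into moments and exploit that the weak convergence $\kappa_n:=\delta_{\lambda_n}\boxtimes\boxtimes\mu_n^{\boxtimes\boxtimes k_n}\Rightarrow\mathrm{m}$ is equivalent to $\int_{\mathbb{T}^2}s^pt^q\,d\kappa_n\to0$ for every $(p,q)\neq(0,0)$, since $\mathrm{m}$ has vanishing moments of all nonzero orders. A marginal rotation multiplies each such moment by a unimodular constant, so $|\int s^pt^q\,d\kappa_n|$ does not depend on the choice of $\lambda_n$; this simultaneously disposes of the ``some iff any'' clause and lets me assume $\lambda_n=(1,1)$ throughout. The two governing identities are $m(\kappa_n^{(j)})=[m(\mu_n^{(j)})]^{k_n}$ (first moments are multiplicative under the one-dimensional $\boxtimes$) and $m_{1,1}(\kappa_n)=[m_{1,1}(\mu_n)]^{k_n}$. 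The latter I would obtain by evaluating Corollary 2.9 at $(0,0)$: since $\Sigma_\mu(0,0)=m_{1,1}(\mu)/[m(\mu^{(1)})m(\mu^{(2)})]$ and the marginal first moments are themselves multiplicative, the denominators cancel and $m_{1,1}$ is multiplicative under $\boxtimes\boxtimes$. These two identities give necessity at once: the three displayed limits are exactly $|M_{1,0}(\kappa_n)|,|M_{0,1}(\kappa_n)|,|M_{1,1}(\kappa_n)|\to0$, all forced by $\kappa_n\Rightarrow\mathrm{m}$.

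For sufficiency I would treat the marginal and the genuinely two-dimensional moments separately. The marginal moments $M_{p,0}$, $M_{0,q}$ are controlled by the one-dimensional statement that $\nu_n^{\boxtimes k_n}\Rightarrow\mathrm{m}_{\mathbb{T}}$ (Haar on the circle) whenever $[m(\nu_n)]^{k_n}\to0$ and $k_n\to\infty$. I would prove this from the inversion formula $\eta_{\nu_n^{\boxtimes k_n}}^{-1}(w)=w\,(\eta_{\nu_n}^{-1}(w)/w)^{k_n}$: writing $h_n(w)=w/\eta_{\nu_n}^{-1}(w)$, which is holomorphic with $|h_n|\le1$ and $h_n(0)=m(\nu_n)$, a Schwarz--Pick estimate on a disk $D_\rho$ bounds $|h_n|$ by $b_n=(|m(\nu_n)|+\rho/r)/(1+|m(\nu_n)|\rho/r)$, and an elementary computation shows $b_n^{k_n}\to0$ whenever $|m(\nu_n)|^{k_n}\to0$ and $k_n\to\infty$. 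Hence $|\eta_{\nu_n^{\boxtimes k_n}}(z)|\le\rho\,b_n^{k_n}\to0$ on $D_\rho$, so $\psi_{\nu_n^{\boxtimes k_n}}\to0$ and $\nu_n^{\boxtimes k_n}\Rightarrow\mathrm{m}_{\mathbb{T}}$. The only subtlety is that the radius $r=r(\nu_n)$ on which $\eta_{\nu_n}^{-1}$ lives may shrink; I would secure a fixed disk by splitting into the case $\liminf|m(\nu_n)|>0$ (a quantitative Koebe/distortion bound yields a uniform $r$) and the case $m(\nu_n)\to0$ along a subsequence (where $\eta_{\nu_n}^{-1}$ expands enough that the same conclusion persists).

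For the two-dimensional moments $M_{p,q}(\kappa_n)$ with $p,q\ge1$ I would show $\psi_{\kappa_n}\to0$ locally uniformly on $\mathbb{D}^2$ and then read off the coefficients, the remaining sign patterns following from the conjugation symmetry of $\psi$ and the same argument on the other components. The key is the recovery formula from the proof of Proposition 2.5, $\psi_{\kappa_n}(z,w)=[\psi_{\kappa_n^{(1)}}(z)+\psi_{\kappa_n^{(2)}}(w)+1]\,\Psi_n/(1-\Psi_n)$ with $\Psi_n=\zeta\omega\,\Sigma_{\kappa_n}(\zeta,\omega)$, $\zeta=\eta_{\kappa_n^{(1)}}(z)$, $\omega=\eta_{\kappa_n^{(2)}}(w)$; as the bracket tends to $1$, everything reduces to $\Psi_n\to0$. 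Using $\Sigma_{\kappa_n}=\Sigma_{\mu_n}^{k_n}$, the definition of $\Sigma_{\mu_n}$, and the one-dimensional inversion for each marginal, the scalings produced by the marginal means cancel and one reaches the clean identity
\[
\Psi_n=zw\left(\frac{\psi_{\mu_n}(a,b)}{ab\,H_{\mu_n}(a,b)}\right)^{k_n},\qquad a=\eta_{\mu_n^{(1)}}^{-1}(\zeta),\ b=\eta_{\mu_n^{(2)}}^{-1}(\omega).
\]
Here $a,b\to0$ and the bracketed ratio tends to $m_{1,1}(\mu_n)$, so heuristically $\Psi_n\approx zw\,[m_{1,1}(\mu_n)]^{k_n}\to0$ by the third hypothesis; this cancellation is exactly what makes the statement true. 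I expect this to be the main obstacle: the factor is raised to the power $k_n$, so I cannot pass to the limit inside, and the crude bound $|\psi_{\mu_n}(a,b)/(ab\,H_{\mu_n})|\le|m_{1,1}(\mu_n)|+C(|a|+|b|)$ destroys the cancellation when $\mu_n$ approaches a point mass. To close it rigorously I would retain the relative error, writing the ratio as $m_{1,1}(\mu_n)\,(1+O(|a|+|b|))$, and feed in the marginal inversion estimate of the previous paragraph, which drives $|a|,|b|$ to zero on the scale set by the marginal means fast enough that $(1+O(|a|+|b|))^{k_n}$ stays bounded; a normal-families (Vitali) argument then upgrades the pointwise vanishing of $\psi_{\kappa_n}$ near the origin to the local uniform convergence needed to conclude $M_{p,q}(\kappa_n)\to0$.
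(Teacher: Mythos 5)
Your necessity argument and the disposal of the ``some iff any'' clause are correct and essentially identical to the paper's: both rest on $m_{1,1}(\mu)=\Sigma_{\mu}(0,0)\,m(\mu^{(1)})\,m(\mu^{(2)})$, the multiplicativity of $\Sigma$ at $(0,0)$, and the fact that a marginal rotation only changes moments by unimodular factors. The sufficiency half, however, contains a genuine gap, and it sits exactly where you flag it. Your reduction leads to showing $\bigl(\psi_{\mu_{n}}(a,b)/(ab\,H_{\mu_{n}}(a,b))\bigr)^{k_{n}}\rightarrow0$, where the base equals $m_{1,1}(\mu_{n})+E_{n}$ with $|E_{n}|\leq C(|a|+|b|)$ for a universal constant $C$. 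The fix you propose --- writing the base as $m_{1,1}(\mu_{n})\bigl(1+O(|a|+|b|)\bigr)$ and arguing that $(1+O(|a|+|b|))^{k_{n}}$ stays bounded --- conflates absolute and relative error: the relative error is $O\bigl((|a|+|b|)/|m_{1,1}(\mu_{n})|\bigr)$, so what must be controlled is $k_{n}(|a|+|b|)/|m_{1,1}(\mu_{n})|$. Your marginal estimates give $|a|,|b|$ of the order of $\max_{j}|m(\mu_{n}^{(j)})|^{k_{n}-1}$, and the hypotheses impose no a priori relation between $|m_{1,1}(\mu_{n})|$ and the marginal means; one would need an additional structural inequality (e.g.\ the Cauchy--Schwarz bound $|m_{1,1}|\geq|m^{(1)}m^{(2)}|-\sqrt{(1-|m^{(1)}|^{2})(1-|m^{(2)}|^{2})}$) and a careful case analysis to rule out $k_{n}(|a|+|b|)/|m_{1,1}(\mu_{n})|\rightarrow\infty$. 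None of this is in the proposal, and the crude alternative $(|m_{1,1}|+C(|a|+|b|))^{k_{n}}\leq(2\max(\cdot,\cdot))^{k_{n}}$ fails because $|m_{1,1}|^{k_{n}}\rightarrow0$ does not imply $(2|m_{1,1}|)^{k_{n}}\rightarrow0$. A secondary, unresolved issue of the same flavour is that the domains $D_{r(\mu_{n})}$ on which $\eta_{\mu_{n}^{(j)}}^{-1}$ and $\Sigma_{\mu_{n}}$ live may shrink with $n$ (the $\mu_{n}$ are not assumed infinitesimal), so even setting up the recovery formula uniformly in $n$ requires the case analysis you only sketch for the one-dimensional marginals.

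For contrast, the paper avoids every transform estimate. It takes an arbitrary weak limit point $\nu$ of $\nu_{n}=\delta_{\lambda_{n}}\boxtimes\boxtimes\mu_{n}^{\boxtimes\boxtimes k_{n}}$, splits $k_{n}=\ell_{n}+(k_{n}-\ell_{n})$ with $\ell_{n}=\lfloor k_{n}/2\rfloor$, and notes that the hypotheses pass to both halves (since $|m|^{\ell_{n}}=(|m|^{k_{n}})^{\ell_{n}/k_{n}}\rightarrow0$). Subsequential limits $\rho_{1},\rho_{2}$ of the two halves then satisfy $m(\rho_{i}^{(j)})=0$ and $m_{1,1}(\rho_{i})=0$, and Proposition 2.3 --- a purely combinatorial moment computation --- forces $\nu=\rho_{1}\boxtimes\boxtimes\rho_{2}=\text{m}$. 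You should either adopt that route or supply the missing quantitative estimates; as written, the sufficiency direction is not proved.
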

\begin{proof}
Denote $\nu_{n}=\delta_{\lambda_{n}}\boxtimes\boxtimes\mu_{n}^{\boxtimes\boxtimes k_{n}}$,
we first show the necessity of the moment conditions. Indeed, we have
\begin{eqnarray*}
\left|m_{1,1}(\mu_{n})^{k_{n}}\right| & = & \left|\Sigma_{\mu_{n}}(0,0)^{k_{n}}m(\mu_{n}^{(1)})^{k_{n}}m(\mu_{n}^{(2)})^{k_{n}}\right|\\
 & = & \left|\Sigma_{\nu_{n}}(0,0)m(\nu_{n}^{(1)})m(\nu_{n}^{(2)})\right|=\left|m_{1,1}(\nu_{n})\right|\rightarrow\left|m_{1,1}(\text{m})\right|=0
\end{eqnarray*}
as $n\rightarrow\infty$. The second moment condition follows from
the identity $|m(\mu_{n}^{(j)})^{k_{n}}|=|m(\nu_{n}^{(j)})|$.

Conversely, assume $[m_{1,1}(\mu_{n})]^{k_{n}},\,[m(\mu_{n}^{(j)})]^{k_{n}}\rightarrow0$
for $j=1,2$. Let $\nu$ be any weak limit point of $\{\nu_{n}\}_{n=1}^{\infty}$
in $\mathscr{P}_{\mathbb{T}^{2}}$, and let $\ell_{n}$ be the largest
positive integer that is less than or equal to $k_{n}/2$. Since $\ell_{n}\approx k_{n}/2$
and $k_{n}-\ell_{n}\approx k_{n}/2$ as $n\rightarrow\infty$, we
have 
\begin{equation}
\begin{cases}
[m_{1,1}(\mu_{n})]^{\ell_{n}},\,[m_{1,1}(\mu_{n})]^{k_{n}-\ell_{n}}\rightarrow0;\\
{}[m(\mu_{n}^{(j)})]^{\ell_{n}},\,\,[m(\mu_{n}^{(j)})]^{k_{n}-\ell_{n}}\rightarrow0, & j=1,2.
\end{cases}\label{eq:4.2}
\end{equation}
Now consider the sequences $\delta_{\lambda_{n}}\boxtimes\boxtimes\mu_{n}^{\boxtimes\boxtimes\ell_{n}}$
and $\mu_{n}^{\boxtimes\boxtimes(k_{n}-\ell_{n})}$ and assume, after
passing to convergent subsequences, that they converge weakly to probability
measures $\rho_{1}$ and $\rho_{2}$, respectively. So, we have $\nu=\rho_{1}\boxtimes\boxtimes\rho_{2}$,
where the condition (\ref{eq:4.2}) implies that $m(\rho_{1}^{(j)})=0=m(\rho_{2}^{(j)})$
for $j=1,2$ and that $m_{1,1}(\rho_{1})=0=m_{1,1}(\rho_{2})$. This
means $\nu=\text{m}$ by Proposition 2.3. Since $\{\nu_{n}\}_{n=1}^{\infty}$
has only one weak limit point $\text{m}$, it follows that $\nu_{n}\Rightarrow\text{m}$.\end{proof}

\subsection*{Acknowledgement}The first author was supported through a grant from the Ministry of Science and Technology in Taiwan and a faculty startup grant from the National Sun Yat-sen University. The second author was supported by the NSERC Canada Discovery Grant RGPIN-2016-03796.


\begin{thebibliography}{10}
\bibitem{BerSerban}S. T. Belinschi and H. Bercovici, \emph{Hin\v{c}in's
theorem for multiplicative free convolution}, Canadian Math. Bulletin,
\textbf{51} (2008), no. 1, 26-31. 

\bibitem{BerVoi}H. Bercovici and D. V. Voiculescu, \emph{L\'{e}vy-Hin\v{c}in
type theorems for multiplicative and additive free convolution}, Pacific
J. Math. \textbf{153} (1992), no. 2, 217-248.

\bibitem{BerJC}H. Bercovici and J.-C. Wang, \emph{Limit theorems
for free multiplicative convolutions}, Trans. Amer. Math. Soc. \textbf{360}
(2008), no. 11, 6089-6102.

\bibitem{GHM}Y. Gu, H.-W. Huang, and J. A. Mingo, \emph{An analogue
of the L\'{e}vy-Hin\v{c}in formula for bi-free infinitely divisible
distributions}, Indiana Univ. Math. J. \textbf{65} (2016), no. 5,
1795-1831.

\bibitem{Haagerup}U. Haagerup, \emph{On Voiculescu's $R$- and $S$-transforms
for free non-commuting variables}, Free Probability Theory, Fields
Institute Communications, vol. \textbf{12}, AMS, 1997, pp. 127-148.

\bibitem{HW}H.-W. Huang and J.-C. Wang, \emph{Analytic aspects of
the bi-free partial $R$-transform}, J. Funct. Anal. \textbf{271}
(2016), no. 4, 922-957.

\bibitem{HHW}H.-W. Huang, T. Hasebe, and J.-C. Wang, \emph{Limit
theorems for bi-free convolution}, preprint (2016).

\bibitem{Rudin}W. Rudin, Function theory in polydiscs, W. A. Benjamin,
Inc., New York-Amsterdam, 1969.

\bibitem{Skoufranis}P. Skoufranis, \emph{A combinatorial approach
to Voiculescu's bi-free partial transforms}, Pacific J. Math. \textbf{283}
(2016), no. 2, 419-447. 

\bibitem{VoiMulFree}D. V. Voiculescu, \emph{Multiplication of certain
noncommuting random variables}, J. Operator Theory \textbf{18} (1987),
223-235.

\bibitem{part1}---------, \emph{Free probability for pairs of faces
}I, Comm. Math. Phys. \textbf{332} (2014), 955-980.

\bibitem{part2}---------, \emph{Free probability for pairs of faces}
II: \emph{$2$-variables bi-free partial $R$-transform and systems
with rank $\leq1$ commutation}, Ann. Inst. Henri Poincare Probab.
Stat. \textbf{52} (2016), 1-15.

\bibitem{part3}---------, \emph{Free probability for pairs of faces}
III: \emph{$2$-variables bi-free partial $S$- and $T$-transforms},
J. Funct. Anal. \textbf{270} (2016), 3623-3638.\end{thebibliography}
\end{document}